\newcommand{\excise}[1]{}
\newtheorem{thm}{Theorem}[section]
\newtheorem{lemma}[thm]{Lemma}
\newtheorem{cor}[thm]{Corollary}
\newtheorem{prop}[thm]{Proposition}
\newtheorem{conj}[thm]{Conjecture}
\theoremstyle{definition}
\newtheorem{example}[thm]{Example}
\newtheorem{remark}[thm]{Remark}
\newtheorem{defn}[thm]{Definition}
\newtheorem{notation}[thm]{Notation}
\numberwithin{equation}{section}
\newcommand{\ring}[1]{\ensuremath{\mathbb{#1}}}
\renewcommand\>{\rangle}
\newcommand\<{\langle}
\newcommand\CC{\ring{C}}
\newcommand\QQ{\ring{Q}}
\newcommand\RR{\ring{R}}
\newcommand\ZZ{\ring{Z}}
\DeclareMathOperator\Betti{Betti} 
\DeclareMathOperator\Cong{Cong} 
\DeclareMathOperator\Ap{Ap} 
\begin{document}

\mbox{}
\title{On parametrized families of numerical semigroups}

\author[F.~Kerstetter]{Franklin Kerstetter}
\address{Mathematics Department\\University of California Davis\\Davis, CA 95616}
\email{fjkerstetter@ucdavis.edu}

\author[C.~O'Neill]{Christopher O'Neill}
\address{Mathematics Department\\San Diego State University\\San Diego, CA 92182}
\email{cdoneill@sdsu.edu}

\subjclass[2010]{Primary: 20M14, 05E40.}
\keywords{numerical semigroup; Betti number; Frobenius number; quasipolynomial}

\date{\today}

\begin{abstract}
A numerical semigroup is an additive subsemigroup of the non-negative integers.  In this paper, we consider parametrized families of numerical semigroups of the form $P_n = \<f_1(n), \ldots, f_k(n)\>$ for polynomial functions $f_i$.  We conjecture that for large $n$, the Betti numbers, Frobenius number, genus, and type of $P_n$ each coincide with a quasipolynomial.  This conjecture has already been proven in general for Frobenius numbers, and for the remaining quantities in the special case when $P_n = \<n, n + r_2, \ldots, n + r_k\>$.  Our main result is to prove our conjecture in the case where each $f_i$ is linear.  In the process, we develop the notion of weighted factorization length, and generalize several known results for standard factorization lengths and delta sets to this weighted setting.  
\end{abstract}

\maketitle


\section{Introduction}
\label{sec:intro}

A numerical semigroup $S$ is an additively closed subset of $\ZZ_{\ge 0}$, usually specified using a generating set $r_1, \ldots, r_k$, i.e.,
$$S = \<r_1, \ldots, r_k\> = \{z_1r_1 + z_2r_2 + \cdots + z_kr_k \mid z_1, \ldots, z_k \in \ZZ_{\ge 0}\}.$$
Many classical problems surrounding numerical semigroups involve arithmetic invariants, such as the Frobenius number $\mathsf F(S)$, genus $\mathsf g(S)$, type $\mathsf t(S)$, and delta set $\Delta(S)$, each of which is difficult to compute when the generators of $S$ are large.  
For a thorough introduction to numerical semigroups, see~\cite{numerical}.  

This paper considers parametrized families of numerical semigroups of the form
$$P_n = \<f_1(n), \ldots, f_k(n)\>$$
for some functions $f_1(n), \ldots, f_k(n)$.  Such families have arisen in two main settings in the last decade.  First is the \emph{parametric Frobenius problem}, which asks under what conditions the function $n \mapsto \mathsf F(P_n)$ coincides with a quasipolynomial (that is, a polynomial with periodic coefficients) for large $n$.  It was conjectured in~\cite{rouneparametricfrob} that this holds whenever the functions $f_i$ are themselves polynomials, where this was proven in the case where $\deg f_i = 1$ for all $i$, as well as in the case $k = 3$.  This appears to have been proven in general~\cite{shenparametricfrob}, though the results have yet to appear outside the \texttt{arXiv}, and the authors of this manuscript have been unable to contact the author.  

Separately, \emph{shifted} numerical semigroups, which have a specialized parametrization 
$$M_n = \<n, n + r_2, \ldots, n + r_k\>$$
for positive integers $r_2, \ldots, r_k$, have been examined in numerous recent papers.  It~is known that the delta set of $M_n$ is eventually periodic~\cite{shiftydelta}, and that the Frobenius number, genus, and type of $M_n$ are each eventually quasipolynomial~\cite{shiftedaperysets}.  Additionally, the minimal relations between the generators of $M_n$, usually studied in the form of minimal presentations \cite{numerical} or syzygies of the defining toric ideal \cite{cca}, are known to satisfy a certain periodicity originally conjectured by Herzog and Srinivasan and proven by Vu~\cite{vu14}.  These results were later improved in~\cite{shiftyminpres}, wherein several consequences for other semigroup invariants were also derived, and further specialized in~\cite{shiftedtangentcone,shifted3gen}.  

The results mentioned above provide ample evidence of a more general phenomenon, which we now conjecture formally.  

\begin{conj}\label{conj:main}
If $f_1, \ldots, f_k:\ZZ \to \ZZ$ are eventually increasing polynomials and
$$P_n = \<f_1(n), \ldots, f_k(n)\>,$$
then $\Betti(P_n)$ is eventually quasipolynomial in $n$.  As a consequence, the Frobenius number, genus, and type of $P_n$ are each eventually quasipolynomial in $n$.  
\end{conj}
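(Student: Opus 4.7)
The plan is to reduce the conjecture to a structural statement about the Apéry set of $P_n$ with respect to its smallest generator. Write $m(n) = \min_i f_i(n)$, which is eventually polynomial in $n$ once the $f_i$ are all increasing. Every invariant named in the conjecture---the multigraded Betti numbers as well as the Frobenius number, genus, and type---can be extracted from $\Ap(P_n, m(n))$ together with the simplicial complexes $\Delta_s = \{F \subseteq [k] : s - \textstyle\sum_{i \in F} f_i(n) \in P_n\}$ attached to each Betti element $s$. So it suffices to show that the $m(n)$ Apéry elements can be enumerated as $w_1(n), \ldots, w_{m(n)}(n)$ with each $w_j(n)$ a quasipolynomial function of $n$ (within an eventually quasipolynomial stratification of the residues modulo $m(n)$), and that the homotopy type of $\Delta_{s(n)}$ is eventually constant in $n$ along each family of Betti elements.

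My first step would be to extend the weighted factorization length machinery of this paper from linear to polynomial weights. For $d = \max_i \deg f_i$, write $f_i(n) = c_i n^d + (\text{lower order})$ and analyze factorizations by their top-degree contribution; the goal is to show that each Apéry element $w_j(n)$ admits a factorization $w_j(n) = \sum_i a_{ij}(n) f_i(n)$ in which the coefficient vector $(a_{ij}(n))_i$ is eventually quasipolynomial and the combinatorial type of the factorization (which generators appear and which relations among them are invoked) is eventually constant in $n$. Second, I would show that the residues modulo $m(n)$ decompose quasipolynomially into finitely many strata so that within each stratum the stabilized factorizations align, and then invoke Koszul- or Taylor-complex arguments to transfer combinatorial stability of the $\Delta_{s(n)}$ into quasipolynomiality of the multigraded Betti numbers. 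The classical invariants follow from standard formulas in terms of the Apéry set.

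The main obstacle is the interaction between generators of different polynomial degrees. In the linear case treated here, every $f_i$ contributes at the same asymptotic scale, which is precisely why shifted-semigroup techniques (\cite{shiftyminpres, vu14}) can eventually be brought to bear after a suitable normalization of the constant terms. When the degrees of the $f_i$ differ, generators of lower degree become asymptotically negligible and act as ``small perturbations'' of the top-degree subsemigroup; it is not clear a priori that the Apéry structure remains combinatorially tame under such perturbations, since the thresholds at which low-degree generators start to matter depend delicately on the coefficients. Overcoming this will likely require a multi-scale induction: treat the top-degree generators as their own parametric family, then successively patch in the lower-degree contributions via a parametric deformation argument, propagating quasipolynomiality at each stage.
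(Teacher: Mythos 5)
This is a conjecture, not a theorem, and the paper does not prove it in full generality; the paper only proves it in the case where every $f_i$ is linear (Sections~3--6), and Remark~\ref{r:conjproof} notes that the ``eventually quasipolynomial'' claims were subsequently established nonconstructively via parametric Presburger arithmetic while the informal ``consequence'' structure remains open. So there is no paper proof to align your proposal against, and your proposal should be read as a research program rather than a proof.

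Within the linear regime, your plan is consistent in spirit with the paper's actual method: the paper reparametrizes so the first generator becomes the parameter, maps each factorization of $\beta \in P_n$ with weighted length $\ell$ to a factorization of $\beta - \ell m$ in a fixed auxiliary semigroup $T$, establishes Theorem~\ref{t:mesalemma} about Betti elements from this, and then derives the Apéry/Frobenius/genus/type statements from Theorem~\ref{t:parametrizedapery} and Proposition~\ref{p:largeapery}. Your observation that all invariants can be read off $\Ap(P_n, m(n))$ together with the squarefree divisor complexes $\Delta_s$ is sound and is indeed how one would organize a constructive proof.

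However, there are two substantive gaps in your plan for the nonlinear case. First, the assertion that the $m(n)$ Apéry elements can be enumerated as quasipolynomial functions within an eventually quasipolynomial stratification of residues modulo $m(n)$ is not a reduction of the problem but a restatement of it: this is precisely what one must prove, and proving it constructively is the hard part that the Presburger approach sidesteps nonconstructively. Second, and more critically, your proposed ``multi-scale induction'' on degree does not yet have a mechanism. In the linear case, normalizing by $f_1(n)$ produces an auxiliary semigroup $T$ whose generators are \emph{constants} (possibly rational but $n$-independent), and the entire argument rests on being able to apply Lemma~\ref{l:maxminquasi} and Corollary~\ref{c:maxminquasi} inside this fixed $T$ once $n$ clears a threshold. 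If some $f_i$ has higher degree than $f_1$, the analogue of $t_i = f_i(n) - (f_i/f_1)\,f_1(n)$ is no longer constant; the auxiliary object is itself an $n$-dependent semigroup of unbounded generators, so there is no fixed threshold beyond which ``the arithmetic stabilizes,'' and the reduction collapses. You correctly flag this obstacle, but the proposal offers only the hope that a ``parametric deformation argument'' would propagate quasipolynomiality, without identifying what invariant would be preserved or what finiteness statement would make the induction terminate. Until that mechanism is supplied, the proposal remains a plausible outline rather than a proof.
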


Note that the word ``consequence'' in Conjecture~\ref{conj:main} is intended as an informal~claim.  In~particular, the main results of~\cite{shiftyminpres,shiftedaperysets} for shifted numerical semigroups (where the conjecture is already proven) stem from a single underlying result (\cite[Theorem~3.4]{shiftyminpres}) regarding the Betti elements of $P_n$ (that is, elements whose factorizations encode the minimal relations between the generators of $P_n$).  Conjecture~\ref{conj:main} claims this core behavior occurs more generally, and that the remaining claims follow as consequences.  

\begin{remark}\label{r:conjproof}
After posting this manuscript, a proof of the ``eventually quasipolynomial'' claims in Conjecture~\ref{conj:main} appeared elsewhere on the arXiv~\cite{parametricpresburgerbigthm}.  The results therein are broad, with most claims extended to parametrized families of affine semigroups, but the proofs are nonconstructive, relying on formal logic and Presburger arithmetic.  As~such, the informal ``consequence'' claim discussed above remains open.  
\end{remark}

In this paper, we prove Conjecture~\ref{conj:main} in the case where the functions $f_1(n), \ldots, f_k(n)$ are linear.  The main results are in Sections~\ref{sec:minpres} and~\ref{sec:aperysets}, which generalize results for shifted numerical semigroups that appeared in~\cite{shiftyminpres} and~\cite{shiftedaperysets}, respectively.  The results in those sections follow from a central result about Betti elements for large $n$ (Theorem~\ref{t:mesalemma}), providing the ``consequently'' part of Conjecture~\ref{conj:main}.  As a necessary step in stating our main results, we develop the notion of ``weighted factorization length'' in Section~\ref{sec:weightedlengths}, and generalize several known results involving standard factorization length.  As evidence of the generality in Conjecture~\ref{conj:main}, we close this paper with Example~\ref{e:nonlinear}, a non-linear example where Conjecture~\ref{conj:main} appears to hold.  

\subsection*{Acknowledgements}
The authors would like to thank Scott Chapman and Pedro Garc\'ia-S\'anchez for their helpful comments and suggestions.

\section{Numerical semigroups and factorization length}
\label{sec:background}

In this section, we state some background definitions for factorizations of numerical semigroup elements; the books~\cite{nonuniq} and~\cite{numerical} contain thorough introductions to nonunique factorization and numerical semigroups, respectively.  Several of the quantities in Definition~\ref{d:numerical} involving (unweighted) factorization length have a weighted generalization introduced in subsequent sections of this paper.

\begin{defn}\label{d:numerical}
A \emph{numerical semigroup} $S$ is an additive subsemigroup of $\ZZ_{\ge 0}$ (note, we do \textbf{not} require $S$ to have finite complement).  We write 
$$S = \<r_1, \ldots, r_k\> = \{z_1r_1 + \cdots + z_kr_k : z_1, \ldots, z_k \in \ZZ_{\ge 0}\}$$
for the semigroup generated by $r_1, \ldots, r_k$.  
A \emph{factorization} of $n \in S$ is an expression 
$$n = z_1r_1 + \cdots + z_kr_k$$
of $n$ as a sum of generators of $S$, and the \emph{length} of a factorization is the sum $z_1 + \cdots + z_k$.  The \emph{set of factorizations} of $n$ is the set
$$\mathsf Z_S(n) = \{z \in \ZZ_{\ge 0}^k : n = z_1r_1 + \cdots + z_kr_k\}$$
viewed as a subset of $\ZZ_{\ge 0}^k$, and the \emph{length set} of $n$ is the set 
$$\mathsf L_S(n) = \{z_1 + \cdots + z_k : z \in \mathsf Z_S(n)\},$$
of all possible factorization lengths of $n$.  Writing $\mathsf L_S(n) = \{\ell_1 < \cdots < \ell_m\}$, define 
$$\Delta_S(n) = \{\ell_i - \ell_{i-1} : 2 \le i \le m\} \qquad \text{and} \qquad \Delta(S) = \bigcup_{n \in S} \Delta_S(n)$$
as the \emph{delta sets} of $n$ and $S$, respectively.  
The \emph{maximum} and \emph{minimum} factorization length functions are defined as 
$$\mathsf M_S(n) = \max \mathsf L_S(n) \qquad \text{ and } \qquad \mathsf m_S(n) = \min \mathsf L_S(n),$$
respectively.  
\end{defn}

We state two results from the literature (Theorems~\ref{t:deltaset} and~\ref{t:maxminorig}) that we will generalize in the next section.  The first result depends on the following definition.  

\begin{defn}\label{d:bettielement}
Given a numerical semigroup $S$ and an element $n \in S$, the \textit{factorization graph} of $n$, denoted $\nabla_n$, has vertex set $\mathsf Z(n)$, and two vertices $z,z' \in \mathsf Z(n)$ are connected by an edge whenever they have at least one generator in common.  We say $n$ is a \emph{Betti element} of $S$ if $\nabla_n$ is disconnected.  Define
$$\Betti(S) = \{n \in S \mid n \text{ is a Betti element of } S\}.$$
\end{defn}

\begin{example}\label{e:bettielements}
The Betti elements of $S = \<6, 9, 20\>$ are $\Betti(S) = \{18,60\}$, whose factorization graphs are depicted in Figure~\ref{f:bettielements}.  As we will see in Section~\ref{sec:minpres}, these elements encode the minimal relations between the generators of $S$:\ $18$ is the smallest element that can be factored using $6$ and $9$, and $60$ is the smallest element that can be factored using $6$ and $9$ and separately using $20$.  
\end{example}

\begin{figure}
\begin{center}
\includegraphics[height=1.2in]{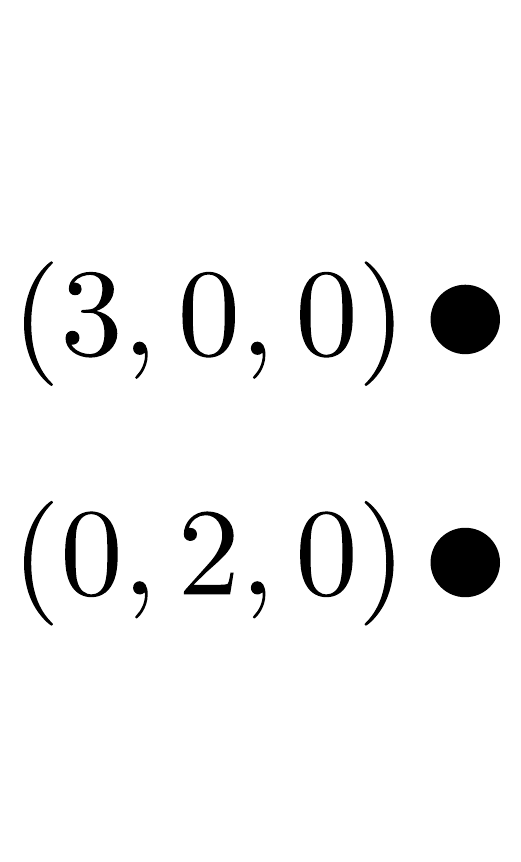}
\hspace{1.0in}
\includegraphics[height=1.2in]{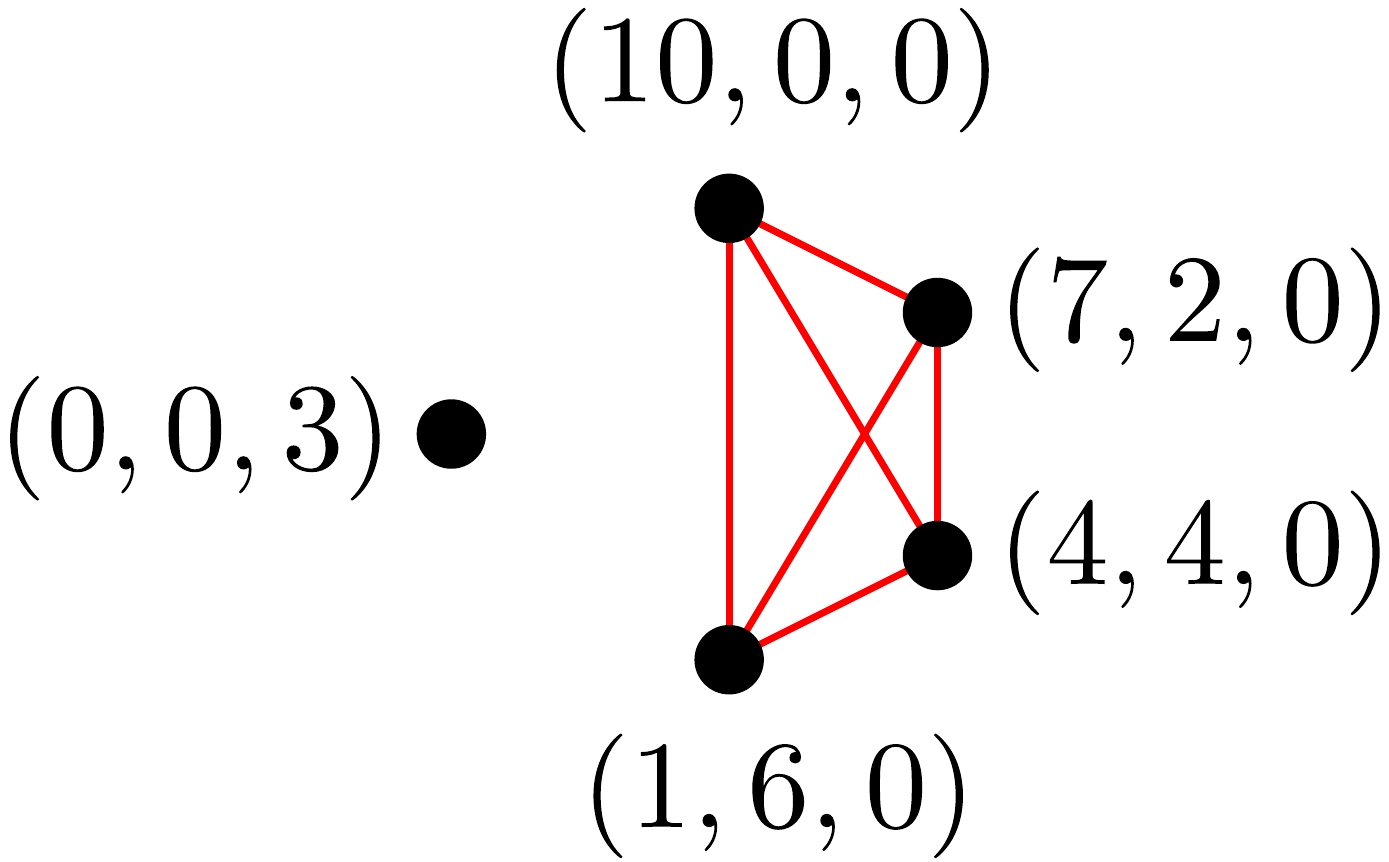}
\end{center}
\caption{The factorization graphs $\nabla_{18}$ (left) and $\nabla_{60}$ (right) in the numerical semigroup $S = \<6, 9, 20\>$ from Example~\ref{e:bettielements}.}
\label{f:bettielements}
\end{figure}

\begin{remark}\label{r:bettinumbers}
From a commutative algebra viewpoint, Betti elements coincide with graded degrees of the minimal generators of toric ideals.  Given a numerical semigroup $S = \<r_1, \ldots, r_k\>$, the kernel $I = \ker\varphi$ of the ring homomorphism determined by
$$\begin{array}{r@{}c@{}l}
\varphi:\CC[x_1, \ldots, x_k] &{}\to{}& \CC[y] \\
x_i &{}\mapsto{}& y^{r_i}
\end{array}$$
is the defining toric ideal of $S$.  As an example, if $S = \<6, 9, 20\>$, then the defining toric ideal $I \subset \CC[x,y,z]$ has precisely 4 minimal generating sets, namely
$$\{x^3 - y^2, x^{10} - z^3\}, \, \{x^3 - y^2, x^7y^2 - z^3\}, \,
\{x^3 - y^2, x^4y^4 - z^3\}, \, \text{and} \, \{x^3 - y^2, xy^6 - z^3\},$$
each of which has one homogeneous element of degree $18$ and one of degree $60$ (here, the graded degree of each monomial is determined by its image under $\varphi$).  This matches the Betti elements $\Betti(S) = \{18, 60\}$ obtained in Example~\ref{e:bettielements}.  
\end{remark}

\begin{thm}[{\cite{deltasets,bfdelta}}]\label{t:deltaset}
For any numerical semigroup $S = \<r_1, \ldots, r_k\>$, the set $\Delta(S)$ is~nonempty and finite, and $\gcd \Delta(S) = \min \Delta(S)$.  Moreover,
$$\min \Delta(S) = \gcd\{r_i - r_{i-1} : 2 \le i \le k\}$$
and
$$\max \Delta(S) = \max_{n \in \Betti(S)} \max \Delta_S(n).$$
\end{thm}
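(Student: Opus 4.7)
The argument splits along the four claims: nonemptiness and finiteness of $\Delta(S)$, the identity $\gcd\Delta(S)=\min\Delta(S)$, the formula for $\min\Delta(S)$, and the Betti-element characterization of $\max\Delta(S)$. I assume throughout that $k\ge 2$, $r_1<r_2<\cdots<r_k$, and (as implicit in the classical statement) $\gcd(r_1,\ldots,r_k)=1$.

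For nonemptiness, observe that $n=r_1r_k$ has factorizations of lengths $r_1$ and $r_k$, so $\Delta_S(n)\ne\varnothing$. For finiteness of $\Delta(S)$ together with $\gcd\Delta(S)=\min\Delta(S)$, I would invoke the structure theorem for sets of lengths \cite{nonuniq}: for all sufficiently large $n\in S$, the set $\mathsf L_S(n)$ is an arithmetic progression of common difference $\min\Delta(S)$ up to bounded initial and final segments. Hence $\Delta_S(n)=\{\min\Delta(S)\}$ for large $n$, so only finitely many small $n$ can contribute further values, and every $\delta\in\Delta(S)$ is a multiple of $\min\Delta(S)$, proving both claims simultaneously.

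For $\min\Delta(S)=d$ with $d:=\gcd\{r_i-r_{i-1}:2\le i\le k\}$, I would show both divisibilities. Since $d$ divides every $r_i-r_j$, any two factorizations $z,z'\in\mathsf Z_S(n)$ of the same element satisfy $r_1(|z|-|z'|)\equiv 0\pmod d$; combined with $\gcd(r_1,d)=1$ (any prime dividing both $r_1$ and $d$ would divide every $r_i$, contradicting $\gcd(r_1,\ldots,r_k)=1$), this forces $d\mid |z|-|z'|$, giving $d\mid\min\Delta(S)$. Conversely, for each $2\le i\le k$, the element $r_{i-1}r_i$ admits the two factorizations $r_i\cdot r_{i-1}$ and $r_{i-1}\cdot r_i$ of respective lengths $r_i$ and $r_{i-1}$, so $r_i-r_{i-1}$ is a difference of two elements of $\mathsf L_S(r_{i-1}r_i)$ and hence a multiple of $\gcd\Delta(S)=\min\Delta(S)$; taking gcds in $i$ gives $\min\Delta(S)\mid d$.

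The main obstacle is the final claim. The inequality $\max_{n\in\Betti(S)}\max\Delta_S(n)\le\max\Delta(S)$ is immediate, so the task is the reverse. Let $\delta=\max\Delta(S)$ be realized by consecutive lengths $\ell,\ell+\delta$ in some $\mathsf L_S(n)$, and choose $n$ minimal with this property. I would show $n\in\Betti(S)$ by contradiction: if $\nabla_n$ were connected, then between a length-$\ell$ factorization and a length-$(\ell+\delta)$ factorization of $n$ there exists a path in $\nabla_n$ along which every intermediate factorization has length in $\{\ell,\ell+\delta\}$ (by consecutiveness in $\mathsf L_S(n)$). Some adjacent pair of factorizations $v,v'$ on this path then has $|v|=\ell$, $|v'|=\ell+\delta$, and shares some generator $r_j$. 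Subtracting $e_j$ from both produces factorizations of $n-r_j$ differing in length by $\delta$, and any intermediate length in $\mathsf L_S(n-r_j)$ would lift (by adding $r_j$) to an intermediate length in $\mathsf L_S(n)$, a contradiction. Hence $\delta\in\Delta_S(n-r_j)$, violating the minimality of $n$; so $n\in\Betti(S)$ with $\delta\in\Delta_S(n)$, as desired.
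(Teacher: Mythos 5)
The paper cites this theorem from \cite{deltasets,bfdelta} without reproducing a proof; the closest thing in the paper is its weighted generalization (Theorem~\ref{t:weighteddelta}), whose part~(c) is proved by invoking a chain lemma from \cite[Lemma~2.1]{bfdelta} rather than arguing directly with $\nabla_n$ as you do. So your route for the Betti-element identity is genuinely more elementary: you package the ``chains pass through Betti relations'' idea as an induction on $n$ instead of citing the lemma. However, the key step has a real gap. You assert that along a path in $\nabla_n$ from a length-$\ell$ factorization to a length-$(\ell+\delta)$ factorization, ``every intermediate factorization has length in $\{\ell,\ell+\delta\}$ (by consecutiveness in $\mathsf L_S(n)$).'' This is false: consecutiveness only rules out lengths strictly between $\ell$ and $\ell+\delta$, but the vertices of $\nabla_n$ are \emph{all} of $\mathsf Z_S(n)$, and a path may freely traverse factorizations of length $<\ell$ or $>\ell+\delta$. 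Consequently you cannot extract an adjacent pair with lengths exactly $\ell$ and $\ell+\delta$. The fix is short: since the lengths along the path start at $\ell$ and end at $\ell+\delta$ and never lie in $(\ell,\ell+\delta)$, some adjacent pair $v,v'$ satisfies $|v|\le\ell$ and $|v'|\ge\ell+\delta$; they share a generator $r_j$; then $\mathsf L_S(n-r_j)$ contains $|v|-1\le\ell-1$ and $|v'|-1\ge\ell+\delta-1$, is disjoint from $(\ell-1,\ell+\delta-1)$ because $\mathsf L_S(n-r_j)+1\subseteq\mathsf L_S(n)$, and therefore has two consecutive elements differing by at least $\delta$ --- hence by exactly $\delta$ by maximality of $\delta$. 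This gives $\delta\in\Delta_S(n-r_j)$ and contradicts minimality of $n$ just as you intended.

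On the earlier claims: your nonemptiness observation and the two-sided divisibility argument for $\min\Delta(S)=\gcd\{r_i-r_{i-1}\}$ are both correct. But the appeal to the structure theorem for $\gcd\Delta(S)=\min\Delta(S)$ is both heavier than needed and not quite justified as written: the structure theorem for sets of lengths makes $\mathsf L_S(n)$ an almost arithmetical multiprogression with difference $d$ for large $n$, but it does not by itself force the bounded initial and final segments to sit in $d\ZZ$, so it does not directly yield ``$\Delta_S(n)=\{\min\Delta(S)\}$ for large $n$,'' nor does it control the deltas coming from small $n$. In fact you do not need it: your own divisibility argument in the $\min\Delta(S)$ paragraph already shows that $d=\gcd\{r_i-r_{i-1}\}$ divides \emph{every} difference of lengths of a fixed element, hence $d$ divides every element of $\Delta(S)$; combined with $\min\Delta(S)\mid d$ this gives $d\mid\gcd\Delta(S)\le\min\Delta(S)\mid d$, so all three are equal, and finiteness then follows from the (corrected) Betti bound $\max\Delta(S)=\max_{\beta\in\Betti(S)}\max\Delta_S(\beta)$, since $\Betti(S)$ is finite and each $\Delta_S(\beta)$ is finite. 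This mirrors the direct argument the paper uses for the weighted version in Theorem~\ref{t:weighteddelta}(a), and is preferable to invoking the structure theorem here.
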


\begin{thm}[{\cite{elastsets,shiftyminpres}}]\label{t:maxminorig}
For $n > r_k^2$ in a numerical semigroup $S = \<r_1, \ldots, r_k\>$, we have
$$\mathsf M(n + r_1) = \mathsf M(n) + 1 \qquad \text{and} \qquad \mathsf m(n + r_k) = \mathsf m(n) + 1.$$
\end{thm}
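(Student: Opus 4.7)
The plan is to prove $\mathsf M(n + r_1) = \mathsf M(n) + 1$; the identity $\mathsf m(n + r_k) = \mathsf m(n) + 1$ will follow by a symmetric argument in which $r_k$ replaces $r_1$ and length maximization is replaced by length minimization. The inequality $\mathsf M(n + r_1) \ge \mathsf M(n) + 1$ is immediate: prepending one copy of $r_1$ to a max-length factorization of $n$ produces a factorization of $n + r_1$ of length $\mathsf M(n) + 1$. The reverse inequality is the content.

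The key tool will be the identity
\[
|z| \;=\; \frac{n}{r_1} \;-\; \frac{1}{r_1} \sum_{j \ge 2} z_j (r_j - r_1) \qquad (z \in \mathsf Z(n)),
\]
obtained by solving for $z_1$ in $n = \sum_j z_j r_j$. Consequently, maximizing $|z|$ is equivalent to minimizing the non-negative quantity $\sum_{j \ge 2} z_j (r_j - r_1)$ over tuples $(z_2, \ldots, z_k) \in \ZZ_{\ge 0}^{k-1}$ subject to $\sum_{j \ge 2} z_j r_j \equiv n \pmod{r_1}$ and $\sum_{j \ge 2} z_j r_j \le n$. For each residue $i \in \{0, 1, \ldots, r_1 - 1\}$ attained by an element of $S$, I will fix an unconstrained minimizer $(z_2^{(i)}, \ldots, z_k^{(i)})$, with value $f(i)$ and witness sum $W_i = \sum_{j \ge 2} z_j^{(i)} r_j$. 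As soon as $n \ge W_i$, the $\le n$ constraint is inactive at the minimum, yielding the closed form $\mathsf M(n) = (n - f(i))/r_1$ for $i = n \bmod r_1$. Since $(n + r_1) \bmod r_1 = i$, applying this formula to both $n$ and $n + r_1$ immediately gives $\mathsf M(n + r_1) = \mathsf M(n) + 1$.

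The main obstacle, and the point where the hypothesis $n > r_k^2$ enters, is to verify that $W_i \le r_k^2$ for every such $i$. Two observations combine to this end. First, an exchange argument bounds the minimizer componentwise: any minimizer must satisfy $z_j^{(i)} < r_{j'}/\gcd(r_j, r_{j'}) \le r_{j'}$ for every pair $2 \le j' < j$, since otherwise trading $r_{j'}/\gcd(r_j, r_{j'})$ copies of $r_j$ for $r_j/\gcd(r_j, r_{j'})$ copies of $r_{j'}$ would strictly decrease the objective while preserving the congruence. Second, the inclusion $\langle r_1, r_k \rangle \subseteq S$ supplies, for each attained residue class, a candidate factorization using only $r_k$ with coefficient at most $r_1 - 1$, giving an Apéry-style upper bound $W_i \le (r_1 - 1) r_k < r_k^2$ via a further exchange that converts any excess in the minimizer's support back into copies of $r_k$. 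Together these close the argument for $n > r_k^2$, and the dual reasoning (working modulo $r_k$, minimizing $\sum_{j \le k-1} z_j(r_k - r_j)$, and exchanging toward $r_k$) completes the proof of the $\mathsf m$-identity.
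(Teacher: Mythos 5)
Your closed-form route is genuinely different from the argument the paper gives for the weighted generalization (Theorem~\ref{t:maxminquasi}, following \cite{elastsets}), which shows via the pigeonhole Lemma~\ref{l:oldlemma41} and the structural Lemma~\ref{l:maxminquasi} that for $n > r_k^2$ some max-length factorization of $n$ has positive first coordinate, and then peels off one copy of $r_1$. Your setup is sound as far as it goes: the length identity, the reduction to minimizing $\sum_{j\ge 2} z_j(r_j - r_1)$ over the congruence class mod $r_1$, the conclusion once the constraint $\sum_{j\ge 2} z_j r_j \le n$ is slack, and your first exchange observation (trading $r_j$ for $r_{j'}$ with $2 \le j' < j$ is value- and residue-preserving and strictly decreases the objective) are all correct.

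The gap is the bound $W_i \le (r_1 - 1)r_k$. A tuple $z_k e_k$ with $z_k r_k \equiv i \pmod{r_1}$ need not exist: multiples of $r_k$ only hit the residues in $\gcd(r_1, r_k)\ZZ/r_1\ZZ$, and $S$ can attain others, e.g.\ $S = \langle 4, 6, 9, 10\rangle$ attains $i = 1 = 9 \bmod 4$ but no multiple of $10$ is $1 \bmod 4$. Moreover the proposed ``exchange toward $r_k$'' moves the objective the wrong way: replacing $r_k/\gcd(r_j,r_k)$ copies of $r_j$ by $r_j/\gcd(r_j,r_k)$ copies of $r_k$ changes $\sum_{j\ge 2} z_j(r_j - r_1)$ by $+\,r_1(r_k - r_j)/\gcd(r_j,r_k) \ge 0$, so it cannot carry a minimizer to an $r_k$-only tuple; and even when such a tuple exists it is merely feasible, not an $f$-minimizer, so its $W$-value does not automatically bound $W_i$. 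Note also that your first observation leaves $z_2^{(i)}$ completely unbounded (there is no index $j'$ with $2 \le j' < 2$). The bound you need, $W_i < r_1 r_k \le r_k^2$, comes exactly from Lemma~\ref{l:oldlemma41}: among all $f$-minimizers for residue $i$, take one of smallest total length $\sum_{j\ge 2} z_j^{(i)}$; if that length were $\ge r_1$, Lemma~\ref{l:oldlemma41} applied modulo $r_1$ to the multiset of generators appearing would produce a proper sub-tuple with the same residue, no larger objective (coordinates only decrease and each $r_j - r_1 \ge 0$), and strictly smaller length, a contradiction. Thus $\sum_{j\ge 2} z_j^{(i)} < r_1$ and $W_i \le r_k \sum_{j\ge 2} z_j^{(i)} < r_1 r_k$. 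With this replacement your argument closes, and the first exchange observation becomes unnecessary.
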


The functions in Theorem~\ref{t:maxminorig} are said to coincide for large $n$ with \emph{quasipolynomials}, that is, polynomial functions $\ZZ \to \RR$ with periodic coefficients.  In particular, 
$$\mathsf M(n) = \tfrac{1}{r_1}n + a(n) \qquad \text{and} \qquad \mathsf m(n) = \tfrac{1}{r_k}n + b(n)$$
for some periodic functions $a(n)$ and $b(n)$ with periods $r_1$ and $r_k$, respectively.

\section{Weighted factorization lengths}
\label{sec:weightedlengths}

Before examining parametrized families of numerical semigroups, we introduce a generalization of factorization length that independently weights each generator and plays a key role in the results of subsequent sections.  We give two main results in this section, each of which generalizes existing results for the usual factorization length.  The~first is Theorem~\ref{t:maxminquasi}, which generalizes \cite[Theorems~4.2 and~4.3]{elastsets} and joins a growing family of ``eventually quasipolynomial'' results concerning factorization length (see \cite{factorhilbert} and the references therein for an overview).  The second is Theorem~\ref{t:weighteddelta}, which gives weighted versions of \cite[Lemma~3]{arithmeticintdom} and \cite[Theorem 2.5]{bfdelta}, both of which are central to the study of delta sets.  

\begin{defn}\label{d:weightedlength}
Fix a numerical semigroup $S = \<r_1, \ldots, r_k\>$ and a rational vector $w = (w_1, \ldots, w_k) \in \QQ^k$ of \emph{weights}.  
Given $n \in S$ and $z = (z_1, \ldots, z_k) \in \mathsf Z(n)$, the \textit{weighted length} of $z$ is
$$|z|_w = w \cdot z = w_1z_1 + \cdots + w_kz_k,$$
and the \emph{weighted length set} of $n$ is
$$\mathsf L_{S,w}(n) = \{|z|_w : z \in \mathsf Z(n)\}.$$
The maps $\mathsf M_w: S \mapsto \QQ$ and $\mathsf m_w: S \mapsto \QQ$ given by
$$\mathsf M_w(n) = \max \mathsf L_{S,w}(n) \qquad \text{and} \qquad \mathsf m_w(n) = \min \mathsf L_{S,w}(n)$$
are the \emph{maximum weighted length} and \emph{minimum weighted length} functions, respectively.  
\end{defn}

\begin{defn}\label{d:worder}
Fix a numerical semigroup $S = \<r_1, \ldots, r_k\>$ and a weight vector $w =~(w_1, \ldots, w_k) \in \QQ^k$.  
The \emph{$w$-ordering} $\le_w$ on $\{r_1, \ldots, r_k\}$ is defined so that
$$r_i \le_w r_j \qquad \text{whenever} \qquad w_i/r_i \ge w_j/r_j.$$
Note that the $w$-ordering is transitive, but need not be a total (or even partial) ordering, as $r_i =_w r_j$ is possible for $r_i \ne r_j$.  
\end{defn}

\begin{remark}\label{r:standardlengthweighted}
The standard length $|\!\cdot\!|$ can be viewed as a special case of weighted length $|\cdot|_w$ with weight vector $w = (1,\ldots, 1)$.  In this case, the $w$-ordering on $r_1, \ldots, r_k$ is the usual total ordering in $\ZZ$.  
\end{remark}

\begin{example}\label{e:weightedlength}
Let $S = \<6, 9, 20\>$.  For the weight vector $w = (3,1,4)$, the $w$-ordering on the generators of $S$ is $6 <_w 20 <_w 9$ since $\tfrac{3}{6} > \tfrac{4}{20} > \tfrac{1}{9}$.  The same $w$-ordering is induced by $w = (3,-1,4)$, but some factorizations have negative weighted length, e.g.~$(2,12,1) \in \mathsf Z_S(140)$ has $|(2,12,1)|_w = -2$.  Figure~\ref{f:weightedlength} depicts $\mathsf m_{S,w}(-)$ for both weight vectors; evident is the eventually quasilinear property implied by Theorem~\ref{t:maxminquasi}.  
\end{example}

\begin{figure}
\begin{center}
\includegraphics[width=2.8in]{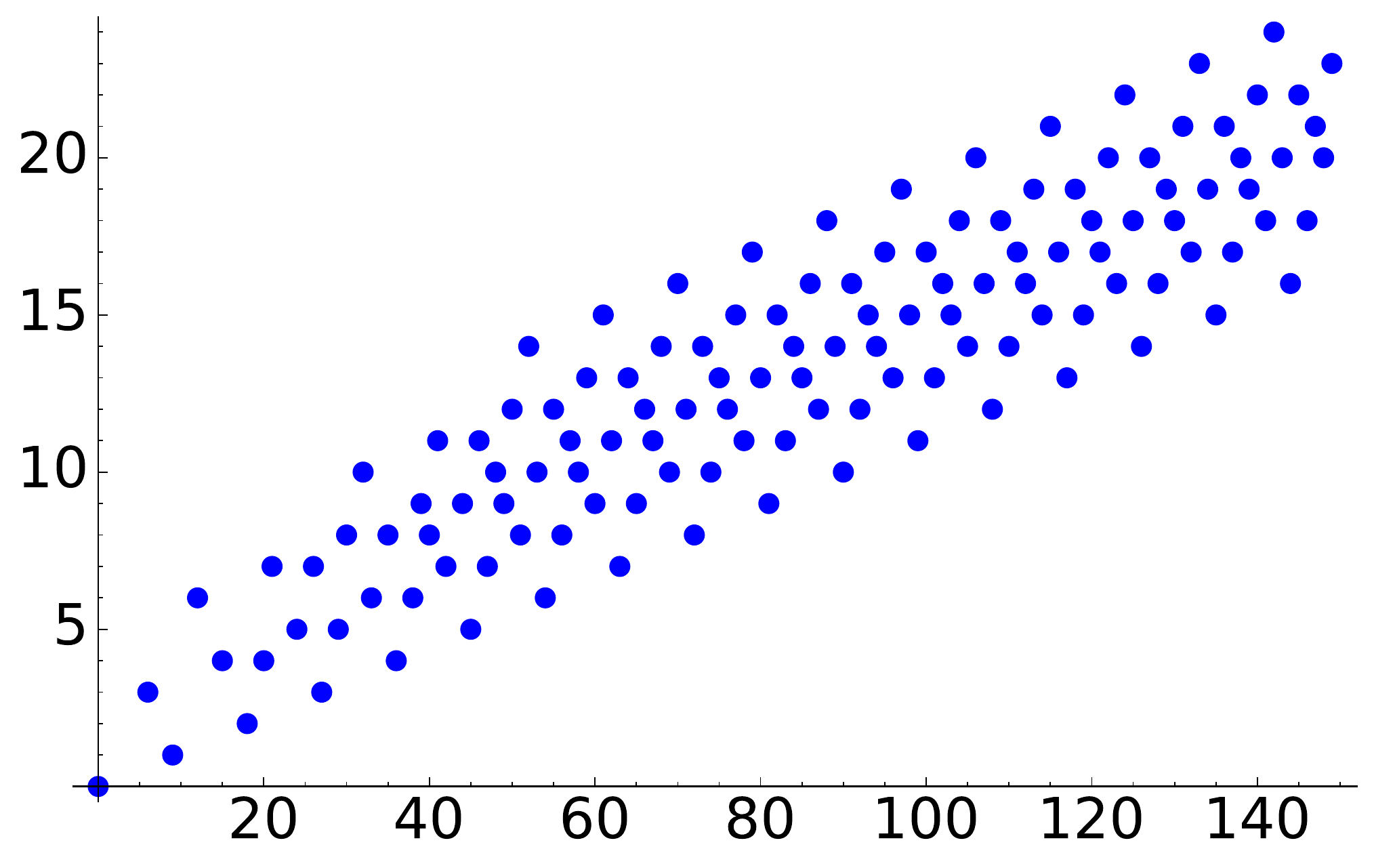}
\hspace{0.2in}
\includegraphics[width=2.8in]{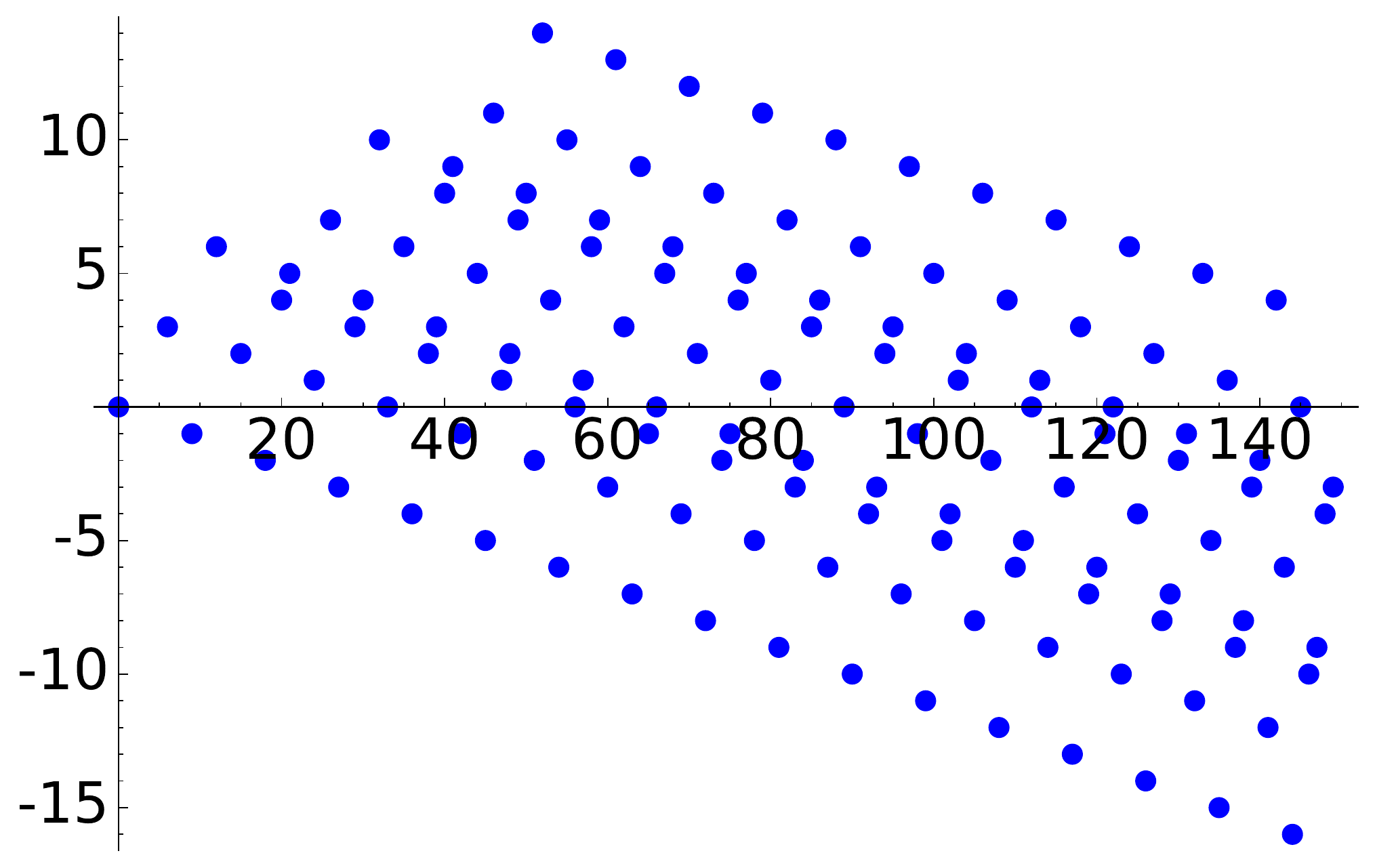}
\end{center}
\caption{Plots depicting the minimum weighted factorization lengths of elements of $S = \<6, 9, 20\>$ for the weight vectors $w = (3,1,4)$ (left) and $w = (3,-1,4)$ (right) from Example~\ref{e:weightedlength}, created using \texttt{Sage} and the \texttt{GAP} package \texttt{numericalsgps}~\cite{numericalsgpsgap}.
}
\label{f:weightedlength}
\end{figure}

\begin{lemma}[{\cite[Lemma~4.1]{elastsets}}]\label{l:oldlemma41}
Suppose $q \ge 1$, and fix $c_1, \ldots, c_r \in \ZZ$ with $r \ge q$. There exists $T \subsetneq \{1, \ldots, r\}$ satisfying $\sum_{i \in T} c_i \equiv \sum_{i=1}^r c_i \bmod q$.
\end{lemma}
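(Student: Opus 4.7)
The plan is to prove the contrapositive framing: rather than searching directly for a proper subset $T \subsetneq \{1, \ldots, r\}$ with $\sum_{i \in T} c_i \equiv \sum_{i=1}^r c_i \pmod q$, I would instead seek a \emph{nonempty} subset $T' \subseteq \{1, \ldots, r\}$ with $\sum_{i \in T'} c_i \equiv 0 \pmod q$, and then take $T = \{1, \ldots, r\} \setminus T'$. Since $T'$ is nonempty, $T$ is automatically a proper subset, and the congruence $\sum_{i \in T} c_i \equiv \sum_{i=1}^r c_i \pmod q$ is immediate.

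The existence of such a $T'$ is a classical pigeonhole argument on prefix sums. Define $s_0 = 0$ and $s_j = c_1 + c_2 + \cdots + c_j$ for $1 \le j \le r$. This produces $r + 1$ integers $s_0, s_1, \ldots, s_r$. Because $r \ge q$, we have $r + 1 > q$, so by the pigeonhole principle there must exist indices $0 \le i < j \le r$ with $s_i \equiv s_j \pmod q$. Then
$$\sum_{\ell = i+1}^{j} c_\ell \;=\; s_j - s_i \;\equiv\; 0 \pmod q,$$
and the index set $T' = \{i+1, i+2, \ldots, j\}$ is nonempty (since $i < j$) and contained in $\{1, \ldots, r\}$.

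Finally, setting $T = \{1, \ldots, r\} \setminus T'$, we obtain $T \subsetneq \{1, \ldots, r\}$ and
$$\sum_{i \in T} c_i \;=\; \sum_{i=1}^{r} c_i - \sum_{i \in T'} c_i \;\equiv\; \sum_{i=1}^{r} c_i \pmod q,$$
as desired. There is no significant obstacle here; the only subtlety is ensuring that $T$ is proper rather than merely a subset, which is why the pigeonhole pair is chosen with strict inequality $i < j$ so that $T'$ is nonempty. The hypothesis $r \ge q$ is used exactly to guarantee the pigeonhole collision among the $r+1$ prefix sums modulo $q$.
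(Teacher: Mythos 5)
Your proof is correct, and it uses the standard pigeonhole argument on the $r+1$ prefix sums $s_0 = 0, s_1, \ldots, s_r$ modulo $q$, together with the clean reduction of finding a proper subset $T$ to finding a nonempty subset $T'$ whose sum vanishes modulo $q$. The paper itself gives no proof of this lemma---it is cited verbatim from \cite[Lemma~4.1]{elastsets}---so there is no in-paper argument to compare against, but your argument is the natural (and very likely the original) one, and every step, including the use of $r \ge q$ to force a pigeonhole collision and the strict inequality $i < j$ to make $T'$ nonempty and hence $T$ proper, is justified.
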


\begin{lemma}\label{l:maxminquasi}
Fix a numerical semigroup $S = \<r_1, \ldots, r_k\>$, a weight vector $w \in \QQ^k$, and suppose $r_1 \le_w r_2 \le_w \cdots \le_w r_k$.  
\begin{enumerate}[(a)]
\item 
If $a \in \mathsf Z_S(n)$ satisfies $a_1 + \cdots + a_k \ge r_1$, then there is some factorization $b \in \mathsf Z_S(n)$ with $|b|_w \ge |a|_w$ and $b_1 > 0$.  

\item 
If $a \in \mathsf Z_S(n)$ satisfies $a_1 + \cdots + a_k \ge r_k$, then there is some factorization $b \in \mathsf Z_S(n)$ with $|b|_w \le |a|_w$ and $b_k > 0$.  

\end{enumerate}
\end{lemma}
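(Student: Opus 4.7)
The plan is to derive both parts from the pigeonhole-style Lemma~\ref{l:oldlemma41} applied to the multiset of generator values appearing in the factorization $a$. I will describe part~(a) in detail; part~(b) is dual.

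I would regard $a$ as a multiset of $a_1 + \cdots + a_k \ge r_1$ tokens, each token carrying the value of the corresponding generator it represents. Invoking Lemma~\ref{l:oldlemma41} with $q = r_1$ produces a proper subset of the tokens whose sum is congruent to the total sum modulo $r_1$; equivalently, the complement is a \emph{non-empty} sub-multiset whose values sum to a strictly positive multiple $mr_1$ of $r_1$ (strict positivity because the complement is non-empty and all generators are positive). Writing $e_i$ for the multiplicity of $r_i$ in this complementary sub-multiset, I would define $b$ by $b_1 = a_1 + m$ and $b_i = a_i - e_i$ for $i \ge 2$. Because $e_i \le a_i$ and $\sum_i e_i r_i = m r_1$, the vector $b$ lies in $\ZZ_{\ge 0}^k$ and factors the same element $n$, and $b_1 \ge m \ge 1$.

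The final step is to compare weighted lengths via the single algebraic rearrangement
$$|b|_w - |a|_w \;=\; m w_1 - \sum_i e_i w_i \;=\; \sum_i e_i r_i \left(\frac{w_1}{r_1} - \frac{w_i}{r_i}\right) \;\ge\; 0,$$
where the final inequality uses the hypothesis $r_1 \le_w r_i$, which by Definition~\ref{d:worder} is exactly $w_1/r_1 \ge w_i/r_i$. For part~(b), the same construction with $r_k$ in place of $r_1$ yields $b_k > 0$, and because now $w_i/r_i \ge w_k/r_k$ for all $i$, the analogous difference is non-positive. The main (and essentially only) subtlety is ensuring that the sub-multiset produced by Lemma~\ref{l:oldlemma41} is non-empty so that $m > 0$, which is precisely what its ``proper subset'' conclusion guarantees; everything else is a direct algebraic identity combined with the defining inequality of the $w$-ordering.
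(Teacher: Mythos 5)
Your approach matches the paper's: both invoke Lemma~\ref{l:oldlemma41} to produce a factorization $b$ with $b_1>0$, then read $|b|_w\ge|a|_w$ directly off the $w$-ordering inequality, with your one-line telescoping of $|b|_w-|a|_w$ compressing what the paper phrases as a preliminary claim about disjointly supported factorizations. There is a bookkeeping slip, however: the complementary sub-multiset may contain copies of $r_1$ (so $e_1>0$), in which case $b_1=a_1+m$ gives $\sum_i b_i r_i = n + e_1 r_1 \ne n$; the correct update is $b_1 = a_1 - e_1 + m$, which still satisfies $b_1\ge m\ge 1$ since $e_1\le a_1$, and is in fact the definition that produces the expression $mw_1-\sum_i e_i w_i$ you wrote down. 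Equivalently, you can dispose of the case $a_1>0$ first by taking $b=a$, and then run the pigeonhole argument only on the tokens indexed $2,\ldots,k$; this forces $e_1=0$ and is exactly what the paper does.
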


\begin{proof}
First, we claim if $a' = (0, a_2', \ldots, a_k'), b' = (b_1', 0, \ldots, 0) \in \mathsf Z(n)$, then $|b'|_w \ge |a'|_w$.  Indeed, this follows from the fact that
$$|a'|_w
= \sum_{i=2}^k w_ia_i' 
= \sum_{i=2}^k \frac{w_i}{r_i} r_ia_i'
\le \sum_{i=2}^k \frac{w_1}{r_1} r_ia_i'
= \frac{w_1}{r_1} r_1b_1' 
= w_1b_1'
= |b'|_w.$$
Now, under the assumptions for part~(a), we see
$$a_1r_1 = n - a_2r_2 - \cdots - a_kr_k$$
implies $a_2r_2 + \ldots + a_kr_k \equiv n \bmod r_1$.  
Lemma~\ref{l:oldlemma41} then guarantees the existence of integers $b_2, \ldots, b_k \ge 0$ such that (i)~$b_i \le a_i$ for each $i > 1$, (ii)~$\sum_{i =2}^ka_i >\sum_{i =2}^kb_i$, and (iii)~$b_2r_2 + \cdots + b_kr_k \equiv n \bmod r_1$.
This in particular means there exists $b_1 > 0$ so that $b = (b_1,\ldots, b_k) \in \mathsf Z(n)$.  Rearranging the equation
$$n = a_1r_1 + \cdots + a_kr_k = b_1r_1 + \cdots + b_kr_k$$
yields
$$(b_1 - a_1)r_1 = (a_2 - b_2)r_2 + \cdots + (a_k - b_k)r_k$$
Applying the above claim to $(b_1-a_1, 0, \ldots, 0)$ and
$(0,a_2 - b_2, \ldots, a_k - b_k)$ implies
$$w_1(b_1 - a_1) \ge w_2(a_2 - b_2) + \cdots + w_k(a_k - b_k),$$
meaning
$$w_1b_1 + \cdots + w_kb_k \ge w_1a_1 + \cdots + w_ka_k,$$
so $|b|_w \ge |a|_w$.  This proves part~(a).  

The proof of part~(b) is analogous and thus omitted.  
\end{proof}

\begin{thm}\label{t:maxminquasi}
Fix a numerical semigroup $S = \<r_1, \ldots, r_k\>$ and a weight vector $w \in \QQ^k$, and suppose $r_1 \le_w r_2 \le_w \cdots \le_w r_k$.  Let $R = \max(r_1, \ldots, r_k)$.  
\begin{enumerate}[(a)]
\item
\label{t:maxminquasi:maxlen}
For all $n > R^2$, the maximal weighted length function $\mathsf M_w: S \to \QQ$ satisfies
$$\mathsf M_w(n) = \mathsf M_w(n - r_1) + w_1.$$

\item 
\label{t:maxminquasi:minlen}
For all $n > R^2$, the minimal weighted factorization length $\mathsf m_w: S \to \QQ$ satisfies 
$$\mathsf m_w(n) = \mathsf m_w(n - r_k) + w_k.$$

\end{enumerate}
\end{thm}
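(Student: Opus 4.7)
The plan is to prove each part by a two-sided inequality, with the preparatory Lemma~\ref{l:maxminquasi} supplying the hard direction. I focus on part~(a); part~(b) is entirely analogous, with the roles of $r_1$ and $r_k$ swapped and all inequalities reversed.

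For the easy direction, $\mathsf M_w(n) \ge \mathsf M_w(n-r_1) + w_1$, take any $z \in \mathsf Z(n-r_1)$ with $|z|_w = \mathsf M_w(n-r_1)$; then $z + e_1 \in \mathsf Z(n)$ has weighted length $|z|_w + w_1$. Note this direction does not require any hypothesis on $n$.

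For the reverse direction $\mathsf M_w(n) \le \mathsf M_w(n-r_1) + w_1$, I would pick $a \in \mathsf Z(n)$ achieving $|a|_w = \mathsf M_w(n)$ and argue that we may assume $a_1 > 0$. If already $a_1 > 0$, then $a - e_1 \in \mathsf Z(n-r_1)$ yields $\mathsf M_w(n-r_1) \ge |a|_w - w_1$, and we are done. If instead $a_1 = 0$, the goal is to invoke Lemma~\ref{l:maxminquasi}(a) to produce $b \in \mathsf Z(n)$ with $b_1 > 0$ and $|b|_w \ge |a|_w$; then maximality of $a$ forces $|b|_w = |a|_w$, and we finish as before with $b - e_1$. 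The hypothesis of Lemma~\ref{l:maxminquasi}(a) is $a_1 + \cdots + a_k \ge r_1$, which follows from the bound $n > R^2$: for any factorization of $n$, $n = \sum a_i r_i \le R(a_1 + \cdots + a_k)$, so $a_1 + \cdots + a_k \ge n/R > R \ge r_1$.

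For part~(b), the same template works: pick a minimizer $a$, note $a_1 + \cdots + a_k > R \ge r_k$, and use Lemma~\ref{l:maxminquasi}(b) to replace $a$ by some $b$ with $b_k > 0$ and $|b|_w \le |a|_w$, which by minimality is an equality; then $b - e_k \in \mathsf Z(n-r_k)$ gives $\mathsf m_w(n - r_k) \le |a|_w - w_k$, while the opposite inequality is obtained by appending $e_k$ to a minimizer of $n - r_k$.

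I do not expect a serious obstacle: Lemma~\ref{l:maxminquasi} already packages the algebraic content of the weighted length exchange, and the bound $n > R^2$ is calibrated exactly so that any factorization of $n$ has standard length at least $R$, unlocking that lemma. The only mild care needed is the observation that maximality (resp. minimality) of the chosen factorization $a$ promotes the inequality supplied by Lemma~\ref{l:maxminquasi} to an equality, so that replacing $a$ by $b$ does not change the weighted length we are tracking.
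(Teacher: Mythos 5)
Your proposal is correct and follows the paper's own argument in all essentials: both proofs pick a factorization $a$ of extremal weighted length, observe that $n > R^2$ forces $a_1 + \cdots + a_k > R \ge r_1$ (resp. $r_k$), invoke Lemma~\ref{l:maxminquasi} to replace $a$ by $b$ with positive first (resp. last) coordinate and no loss in weighted length, and then shift by $e_1$ (resp. $e_k$). Your two-sided presentation and your direct derivation of the standard-length lower bound are slightly cleaner than the paper's case split on whether $a_2 + \cdots + a_k < r_1$, but the content is the same.
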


\begin{proof} 
Suppose $n > R^2$.  First, we claim there is a factorization of $n$ with maximum weighted length with positive first coordinate.  Indeed, fix any factorization $a \in \mathsf Z(n)$.  If $a_2 + \cdots + a_k < r_1$, then $a_1 > 0$ by the assumption on $n$.  On the other hand, if $a_2 + \cdots + a_k \ge r_1$ and $a_1 = 0$, then the claim follows from Lemma~\ref{l:maxminquasi}(a).  

Now, by the above claim, let $a \in \mathsf Z(n)$ denote a maximum weighted length factorization with $a_1 > 0$.  This means $a' = (a_1 - 1, a_2, \ldots, a_k) \in \mathsf Z(n - r_1)$ also has maximum weighted factorization length, so 
$$\mathsf M_w(n - r_1)
= |a '|_w 
= w_1(a_1 - 1) + w_2a_2 + \ldots w_ka_k
= |a|_w - w_1
= \mathsf M_w(n) - w_1,$$
thereby proving part~(a).  

By a similar argument, some minimal weighted length factorization $a \in \mathsf Z(n)$ has $a_k > 0$.  The proof of part~(b) then follows analogously.  
\end{proof}

\begin{remark}\label{r:wtie}
For a given numerical semigroup $S = \<r_1, \ldots, r_k\>$, if a particular weight vector $w$ induces a ``tie'' $r_1 =_w \cdots =_w r_j$ in the $w$-ordering, then Theorem~\ref{t:maxminquasi} obtains an improved period $\gcd(r_1, \ldots, r_j)$ for the quasilinear function $\mathsf M_{S,w}$.  For example, if~$S = \<6,9,10,14\>$ and $w = (2, 3, 5, 7)$, then $r_1 =_w r_2 >_w r_3 =_w r_4$, and for large $n$, $\mathsf M_{S,w}(n)$ and $\mathsf m_{S,w}(n)$ are each quasilinear with minimal periods $2$ and $3$, respectively.  See Figure~\ref{f:wtie} for a depiction.  
\end{remark}

\begin{figure}
\begin{center}
\includegraphics[width=2.8in]{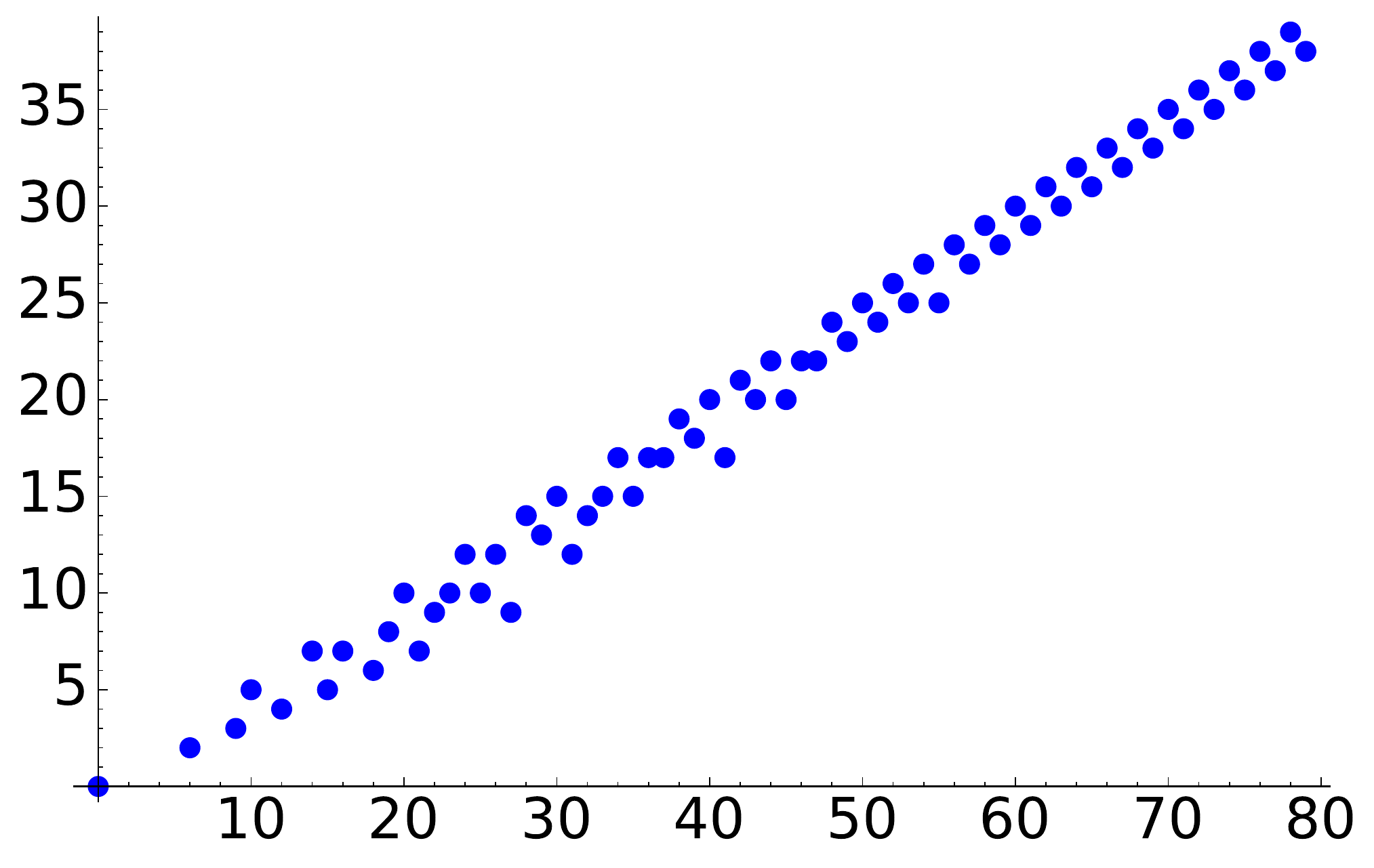}
\hspace{0.2in}
\includegraphics[width=2.8in]{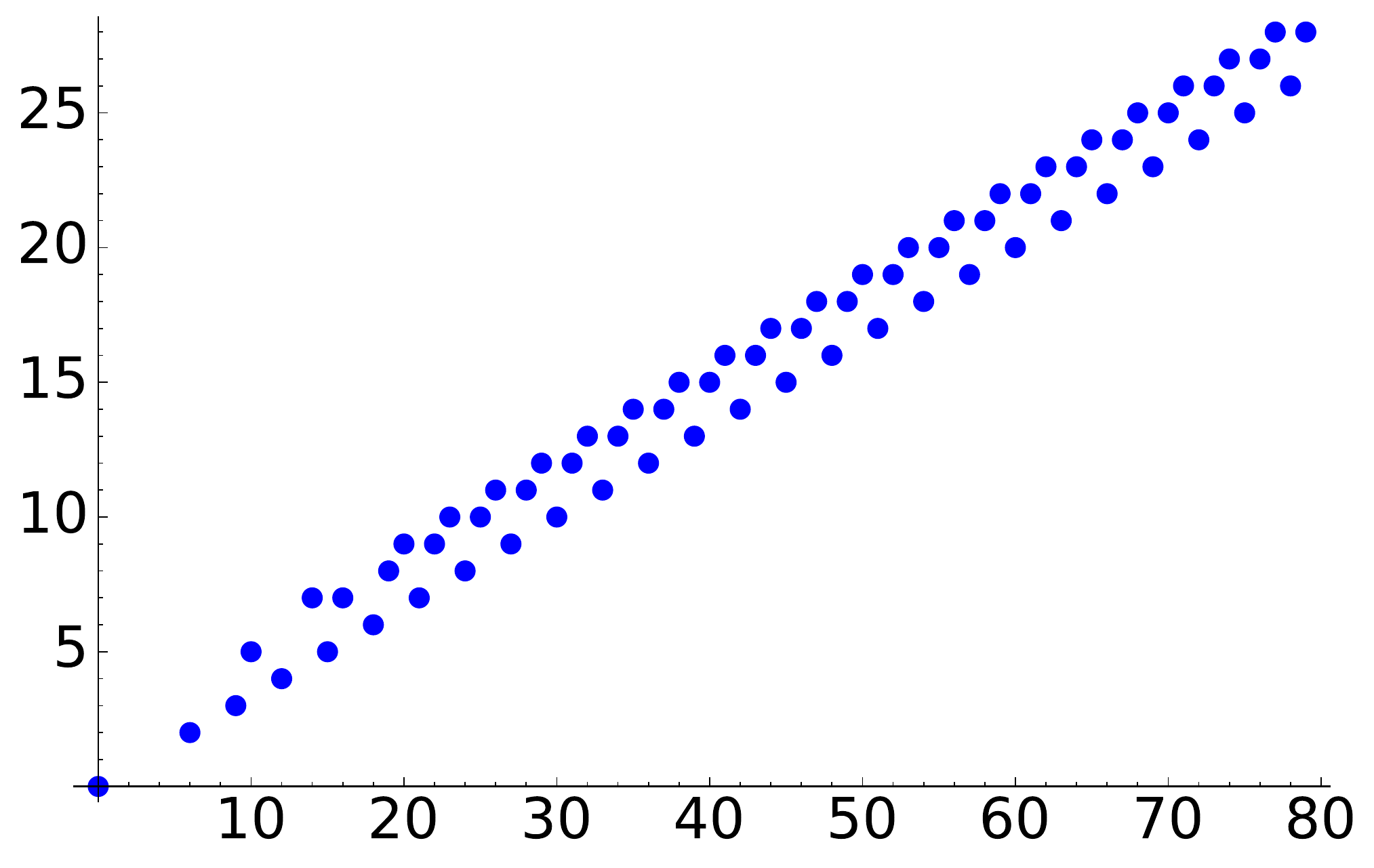}
\end{center}
\caption{Maximum (left) and minimum (right) weighted factorization lengths for $S = \<6, 9, 10, 14\>$ and $w = (2,3,5,7)$ from Remark~\ref{r:wtie}, created using \texttt{Sage} and the \texttt{GAP} package \texttt{numericalsgps}~\cite{numericalsgpsgap}.
}
\label{f:wtie}
\end{figure}

\begin{remark}\label{r:maxminwbound}
Much to our surprise, the bounds in Theorem~\ref{t:maxminquasi} do not depend on $w$, although it is worth noting that an optimal bound necessarily depends on $w$.  Indeed, suppose $S = \<9,10,23\>$.  If $w = (1,3,5)$, then $n = 64$ is the largest $n$ for which the first equality in Theorem~\ref{t:maxminquasi} fails to hold, and if $w = (6,9,5)$, then $n = 81$ is the largest such $n$.  For both weight vectors, the generator $10$ is minimal under the $w$-ordering.  
\end{remark}

The following corollary of Lemma~\ref{l:maxminquasi} and Theorem~\ref{t:maxminquasi} will be used in Section~\ref{sec:linearfamilies}.  Note the additional assumption that $w$ has positive integer entries.  

\begin{cor}\label{c:maxminquasi}
Fix a numerical semigroup $S = \<r_1, \ldots, r_k\>$, a weight vector $w \in \ZZ_{\ge 1}^k$, and suppose $r_1 \le_w r_2 \le_w \cdots \le_w r_k$.  Fix $w_0 \in \ZZ_{\ge 1}$.  
\begin{enumerate}[(a)]
\item 
If $a \in \mathsf Z_S(n)$ satisfies $a_1 + \cdots + a_k \ge w_0r_1$, then there is some factorization $b \in \mathsf Z_S(n)$ with $|b|_w - |a|_w \in w_0\ZZ_{\ge 0}$ and $b_1 > 0$.  

\item 
If $a \in \mathsf Z_S(n)$ satisfies $a_1 + \cdots + a_k \ge w_0r_k$, then there is some factorization $b \in \mathsf Z_S(n)$ with $|a|_w - |b|_w \in w_0\ZZ_{\ge 0}$ and $b_k > 0$.  

\end{enumerate}
\end{cor}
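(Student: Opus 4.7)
The plan is to prove part~(a) by a two-level pigeonhole argument refining the proof of Lemma~\ref{l:maxminquasi}(a), then obtain part~(b) by the analogous argument with $r_k$ in place of $r_1$.  The key observation is that the strengthened hypothesis $a_1 + \cdots + a_k \ge w_0 r_1$ is exactly sufficient to apply Lemma~\ref{l:oldlemma41} \emph{twice}:\ first to split off $w_0$ chunks each with sum divisible by $r_1$, and then to select a sub-collection of those chunks whose total weighted change is divisible by $w_0$.

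For part~(a), list the entries of $a$ with multiplicity as $c_1, \ldots, c_r$ (so $c_i \in \{r_1, \ldots, r_k\}$ and $r = a_1 + \cdots + a_k$), and record the corresponding generator index $\sigma(i)$, so that $c_i = r_{\sigma(i)}$ and the weight attached to $c_i$ is $w_{\sigma(i)}$.  Form the partial sums $s_j = c_1 + \cdots + c_j$ for $j = 0, 1, \ldots, r$.  Since there are $r + 1 \ge w_0 r_1 + 1$ such sums but only $r_1$ residue classes modulo $r_1$, pigeonhole yields indices $0 \le j_0 < j_1 < \cdots < j_{w_0} \le r$ with $s_{j_0} \equiv \cdots \equiv s_{j_{w_0}} \pmod{r_1}$.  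This carves out $w_0$ nonempty ``chunks'' $T_\ell = \{j_{\ell-1}+1, \ldots, j_\ell\}$, each satisfying $\sum_{i \in T_\ell} c_i = r_1 \alpha_\ell$ for some positive integer $\alpha_\ell$.  The $w$-ordering hypothesis, used exactly as in the proof of Lemma~\ref{l:maxminquasi}(a), implies $c'_\ell := w_1 \alpha_\ell - \sum_{i \in T_\ell} w_{\sigma(i)} \ge 0$ for each $\ell$.

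Now apply Lemma~\ref{l:oldlemma41} a second time, to the $w_0$ nonnegative integers $c'_1, \ldots, c'_{w_0}$ with $q = w_0$, obtaining a proper subset $U \subsetneq \{1, \ldots, w_0\}$ with $\sum_{\ell \notin U} c'_\ell \equiv 0 \pmod{w_0}$.  Let $\tilde U = \{1, \ldots, w_0\} \setminus U$ and form $b$ by removing the entries indexed by $\bigcup_{\ell \in \tilde U} T_\ell$ from $a$ and replacing them by $\sum_{\ell \in \tilde U} \alpha_\ell$ additional copies of $r_1$.  Routine bookkeeping then verifies that $b \in \mathsf Z_S(n)$, that $b_1 > 0$ (since $\tilde U$ is nonempty and every $\alpha_\ell \ge 1$), and that $|b|_w - |a|_w = \sum_{\ell \in \tilde U} c'_\ell \in w_0 \ZZ_{\ge 0}$.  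Part~(b) is entirely parallel, using $r_k$ in the first pigeonhole and the opposite end of the $w$-ordering to conclude $c'_\ell := \sum_{i \in T_\ell} w_{\sigma(i)} - w_k \alpha_\ell \ge 0$.

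The main obstacle is orchestrating the two pigeonhole applications while simultaneously preserving the nonemptiness of $\tilde U$ (which is what ensures $b_1 > 0$) and the nonnegativity of each $c'_\ell$ (which is what ensures $|b|_w \ge |a|_w$); once these two properties are in place, divisibility by $w_0$ is immediate from the second invocation of Lemma~\ref{l:oldlemma41}.
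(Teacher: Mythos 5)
Your proposal is correct, and the conclusion is the same as the paper's, but the implementation of the argument differs in one notable respect. The paper handles $a_1>0$ trivially (take $b=a$), then for $a_1=0$ decomposes $a$ arbitrarily into $w_0$ vectors $c_1,\ldots,c_{w_0}\in\ZZ_{\ge 0}^k$ each with coordinate sum at least $r_1$, applies Lemma~\ref{l:maxminquasi}(a) to each to produce $d_i$ with $(d_i)_1>0$ and $\ell_i:=|d_i|_w-|c_i|_w\ge 0$, and finishes with a single application of Lemma~\ref{l:oldlemma41} to the nonnegative integers $\ell_1,\ldots,\ell_{w_0}$ to choose a nonempty subcollection whose total weighted change is a nonnegative multiple of $w_0$. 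You instead flatten $a$ into a multiset of generator occurrences and run a single pigeonhole on partial sums modulo $r_1$ to carve out $w_0$ nonempty consecutive chunks each summing to a multiple of $r_1$, then swap chunks for copies of $r_1$. Both arguments use the same $w$-ordering inequality (your claim that each $c_\ell'\ge 0$ is exactly the first claim inside Lemma~\ref{l:maxminquasi}'s proof) and both close with Lemma~\ref{l:oldlemma41}. Your version is a bit more self-contained since it bypasses the intermediate appeal to Lemma~\ref{l:maxminquasi}(a); the paper's version is shorter because it reuses that lemma as a black box.

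One small imprecision worth patching: you justify $b_1>0$ by ``$\tilde U$ nonempty and $\alpha_\ell\ge 1$,'' but if $a_1>0$ then some of the removed chunk entries could themselves equal $r_1$, and the net change to the first coordinate is $\sum_{\ell\in\tilde U}(\alpha_\ell-\beta_\ell)$ where $\beta_\ell$ counts the copies of $r_1$ in $T_\ell$. The conclusion is still correct, since $r_1\beta_\ell\le\sum_{i\in T_\ell}c_i=r_1\alpha_\ell$ gives $\beta_\ell\le\alpha_\ell$ and hence $b_1\ge a_1>0$, and when $a_1=0$ all $\beta_\ell=0$ so your stated reason applies verbatim. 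The cleanest fix is to dispose of the case $a_1>0$ at the outset (take $b=a$), exactly as the paper does, after which your reasoning is airtight.
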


\begin{proof}
If $a_1 > 0$, then choosing $b = a$ proves part~(a), so suppose $a_1 = 0$.  Fix $a' \in \ZZ_{\ge 0}^k$ such that $a_i' \le a_i$ for each $i$ and $a_1' + \cdots + a_k' \ge r_1$, and write $n' \in S$ so that $a' \in \mathsf Z(n')$.  By Lemma~\ref{l:maxminquasi}(a), there exists $b' \in \mathsf Z(n')$ with $|b'|_w \ge |a'|_w$ and $b_1' > 0$.  If $|b'|_w = |a'|_w$, then choosing $b = b' + (a - a')$ proves part~(a), so suppose $|b'|_w > |a'|_w$.  

Now, fix a collection $c_1, \ldots, c_{w_0} \in \ZZ_{\ge 0}^k$ of vectors that sum to $a$.  
Apply the above argument to each $c_i$ (in the role of $a'$) to obtain vectors $d_1, \ldots, d_{w_0} \in \ZZ_{\ge 0}^k$ (i.e., each corresponding vector $b'$ above), and let $\ell_i = |d_i|_w - |c_i|_w$.  By Lemma~\ref{l:oldlemma41}, there exists a subset $T \subset \{1, \ldots, w_0\}$ so that $\sum_{i \in T} \ell_i \equiv 0 \bmod w_0$.  Letting 
$$b = \sum_{i \in T} d_i + a - \sum_{i \in T} c_i$$
we obtain
$$|b|_w - |a|_w = \biggl|\sum_{i \in T} d_i + a - \sum_{i \in T} c_i \biggr|_w - |a|_w = \sum_{i \in T} (|d_i|_w - |c_i|_w) \in w_0\ZZ_{\ge 0}$$
which completes the proof of part~(a).  

As in the proof of Lemma~\ref{l:maxminquasi}, the proof of part~(b) is analogous.  
\end{proof}

For the remainder of this section, we turn our attention to the weighted delta set.  As~with weighted length sets, choosing the weight vector $w = (1, \ldots, 1)$ in the following definition recovers the usual delta set.  

\begin{defn}\label{d:deltasets}
Fix a numerical semigroup $S = \<r_1, \ldots, r_k\>$, a weight vector $w \in \QQ^k$, and an element $n \in S$, and write
$$\mathsf L_{S,w}(n) = \{\ell_1 < \ell_2 < \cdots < \ell_r\}.$$
The \emph{weighted delta set} of $n$ is given by
$$\Delta_{S,w}(n) = \{\ell_i - \ell_{i-1}: i = 2, \ldots, r\},$$
and the \emph{weighted delta set} of $S$ is given by
$$\Delta_w(S) = \bigcup_{n \in S} \Delta_{S,w}(n).$$
Note that, unlike the usual delta set, it is possible to have $\Delta_w(S) = \emptyset$.  Indeed, this happens when $w_i = r_i$ for every $i$, as $\mathsf L_w(n) = \{n\}$ for every $n \in S$ in this case.  
\end{defn}

\begin{thm}\label{t:weighteddelta}
Fix a numerical semigroup $S = \<r_1, \ldots, r_k\>$ and a vector $w \in \QQ^k$.  
\begin{enumerate}[(a)]
\item 
If $\Delta_w(S) \ne \emptyset$, then $\Delta_w(S) \subset d\ZZ_{\ge 1}$, where $d = \min \Delta_w(S)$.  

\item 
We have
$$\min \Delta_w(S) = \gcd(\{w_ir_j - w_jr_i : 1 \le i < j \le r\}).$$

\item 
The set $\Delta_w(S)$ is finite.  Moreover,
$$\max \Delta_w(S) = \max_{n \in \Betti(S)} \max \Delta_w(n).$$

\end{enumerate}
\end{thm}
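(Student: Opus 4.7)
The plan is to package all three parts through the cyclic subgroup $\Lambda = \{w \cdot v : v \in L\} \subseteq \QQ$, where $L = \{v \in \ZZ^k : v \cdot r = 0\}$ is the kernel lattice of the evaluation map $\ZZ^k \to \ZZ$ sending $e_i \mapsto r_i$ (with $r = (r_1, \ldots, r_k)$). Since $L$ has rank $k-1$, the image $\Lambda$ is a cyclic subgroup of $\QQ$, say $\Lambda = d\ZZ$ for its positive generator $d \ge 0$. Two structural observations drive everything: (O1) for any $n \in S$ and $z, z' \in \mathsf Z(n)$, the difference $z - z'$ lies in $L$, so $|z|_w - |z'|_w \in \Lambda$; in particular $\mathsf L_{S,w}(n)$ is contained in a single coset of $d\ZZ$ in $\QQ$; (O2) every $v \in L$ is realized as a difference of two factorizations of a common element: decompose $v = v^+ - v^-$ with disjoint supports and nonnegative entries, and observe that $v^\pm \in \mathsf Z(m)$ for $m = v^+ \cdot r$.

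Part (a) then follows immediately: by (O1), every consecutive difference in every $\mathsf L_{S,w}(n)$ lies in $\Lambda \cap \QQ_{>0} = d\ZZ_{\ge 1}$, so $\Delta_w(S) \subseteq d\ZZ_{\ge 1}$; conversely, choosing $v \in L$ with $w \cdot v = d$ and applying (O2) yields $v^\pm \in \mathsf Z(m)$ whose weighted lengths $|v^-|_w$ and $|v^-|_w + d = |v^+|_w$ are consecutive in $\mathsf L_{S,w}(m)$ (the coset $|v^-|_w + d\ZZ$ contains no element strictly between them), so $d = \min \Delta_w(S)$. For part (b), the vectors $v_{ij} := r_j e_i - r_i e_j \in L$ satisfy $w \cdot v_{ij} = w_i r_j - w_j r_i \in \Lambda$, forcing $d \mid \gcd\{w_i r_j - w_j r_i\}$; the reverse divisibility uses the standard fact that the $v_{ij}$ generate $L$ as a $\ZZ$-lattice, making every element of $\Lambda$ a $\ZZ$-combination of the $w_i r_j - w_j r_i$.

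For part (c), set $B := \max_{m \in \Betti(S)} \max \Delta_w(m)$, which is finite since $\Betti(S)$ and each $\mathsf L_{S,w}(m)$ are finite. I claim $\max \Delta_w(n) \le B$ for every $n \in S$, from which both the finiteness of $\Delta_w(S)$ and the claimed equality follow (the reverse $\ge$ being immediate from $\Betti(S) \subseteq S$). Proceeding by strong induction on $n$, suppose the claim fails, and pick a minimum such $n$ with $D := \max \Delta_w(n) > B$, realizing $D = \ell_2 - \ell_1$ via consecutive lengths in $\mathsf L_{S,w}(n)$ with witnesses $z_1, z_2 \in \mathsf Z(n)$. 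The minimality forces $n \notin \Betti(S)$, so $\nabla_n$ is connected; walking a path from $z_1$ to $z_2$ yields a crossing edge $u \sim u'$ passing from $\{z : |z|_w \le \ell_1\}$ to $\{z : |z|_w \ge \ell_2\}$, sharing some generator $r_j$. Then $u - e_j, u' - e_j \in \mathsf Z(n - r_j)$ have weighted lengths $\le \ell_1 - w_j$ and $\ge \ell_2 - w_j$. The containment $\mathsf L_{S,w}(n - r_j) + w_j \subseteq \mathsf L_{S,w}(n)$ (re-adding an $r_j$) forces $\mathsf L_{S,w}(n - r_j)$ to avoid the open interval $(\ell_1 - w_j, \ell_2 - w_j)$, so some consecutive pair $\lambda \le \ell_1 - w_j < \ell_2 - w_j \le \lambda'$ in $\mathsf L_{S,w}(n - r_j)$ must straddle this gap, giving $\max \Delta_w(n - r_j) \ge \lambda' - \lambda \ge D > B$ and contradicting the minimality of $n$. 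The main obstacle is precisely this crossing-edge reduction: guaranteeing that the gap witnessed at the smaller element $n - r_j$ survives as a single consecutive gap rather than being spread across several smaller ones, which is exactly what the coset-avoidance argument above is engineered to supply.
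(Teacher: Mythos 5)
Your proposal is correct, and while parts (a) and (b) are morally equivalent to the paper's treatment (you package the argument via the kernel lattice $L$ and the cyclic image $\Lambda = w\cdot L = d\ZZ$, whereas the paper works with an explicit common denominator $\delta$ and a B\'ezout identity, then reduces modulo the first coordinate — both hinge on the $v_{ij}$ generating $L$ when $\gcd(r_1,\dots,r_k)=1$, a fact you cite as ``standard'' and the paper establishes by a coordinate-by-coordinate induction), part (c) is a genuinely different argument. The paper invokes a cited result from the delta-set literature, namely \cite[Lemma~2.1]{bfdelta}, which furnishes a chain of factorizations whose consecutive terms differ by a Betti-element relation, and then observes that one link of this chain must ``straddle'' the consecutive gap $|y|_w - |x|_w$. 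You instead run a self-contained strong induction on the semigroup element: if $n$ is a minimal counterexample, $\nabla_n$ is connected, so a path from a minimum-length to a maximum-length witness crosses the gap along some edge sharing a generator $r_j$; subtracting $e_j$ from both endpoints transfers a gap of width at least $D$ to $\mathsf L_{S,w}(n-r_j)$, and the coset-avoidance observation $\mathsf L_{S,w}(n-r_j)+w_j \subseteq \mathsf L_{S,w}(n)$ guarantees the gap is not refined. What the paper's approach buys is brevity by outsourcing the chain construction to a known lemma; what yours buys is self-containment and the avoidance of an external citation, at the cost of a slightly longer induction. Both are sound. Two small points worth tightening if you were to write this up: state explicitly that you may assume $\gcd(r_1,\dots,r_k)=1$ (otherwise the $v_{ij}$ generate a proper sublattice and the formula in (b) fails, as one sees already for $S=\langle 2,4\rangle$), and note that $B$ should be interpreted as a max over those Betti elements with nonempty weighted delta set, which is harmless since your descent shows such a Betti element exists whenever $\Delta_w(S)\ne\emptyset$.
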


\begin{proof}
Each $w_i = t_i/u_i$ for some $t_i, u_i \in \ZZ$, so we must have $\Delta_w(S) \subset \delta\ZZ_{\ge 1}$, where $\delta = 1/(u_1 \cdots u_k)$.  Fix $d' \in \Delta_w(S)$, and fix $c, c' \in \ZZ_{\ge 1}$ so that $d = c\delta$ and $d' = c'\delta$.  Write $\gcd(c,c') = mc - m'c'$ for $m, m' \in \ZZ_{\ge 1}$.  We must have elements $n, n' \in S$ and factorizations $a, b \in \mathsf Z(n)$ and $a', b' \in \mathsf Z(n')$ so that $|a|_w - |b|_w = d$ and $|a'|_w - |b'|_w = d'$.  By the linearity of $|\cdot|_w$, the factorizations $ma + m'b', m'a' + mb \in \mathsf Z(mn + m'n')$ satisfy
$$|ma + m'b'|_w - |m'a' + mb|_w = m(|a|_w - |b|_w) - m'(|a'|_w - |b'|_w) = \gcd(c,c')\delta,$$
so by the minimality of $d$, we conclude $c = \gcd(c,c')$.  This proves part~(a).  

To prove part~(b), let 
$$d' = \gcd(\{w_ir_j - w_jr_i : 1 \le i < j \le r\}).$$
Since $r_je_i, r_ie_j \in \mathsf Z(r_ir_j)$, the above argument implies
$$d \mid w_ir_j - w_jr_i = |r_je_i|_w - |r_ie_j|_w,$$
meaning $d \mid d'$.  Conversely, suppose
$$a_1r_1 + \cdots + a_kr_k = b_1r_1 + \cdots + b_kr_k.$$
In order to show $d'$ divides $|a|_w - |b|_w = |a - b|_w$, by the linearity of $|\cdot|_w$ it suffices to express $a - b$ as an integer combination of the vectors $e_{ij} = r_je_i - r_ie_j$.  Notice that
$$(a_1 - b_1)r_1 = (b_2 - a_2)r_2 + \cdots + (b_k - a_k)r_k$$
and since $\gcd(r_1, \ldots, r_k) = 1$, we must have $\gcd(r_2, \ldots, r_k) \mid (a_1 - b_1)$.  As such, $a_1 - b_1 = c_2r_2 + \cdots + c_kr_k$ for some $c_i \in \ZZ$, meaning 
$$a - b - c_2e_{12} - \cdots - c_ke_{1k} = (a_2 - b_2 + c_2r_1)e_2 + \cdots + (a_k - b_k + c_kr_1)e_k,$$
which has first coordinate $0$.  Induction on $k$ concludes the proof of part~(b).  

For part~(c), fix $n \in S$ and $x, y \in \mathsf Z(n)$ where $|x|_w < |y|_w$ are sequential in $\mathsf L_w(n)$.  By~\cite[Lemma 2.1]{bfdelta}, there is a chain of factorizations $x_0, \ldots, x_t \in \mathsf Z(n)$ with $x_0 = x$, $x_t = y$, and $(x_i, x_{i+1}) = (a_i + c_i, b_i + c_i)$ for some $c_i \in \ZZ_{\ge 0}^k$ and factorizations $a_i, b_i \in \mathsf Z(n_i)$ lying in different connected components of the factorization graph~$\nabla_{n_i}$ of some Betti element~$\beta_i$.  Since $|x|_w$ and $|y|_w$ are sequential in $\mathsf L_w(n)$, there must be some $i$ so that 
$$|x_i|_w \le |x|_w < |y|_w \le |x_{i+1}|_w,$$
and no factorization $z \in \mathsf Z(\beta_i)$ can satisfy $|x|_w < |z + c_i|_w < |y|_w$.  As such, we must have $|y|_w - |x|_w \le \max \Delta_w(\beta_i)$.  This completes the proof.  
\end{proof}

\section{Linear families of numerical semigroups}
\label{sec:linearfamilies}

In the remainder of this manuscript, we examine a particular parametrized family of numerical semigroups, of the form
$$P_n := \<w_1n + r_1, \ldots, w_kn + r_k\>$$
for fixed $r = (r_1, \ldots, r_k) \in \ZZ^k$ and $w = (w_1, \ldots, w_k) \in \ZZ_{\ge 1}^k$.  The main result of this section is Theorem~\ref{t:mesalemma}, which describes the possible minimal generators that can occur for the defining toric ideal of $P_n$ for large $n$.  This result is a generalization of \cite[Theorem 3.4]{shiftyminpres}, which sat at the center of the results in \cite{shiftyminpres,shiftedaperysets} for shifted numerical semigroups (see \cite[Remark~4.10]{shiftyminpres}).  Likewise, Theorem~\ref{t:mesalemma} identifies the key structural changes that occur in $P_n$ for large~$n$ that are central to our results on Betti numbers, minimal presentations (Section~\ref{sec:minpres}) and Frobenius numbers (Section~\ref{sec:aperysets}).  

We begin by imposing some assumptions on $r_1, \ldots, r_k$ and $w_1, \ldots, w_k$, all of which can be made without loss of generality.  

\begin{notation}\label{n:parametrized}
Since $w_1, \ldots, w_k \in \ZZ_{\ge 1}$, we can reparametrize $n$ so $r_1, \ldots, r_k \in \ZZ_{\ge 0}$.  
Reorder $r_1, \ldots, r_k$ (and correspondingly $w_1, \ldots, w_k$) so that $r_1 \le_w \cdots \le_w r_k$, that is, 
$$r_1/w_1 \le \cdots \le r_k/w_k$$
(this is equivalent to Definition~\ref{d:worder} since $w$ has all positive entries).  Note that if $r_i = 0$, then $r_j = 0$ for all $j \le i$ as well.  Define 
$$W = \max\{w_1, \ldots, w_k\}
\qquad \text{and} \qquad
R = \max\{r_1, \ldots, r_k\},$$
and reparametrize $n$ appropriately so that $0 \le r_1 < w_1$.  
\end{notation}

\begin{remark}\label{r:rationalgens}
The proof of Theorem~\ref{t:mesalemma} begins by reparametrizing $P_n$ so that the first generator equals the input parameter.  However, doing so forces the constant terms $t_2, \ldots, t_k$ to be (potentially) rational.  Several times throughout the proof, Lemma~\ref{l:maxminquasi}(b) is carefully applied to the additive subsemigroup $T = \<t_2, \ldots, t_k\> \subset \QQ_{\ge 0}$ in the following sense:\ $T$ can be scaled by a unique rational value $\delta \in \QQ_{> 0}$ to obtain an isomorphic semigroup $\delta T \subset \ZZ_{\ge 0}$ with finite complement.  
\end{remark}

\begin{thm}\label{t:mesalemma}
Let $z$ and $z'$ be factorizations of a Betti element $\beta \in P_n$ in different connected components of $\nabla_\beta$ with $|z|_w > |z'|_w$.  If $n > w_1^2W\!R^2$, then 
\begin{enumerate}[(a)]
\item 
the connected components of $z$ and $z'$ in $\nabla_\beta$ contain every factorization of weighted length $|z|_w$ and $|z'|_w$, respectively; 

\item 
some factorization $y$ with $|z|_w = |y|_w$ has $y_1 > 0$; and

\item 
some factorization $y'$ with $|z'|_w = |y'|_w$ has $y_k' > 0$.  

\end{enumerate}
\end{thm}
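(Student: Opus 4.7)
My plan is to work in the reparametrization of Remark~\ref{r:rationalgens}. Setting $m := w_1 n + r_1$ and $t_i := w_1 r_i - w_i r_1 \ge 0$, one has $w_1(w_i n + r_i) = w_i m + t_i$, so multiplying $\beta = \sum a_i(w_i n + r_i)$ by $w_1$ yields the master identity
\[
w_1\beta \;=\; m\,|a|_w \;+\; \langle a,t\rangle, \qquad \langle a,t\rangle := \textstyle\sum_{i\ge 2} a_i t_i \in T := \<t_2,\ldots,t_k\>.
\]
Since $T$ is independent of $n$ and $|a|_w$ is strictly determined by $\langle a,t\rangle$, the weighted length levels of $\beta$ correspond bijectively to values $c \in T$ satisfying $c \equiv w_1\beta \pmod m$, and the factorizations of $\beta$ at a fixed weighted length $L$ correspond to those $(a_2,\ldots,a_k)\in \mathsf Z_T(c)$ satisfying $\sum a_i w_i \equiv L \pmod{w_1}$, with $a_1 = (L - \sum_{i\ge 2} a_i w_i)/w_1$. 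Crucially, the $w$-ordering on the generators of $P_n$ agrees with the $w^T$-ordering on $(t_2,\ldots,t_k)$ for $w^T := (w_2,\ldots,w_k)$.

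The hypothesis that $z,z'$ lie in different connected components of $\nabla_\beta$ implies the stronger fact that no factorization in the component containing $z$ shares a generator with any factorization in the component containing $z'$. Since $|z|_w > |z'|_w$ forces $\langle z',t\rangle > \langle z,t\rangle$ and both values are congruent to $w_1\beta \pmod m$, their difference is at least $m$; combined with $\langle z',t\rangle \le |z'|_1 \cdot \max_i t_i \le w_1 R\,|z'|_1$ this yields $|z'|_1 \ge m/(w_1 R) \ge n/R$. An analogous argument yields $|y|_1 \ge n/R$ for any factorization $y$ at weighted length $|z|_w$ whose connected component is distinct from that of $z'$.

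For part~(c), if $z'_k > 0$ I take $y' = z'$; otherwise I apply Corollary~\ref{c:maxminquasi}(b) in $T$ with weights $w^T$ and parameter $w_0 = w_1$. The hypothesis $|(z'_2,\ldots,z'_k)|_1 \ge w_1 t_k$ holds because $t_k \le w_1 R$ and $|z'|_1 \ge n/R \ge w_1^2 R$ under $n > w_1^2 W R^2$. The corollary produces $(y'_2,\ldots,y'_k)\in \mathsf Z_T(\langle z',t\rangle)$ with $y'_k > 0$ and $\sum z'_i w_i - \sum y'_i w_i = \kappa w_1$ for some $\kappa \in \ZZ_{\ge 0}$; setting $y'_1 := z'_1 + \kappa$ yields a valid factorization $y' \in \mathsf Z_{P_n}(\beta)$ with $|y'|_w = |z'|_w$ and $y'_k > 0$. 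Part~(b) is proved by the dual construction invoking Corollary~\ref{c:maxminquasi}(a) in $T$: one reduces the $w^T$-weighted length of the tail coordinates of $z$ by a positive multiple of $w_1$ and converts that reduction into a first-coordinate increment, producing a factorization $y$ at weighted length $|z|_w$ with $y_1 > 0$.

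The principal technical obstacle is part~(a). Given any factorization $y$ at weighted length $|z|_w$, applying part~(b) to $y$ and to $z$ separately produces $\tilde y$ and $\tilde z$ at this same weighted length with positive first coordinates; these share the generator $w_1 n + r_1$ and hence are adjacent in $\nabla_\beta$. To conclude I would exhibit adjacency chains from $y$ to $\tilde y$ and from $z$ to $\tilde z$ by unpacking the constructive proof of Corollary~\ref{c:maxminquasi}: each step swaps a subset of the tail coordinates of the current factorization for an increment to its first coordinate, and the size bound $|y|_1 \ge n/R$ together with $n > w_1^2 W R^2$ ensures that every intermediate factorization is non-negative and shares at least one generator with its neighbor. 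Verifying this adjacency property along the entire chain is the crux of the argument; the factor of $W$ in the hypothesis is precisely what provides enough slack in the weighted sums $\sum a_i w_i$ to maintain shared supports at each intermediate step.
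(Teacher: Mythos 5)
Your reparametrization (multiplying through by $w_1$ to get the master identity $w_1\beta = m\,|a|_w + \langle a,t\rangle$ with $t_i \in \ZZ_{\ge 0}$) is essentially the same reduction the paper uses, just cleared of denominators, and the mechanism for parts (b) and (c) --- applying Corollary~\ref{c:maxminquasi} in $T$ with $w_0 = w_1$ and lifting back --- matches the paper's argument. The bound $\sum_{i\ge 2} z_i' \ge n/R$, derived from $\langle z',t\rangle \ge m$, is correct and is the right tool for part~(c).

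The gap is in part~(a), and it is twofold. First, the claimed ``analogous argument'' giving $|y|_1 \ge n/R$ for an arbitrary factorization $y$ at weighted length $|z|_w$ does not work: your bound for $z'$ used $\langle z',t\rangle - \langle z,t\rangle = m(|z|_w - |z'|_w) \ge m$ together with $\langle z,t\rangle \ge 0$, so the lower bound attaches to the factorization of \emph{smaller} weighted length. For $y$ with $|y|_w = |z|_w$ one has $\langle y,t\rangle = \langle z,t\rangle$, which can be as small as $0$, and no lower bound on $\sum_{i\ge 2} y_i$ follows. Second, and more importantly, the assertion that unpacking Corollary~\ref{c:maxminquasi} yields a chain in which ``every intermediate factorization shares at least one generator with its neighbor'' is precisely the nontrivial point, and it can fail: if the swap replaces \emph{all} nonzero tail coordinates, the new factorization is supported only in position $1$ and positions $\ge j$, while $z$ may be supported entirely in positions $2,\ldots,j-1$ (those with $t_i = 0$), in which case the two factorizations share no generator. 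The paper handles exactly this degenerate case by checking that $y_1 > W$ and inserting an explicit bridging factorization $y - w_i e_1 + w_1 e_i$ for some $2 \le i \le j-1$ with $z_i > 0$; your proposal identifies this as the crux but does not supply the bridge, and the appeal to ``the factor of $W$ provides enough slack'' does not by itself produce it.

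There is also a smaller integrality point worth flagging: when you set $y_1' = z_1' + \kappa$ you implicitly use that the weighted-length deficit coming from Corollary~\ref{c:maxminquasi} is an integer multiple of $w_1$, which the corollary does provide (this is why $w_0 = w_1$ is used); but when lifting an arbitrary factorization of an element of $T$ back to $\mathsf Z_{P_n}(\beta)$ one must also check the first coordinate lands in $\ZZ_{\ge 0}$, as the correspondence is not surjective (cf.\ Example~\ref{e:uglymapping}). Your setup handles this correctly for (b) and (c), but a complete proof of (a) must carry the same check through every link of the proposed chain.
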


\begin{proof}
Let $m = w_1n + r_1$ so that
$$\begin{array}{r@{}c@{}l}
P_n
&{}={}& \big\<m, \, \tfrac{w_2}{w_1}m + \big(r_2 - w_2\tfrac{r_1}{w_1}\big), \, \ldots, \, \tfrac{w_k}{w_1}m + \big(r_k - w_k\tfrac{r_1}{w_1}\big)\big\> \\[0.2em]
&{}={}& \<m, \, v_2 m + t_2, \, \ldots, \, v_k m + t_k\>,
\end{array}$$
where each $t_i = r_i - w_i\tfrac{r_1}{w_1} \ge r_i - w_i\tfrac{r_i}{w_i} = 0$ and $v_i = \tfrac{w_i}{w_1}$.  With this notation, we see $r_i \le_w r_j$ implies
$$\frac{t_i}{v_i}
= \frac{w_1r_i - w_ir_1}{w_i}
= w_1\frac{r_i}{w_i} - r_1
\le w_1\frac{r_j}{w_j} - r_1
= \frac{w_1r_j - w_jr_1}{w_j}
= \frac{t_j}{v_j}.$$
In particular, this implies (i) $t_1 = \cdots = t_{j-1} = 0$ for some $j < r$, and (ii) $t_j \le_v \cdots \le_v t_k$, viewing $v = (v_j, \ldots, v_k)$ as a weight vector for $T = \<t_j, \ldots, t_k\>$.  For simplicity, given $t \in T$ and $a \in \mathsf Z_T(t)$, we write 
$$|z|_v = v_1z_1 + \cdots + v_kz_k \qquad \text{and} \qquad |a|_v = v_ja_j + \cdots + v_ka_k$$
throughout the remainder of the proof.  The key observation is that
$$\beta - |z|_v m 
= z_1m + \sum_{i = j}^k z_i (v_i m + t_i) - z_1m - m \sum_{i = j}^k v_iz_i
= \sum_{i = j}^k z_i t_i
$$
yields a natural mapping of each factorization of $\beta \in P_n$ of weighted length $\ell$ to some factorization of $\beta - \ell m \in T$ of weighted length at most $\ell$.  Let 
$$a = (z_j, \ldots, z_k) \in \mathsf Z_T(\beta - |z |_v m) \qquad \text{and} \qquad a' = (z_j', \ldots, z_k') \in \mathsf Z_T(\beta - |z'|_v m)$$
denote the factorizations in $T$ corresponding to $z$ and $z'$, respectively.  First, we claim some factorization in the same connected component of $\nabla_{\beta}$ as $z'$ has positive last coordinate. If $z_k' > 0$, then the claim is proven, so suppose $z'_k = 0$.  
Since $w \in \ZZ_{\ge 1}^k$,
$$\begin{array}{r@{}c@{}l}
\beta - |z'|_v m
&{}={}& \displaystyle
\beta - \tfrac{1}{w_1}|z'|_w m
\ge \beta - \tfrac{1}{w_1}\big(|z|_w - 1\big)m
= \tfrac{1}{w_1}m + \beta - |z|_v m
\ge \displaystyle
\tfrac{1}{w_1}m
\ge n.
\end{array}$$
By assumption, $n > w_1^2\!R^2$, so writing $\delta \in \QQ_{> 0}$ for the unique rational value such that $\delta T \subset \ZZ_{\ge 0}$ has finite complement, this implies 
$$a_j' + \cdots + a_k' \ge \tfrac{1}{R}(a_j't_j + \cdots + a_k't_k) = \tfrac{1}{R}(\beta - |z'|_v m) \ge \tfrac{1}{R}n > \tfrac{1}{R}w_1^2R^2 \ge w_1\delta t_k,$$
and thus Corollary~\ref{c:maxminquasi}(b) implies some factorization with positive last coordinate and weighted length having integer difference from $|a'|_v$ can be obtained from $a'$ by replacing all but at least one generator with copies of $t_k$.  In particular, this factorization is in the same connected component as $a'$.  Moreover, Corollary~\ref{c:maxminquasi}(b) implies some factorization $a'' \in \mathsf Z_T(\beta - |z'|_v m)$ whose weighted length is minimal among those satisfying $|a'|_v - |a''|_v \in \ZZ$ has $a_k'' > 0$.  
Under the above factorization mapping, the factorization $z'' = (|z'|_v - |a''|_v, 0, \ldots, 0, a_j'', \ldots, a_k'') \in \mathsf Z_{P_n}(\beta)$ corresponds to $a''$ since
$$(|z'|_v - |a''|_v)m + \sum_{i = j}^k a_i''(v_i m + t_i) = (|z'|_v - |a''|_v)m + |a''|_v m + (\beta - |z'|_v m) = \beta.$$
The factorization $z''$ is thus in the same connected component of $\nabla_{\beta}$ as $z'$ and has $z_k'' > 0$, so the claim is proved.  

Since $z$ and $z'$ are in different connected components of $\nabla_{\beta}$, we must have $z_k = 0$.  This means $a_j + \cdots + a_k < w_1 \delta t_k$, as otherwise the above argument would yield a factorization of $\beta$ with positive last coordinate that is connected to $z$ in $\nabla_{\beta}$.  Writing $V = \max(v_1, \ldots, v_k) = W/w_1$, the assumption $n > w_1^2W\!R^2$ implies
$$\begin{array}{r@{}c@{}l}
|z|_v 
&{}>{}& \displaystyle |z'|_v \ge |a'|_v = \sum_{i = j}^k v_ia_i' = \sum_{i = j}^k \frac{v_i}{t_i}a_i't_i \ge \frac{v_k}{t_k}\sum_{i = j}^k a_i't_i = \frac{v_k}{t_k}(\beta - |z'|_v m) > \frac{v_k}{t_k} w_1^2W\!R^2 \\
&{}={}& \displaystyle \frac{w_k}{t_k}w_1W\!R^2 \ge w_1W\!R \ge w_1t_kW = w_1^2t_kV \ge w_1\delta t_kV > V \sum_{i = j}^k a_i \ge \sum_{i = j}^k v_ia_i = |a|_v,
\end{array}$$
so the factorization $y = (|z|_v - |a|_v, 0, \ldots, 0, a_j, \ldots, a_k)$ proves part~(b).  
Additionally, either $y$ is connected to $z$ in $\nabla_\beta$, or $z_1 = 0$ and $z_j = \cdots = z_k = 0$.  In the latter case, the preceeding inequalities imply $y_1 = |z|_v > w_kW\!R \ge W$, so one of the factorizations 
$$y - w_i e_1 + w_1 e_i \in \mathsf Z_{P_n}(\beta) \qquad \text{for} \qquad 1 \le i \le j - 1$$
yields a path from $y$ to $z$ in $\nabla_\beta$ since one of the values $z_2, \ldots, z_{j-1}$ must be positive.  This proves the first half of part~(a).  

Lastly, suppose $z_k' = 0$.  The above argument yielded a factorization $z''$ in the same connected component as $z'$ with $z_k'' > 0$ and corresponding factorization $a''$ having minimal weighted length.  Since $z$ and $z'$ are in different connected components of~$\nabla_\beta$, the first half of part~(a) implies $z_1' = z_1'' = \cdots = z_{j-1}' = z_{j-1}'' = 0$ and thus $|a'|_v = |a''|_v$.  
This proves part~(c), and applying the arguments thus far to any factorization of weighted length $|z'|_w$ yields a path in $\nabla_\beta$ to $z'$ through $z''$.  This completes the proof.  
\end{proof}

\begin{example}\label{e:positivecoords}
In Theorem~\ref{t:mesalemma}(c), we cannot ensure that all choices of factorizations $z$ and $z'$ has positive first and last coordinates, respectively.  Indeed, if $r = (0,0,2,3)$ and $w = (5,7,2,3)$, then $\beta = 1980$ is a Betti element of $P_{44}$ with 
$$(0, 0, 22, 0), \quad (0, 0, 19, 2), \quad (0, 0, 9, 0), \quad \text{and} \quad (0, 0, 2, 5)$$
among its factorizations.  The key is that in the proof of Theorem~\ref{t:mesalemma}, there are ties in the $w$-ordering for both first and last place.  As a consequence of Theorem~\ref{t:minpresmap}, this phenomenon also occurs for a Betti element of $P_{44+15m}$ for each $m \ge 0$.  
\end{example}

\begin{example}\label{e:uglymapping}
In the proof of Theorem~\ref{t:mesalemma}, the natural mapping from factorizations of $\beta \in P_n$ of weighted length $\ell$ to factorizations of $\beta - \ell m \in T$ of weighted length at most $\ell$ need not be injective nor surjective.  Let $r = (0,0,5,7,9)$ and $w = (2,3,5,7,8)$.  Certainly, the factorizations $(8,0,0,0,0)$ and $(2,4,0,0,0)$ of $\beta = 704 \in P_{44}$ are mapped to the same factorization of $0 \in T = \<\frac{5}{2}, \frac{7}{2}, \frac{9}{2}\>$.  What is perhaps more subtle is that $\beta = 1620$ has 2 factorizations, namely
$$\mathsf Z_{P_{44}}(1620) = \{(0, 0, 3, 3, 0), (2, 0, 0, 0, 4)\}$$
but the corresponding element $18 \in T$ has factorizations
$$\mathsf Z_T(18) = \{(3, 3, 0), (4, 1, 1), (0, 0, 4)\}$$
and the second does not correspond to any factorizations of $\beta$.  The issue is that $v = (1,\frac{3}{2},\frac{5}{2},\frac{7}{2},4)$, so $a = (4,1,1)$ has non-integral weighted length $|a|_v = \frac{35}{2}$, so it is impossible to fill the first or second coordinates of a corresponding factorization of $\beta$ to obtain the necessary weighted length.  This is why, when constructing factorizations of $\beta$ from factorizations of elements of $T$ at several locations in the proof of Theorem~\ref{t:mesalemma}, we must ensure that the first coordinate (a weighted length difference) is integral.  
\end{example}


Now we can state a generalization of \cite[Corollary 3.5]{shiftyminpres} and \cite[Corollary 5.7]{shiftyminpres} which follows from Theorem~\ref{t:mesalemma}.  

\begin{cor}\label{c:bettidelta}
If $n > w_1^2W\!R^2$, then $\Delta_w(P_n) = \{d\}$, where
$$d = 
\gcd(w_1, \ldots, w_{j-1}, \min \Delta_w(S))\gcd(S)
$$
with $r_{j-1} = 0 < r_j$ and $S = \<r_j, \ldots, r_k\>$.  
\end{cor}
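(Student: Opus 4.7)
The plan is to use Theorem~\ref{t:weighteddelta} to reduce the claim to a pair of inequalities and then invoke Theorem~\ref{t:mesalemma} for the structural content. By Theorem~\ref{t:weighteddelta}(a), $\Delta_w(P_n) \subseteq d_0\ZZ_{\ge 1}$ where $d_0 = \min \Delta_w(P_n)$, so establishing $\min \Delta_w(P_n) = d$ and $\max \Delta_w(P_n) \le d$ forces $\Delta_w(P_n) = \{d\}$.

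For the minimum, I would compute $\min \Delta_w(P_n)$ directly via Theorem~\ref{t:weighteddelta}(b). Writing $g_i = w_i n + r_i$ for the generators of $P_n$, the quantities $w_i g_\ell - w_\ell g_i = w_i r_\ell - w_\ell r_i$ are independent of $n$. I would partition the pairs $(i,\ell)$ with $1 \le i < \ell \le k$ by the threshold index $j$: both indices below $j$ contribute zero; one below and one at or above $j$ contributes $w_i r_\ell$; both at or above $j$ contribute the defining generators of $\min \Delta_w(S)$ via Theorem~\ref{t:weighteddelta}(b) applied to $S = \<r_j, \ldots, r_k\>$. The middle class has gcd $\gcd(w_1, \ldots, w_{j-1}) \cdot \gcd(S)$ and the last class has gcd $\min \Delta_w(S)$; combining these (using that $\gcd(S)$ divides $\min \Delta_w(S)$, immediate from Theorem~\ref{t:weighteddelta}(b)) yields $d$.

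For the upper bound, I would apply Theorem~\ref{t:weighteddelta}(c) to reduce to showing $\max \Delta_w(\beta) \le d$ for each Betti element $\beta \in \Betti(P_n)$. Fix such a $\beta$ and factorizations $z, z' \in \mathsf Z(\beta)$ with weighted lengths $|z'|_w < |z|_w$ consecutive in $\mathsf L_w(\beta)$. By Theorem~\ref{t:mesalemma}(a), any two factorizations of the same weighted length lie in a common component, so if $z$ and $z'$ are in different components then parts~(b) and~(c) yield extremal representatives $y, y'$ at those weighted lengths with $y_1 > 0$ and $y'_k > 0$. I would then produce a factorization of $\beta$ at weighted length $|z|_w - d$ by substituting one copy of $g_1$ in $y$ for a Bezout-style integer combination of $g_2, \ldots, g_k$ whose weighted length contribution exceeds $w_1$ by exactly $d$; since $|z|_w$ and $|z'|_w$ are consecutive in $\mathsf L_w(\beta)$, this forces $|z|_w - |z'|_w = d$. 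The same-component case is handled analogously, by traversing a path from $z$ to $z'$ in $\nabla_\beta$ and performing the substitution at a suitable intermediate vertex.

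The main obstacle will be producing the intermediate factorization whose weighted length change is exactly $d$ (rather than a larger multiple thereof) while keeping all coordinates non-negative. The required Bezout combination must simultaneously package the three contributions $\gcd(w_1, \ldots, w_{j-1})$, $\gcd(S)$, and $\min \Delta_w(S)$ identified in the first step into a single atomic step of size $d$, and non-negativity relies on the hypothesis $n > w_1^2 WR^2$ together with Corollary~\ref{c:maxminquasi} to ensure that the coordinates of $y$ are large enough to absorb the substitution.
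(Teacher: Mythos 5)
Your overall strategy (compute $\min \Delta_w(P_n)$ from Theorem~\ref{t:weighteddelta}(b), then bound $\max \Delta_w(P_n)$ using parts~(a) and~(c)) matches the paper, and your computation of the minimum is essentially identical to the paper's: partition the pairs $(i,i')$ by the threshold $j$ and note that $w_i g_{i'} - w_{i'} g_i = w_i r_{i'} - w_{i'} r_i$ is independent of~$n$.

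For the upper bound, however, your route diverges and has a genuine gap. The ``Bezout-style substitution'' is not a well-founded step: there is no reason a single copy of the first generator $g_1 = w_1n+r_1$ can be traded for an integer combination of $g_2,\ldots,g_k$ with controlled coefficients, and the coefficients in any Bezout identity for the generators of $P_n$ grow with $n$, so ``the coordinates of $y$ are large enough to absorb the substitution'' is exactly the kind of claim that needs proof and is not supplied by Corollary~\ref{c:maxminquasi} (which only produces a factorization with a positive extremal coordinate and controlled weighted length, not with uniformly large coordinates). The paper avoids this entirely by exploiting the same correspondence used in the proof of Theorem~\ref{t:mesalemma}: a factorization $z \in \mathsf Z_{P_n}(m)$ of weighted length $\ell$ satisfies $m - \ell(w_1n+r_1) = \sum z_i r_i \in S$, so to produce an intermediate factorization at weighted length $|z'|_w + d$ one simply checks that $m - (|z'|_w+d)(w_1n+r_1)$ lies in $S$ --- which holds for $n > w_1^2W\!R^2$ because it is a positive multiple of $\gcd(S)$ well beyond the Frobenius number of $S$ --- and then lifts any factorization of that element of $S$ back to $P_n$. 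No Bezout identity, no coordinate bookkeeping, and the same-component case needs no separate treatment. I would recommend replacing your substitution step with this lift-from-$S$ construction; the hypothesis $n > w_1^2W\!R^2$ is used precisely to guarantee that the target element of $S$ is representable, not to make coordinates of $y$ large.
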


\begin{proof}
Since 
$$w_{i'}(w_in + r_i) - w_{i}(w_{i'}n + r_{i'}) = w_{i'}r_i - w_ir_{i'}$$
for any $i, i' \le k$, 
we have
$$\begin{array}{r@{}c@{}l}
\min\Delta_w(P_n)
&{}={}& \gcd(\{w_ir_{i'} : 1 \le i < j \le i' \le k\} \cup \{w_ir_{i'} - w_{i'}r_i : j \le i < i' \le k\}) \\
&{}={}& \gcd(w_1, \ldots, w_{j-1}, \min \Delta_w(S))\gcd(r_1, \ldots, r_k),
\end{array}$$
so the first claim follows from Theorem~\ref{t:weighteddelta}(b).  

Applying Theorem~\ref{t:weighteddelta}(c), we will show if two factorizations $z, z'\in \mathsf Z(m)$ satisfy $|z|_w - |z'|_w \ge 2d$, then $z$ and $z'$ must be in the same connected component of $\nabla_m$.  Let $\ell = |z|_w - |z'|_w$.  Just as in the proof of Theorem~\ref{t:mesalemma}, we know
$$m - |z|_wn = z_1r_1 + \cdots + z_kr_k \in S,$$
so since $n > w_1^2W\!R^2$, we have 
$$(m - |z|_wn) + (\ell - d)n
= (m - |z'|_w n - \ell n) + (\ell - d)n 
= m - (|z'|_w + d)n \in S.$$
Any factorization of the above element of $S$ corresponds to a factorization $z'' \in \mathsf Z(m)$ with $|z''|_w = |z'|_w + d$ that is connected to both $z$ and $z'$ in $\nabla_m$ by Theorem~\ref{t:mesalemma}.  
\end{proof}



\section{Minimal presentations of parametrized semigroups}
\label{sec:minpres}

Let $\pi_n:\ZZ_{\ge 0}^k \to P_n$ denote the map
$$\pi_n(z) = \sum_{i = 1}^k z_i(w_in + r_i) = |z|_w n + \sum_{i = 1}^k z_ir_i,$$
called the \emph{factorization homomorphism} of $P_n$.  The equivalence relation $\ker \pi_n$  on $\ZZ_{\ge 0}^k$, called the \emph{kernel congruence}, is given by $(z,z') \in \ker\pi_n$ whenever $\pi_n(z) = \pi_n(z')$, (that is, when $z$ and $z'$ are factorizations for the same element in $P_n$).  Here, $\ker \pi_n$ is a congruence since it is closed under \emph{translation}, that is, $(z + u, z' + u) \in \ker \pi_n$ for every $(z,z') \in \ker\pi_n$ and $u \in \ZZ_{\ge 0}^k$.  

A minimal presentation (Definition~\ref{d:minpres}) of a given semigroup $T$ encodes a particular choice of minimal relations (or \emph{trades}) between the generators of $T$.  
They are one of the fundamental tools with which to study the factorization structure of numerical semigroups, and are closely connected to the defining toric ideal of $T$ (Remark~\ref{r:minprestoricideal}).   
For a thorough introduction, we refer the reader to \cite[Chapter~9]{fingenmon} and \cite[Chapter~7]{numerical}.  

The results in this section generalize those in \cite{shiftyminpres}, where a special (unweighted) case of the parametrization defining $P_n$ is considered.  
At the heart of the main results in~\cite{shiftyminpres} is a map between kernel congruences, used to establish a correspondence between minimal presentations for large $n$ that restricts to a bijection on Betti elements.  Our analogous map, $\Phi_n$, is defined in Proposition~\ref{p:mapwelldefined}, and its key properties (which closely mirror those in \cite{shiftyminpres}) are given in Proposition~\ref{p:mapproperties}.  The main results are Theorem~\ref{t:minpresmap} and Corollary~\ref{c:bettiquasi}, which establish periodicity results for the minimal presentations and Betti elements of $P_n$, respectively, for large $n$.  For our more general parametrization, the period turns out to be
$$p = w_1r_k - w_kr_1,$$
which specializes to a period of $r_k$ when $w_1 = 1$ and $r_1 = 0$ (as in \cite{shiftyminpres}).  


In this section, we omit several proofs that are nearly identical to those in \cite{shiftyminpres}, including only those aspects that are different in our more general setting.  

\begin{defn}\label{d:minpres}
Fix a numerical semigroup $T = \<t_1, \ldots, t_k\>$ and let $\pi:\ZZ_{\ge 0}^k \to T$ denote the factorization homomorphism of $T$.  A \emph{presentation} for $T$ is a set of relations $\rho \subset \ker \pi$ such that $\ker \pi$ is the unique minimal (w.r.t.\ containment) congruence on $\ZZ_{\ge 0}^k$ containing $\rho$.  Equivalently, between any two factorizations $(z, z') \in \ker \pi$, there exists a \emph{chain} $a_0, a_1, \ldots, a_r$ with $a_0 = z$, $a_r = z'$, and 
$$(a_{i-1},a_i) = (b_i,b_i') + (u_i,u_i) \in \ker \pi$$
for some $(b_i, b_i') \in \rho$ and $u_i \in \ZZ_{\ge 0}^k$ for each $i \le r$.  We say $\rho$ is \emph{minimal} if it is minimal with respect to containment among all presentations of $T$.  
\end{defn}

\begin{remark}\label{r:minprestoricideal}
Returning to the commutative algebra viewpoint in Remark~\ref{r:bettinumbers}, minimal presentations encode minimal generating sets of toric ideals.  Let $T = \<t_1, \ldots, t_k\>$, and write $I = \ker\varphi$ for the defining toric ideal of $T$, where $\varphi$ is the ring homomorphism 
$$\begin{array}{r@{}c@{}l}
\varphi:\CC[x_1, \ldots, x_k] &{}\to{}& \CC[y] \\
x_i &{}\mapsto{}& y^{t_i}.
\end{array}$$
Each relation $(a, b) \in \ker\pi$ corresponds to a binomial 
$$x_1^{a_1} \cdots x_k^{a_k} - x_1^{b_1} \cdots x_k^{b_k} \in I,$$
and each minimal presentation of $T$ corresponds to some minimal generating set of $I$.  As an example, if $T = \<6, 9, 20\>$, then the minimal presentations of $T$ are
$$\begin{array}{ll}
\{((3,0,0), (0,2,0)), ((10,0,0), (0,0,3))\}, & \{((3,0,0), (0,2,0)), ((7,2,0), (0,0,3))\}, \\
\{((3,0,0), (0,2,0)), ((\phantom{0}4,4,0), (0,0,3))\}, & \{((3,0,0), (0,2,0)), ((1,6,0), (0,0,3))\}, 
\end{array}$$
each of which corresponds to one of the 4 minimal generating sets of the defining toric ideal $I \subset \CC[x,y,z]$ listed in Remark~\ref{r:bettinumbers}.  
\end{remark}

\begin{example}\label{e:minpresmap}
Let $r = (1,2,4,6)$ and $w = (3,4,6,9)$, and consider the following minimal presentations for $P_n$ with $n$ identical modulo $p = 3 \cdot 6 - 9 \cdot 1 = 9$.  
$$
\begin{array}{
r@{\,\,\,}
l@{}r@{\,\,}r@{\,\,}r@{\,\,}r@{\,}r@{\,\,}r@{\,\,}r@{\,}r@{}l@{\,\,\,\,}
l@{}r@{\,\,}r@{\,\,}r@{\,\,}r@{\,}r@{\,\,}r@{\,\,}r@{\,}r@{}l@{\,\,\,\,}
}
P_{506}: 
& (( & 0, & 0, & 3, & 0), & (0, & 0, & 0, & 2 & )), 
& (( & 0, & 3, & 0, & 0), & (2, & 0, & 1, & 0 & )), 
\\
& (( & 506, & 1, & 0, & 0), & (0, & 0, & 0, & 169 & )), 
& (( & 508, & 0, & 0, & 0), & (0, & 2, & 2, & 167 & ))
\\[0.5em]

P_{515}: 
& (( & 0, & 0, & 3, & 0), & (0, & 0, & 0, & 2 & )), 
& (( & 0, & 3, & 0, & 0), & (2, & 0, & 1, & 0 & )), 
\\
& (( & 515, & 1, & 0, & 0), & (0, & 0, & 0, & 172 & )), 
& (( & 517, & 0, & 0, & 0), & (0, & 2, & 2, & 170 & ))
\\[0.5em]

P_{524}: 
& (( & 0, & 0, & 3, & 0), & (0, & 0, & 0, & 2 & )), 
& (( & 0, & 3, & 0, & 0), & (2, & 0, & 1, & 0 & )), 
\\
& (( & 524, & 1, & 0, & 0), & (0, & 0, & 0, & 175 & )), 
& (( & 526, & 0, & 0, & 0), & (0, & 2, & 2, & 173 & ))

\end{array}
$$
Each first-row relation $(z,z')$ satisfies $|z|_w = |z'|_w$, and each second-row relation $(z,z')$ satisfies $|z| = |z'| + 1$.  In the latter case, each time $n$ is increased by $p = 9$, the value of $z_1$ increases by $w_4 = 9$ and $z_4'$ increases by $w_1 = 3$.  
\end{example}

\begin{defn}\label{d:monotonechain}
A chain $a_0, a_1, \ldots, a_r$ of factorizations is \emph{$w$-monotone} if the sequence $|a_0|_w, |a_1|_w, \ldots, |a_r|_w$ is monotone.  
\end{defn}

\begin{prop}\label{p:mapwelldefined}
The map $\Phi_n \colon \ker \pi_n \to \ker \pi_{n + p}$ given by
$$\Phi_n(z,z')
= \left\{\begin{array}{ll}
(z + \ell w_k e_1, z' + \ell w_1 e_k) & \text{if } |z|_w > |z'|_w \\
(z + \ell w_1 e_k, z' + \ell w_k e_1) & \text{if } |z|_w < |z'|_w \\
(z,z')                                & \text{if } |z|_w = |z'|_w
\end{array}\right.
$$
for $(z,z') \in \ker \pi_n$ and  $\ell = \big| |z|_w - |z'|_w \big|$ is well defined.
\end{prop}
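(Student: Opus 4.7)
The plan is to verify by direct algebraic computation that $\Phi_n(z, z') \in \ker \pi_{n+p}$ whenever $(z, z') \in \ker \pi_n$; since $\Phi_n(z,z')$ is unambiguously defined once $(z,z')$ is fixed, this containment is the entire content of well-definedness. Throughout I would use that $\pi_m(x) = |x|_w m + \sum_i x_i r_i$ for each $x \in \ZZ_{\ge 0}^k$ and that $p = w_1 r_k - w_k r_1$ by definition.

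The equal-length case $|z|_w = |z'|_w$ is immediate: the identity $\pi_n(z) = \pi_n(z')$ reduces to $\sum_i z_i r_i = \sum_i z'_i r_i$, and combining this with the equal weighted lengths gives $\pi_{n+p}(z) = \pi_{n+p}(z')$ as well, so $\Phi_n(z,z') = (z,z')$ is a relation at parameter $n+p$.

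For $|z|_w > |z'|_w$, I would set $\ell = |z|_w - |z'|_w > 0$; the hypothesis $\pi_n(z) = \pi_n(z')$ then rewrites as $\sum_i z'_i r_i - \sum_i z_i r_i = \ell n$. Expanding each side of the shifted relation as a weighted-length contribution plus a constant-coefficient contribution, I would compute
\begin{align*}
&\pi_{n+p}(z + \ell w_k e_1) - \pi_{n+p}(z' + \ell w_1 e_k) \\
&\quad= \bigl(|z|_w + \ell w_1 w_k - |z'|_w - \ell w_1 w_k\bigr)(n+p) + \Bigl(\sum_i z_i r_i - \sum_i z'_i r_i\Bigr) + \ell(w_k r_1 - w_1 r_k) \\
&\quad= \ell(n+p) - \ell n - \ell p = 0,
\end{align*}
where the final line uses $p = w_1 r_k - w_k r_1$. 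Hence $\Phi_n(z,z') \in \ker \pi_{n+p}$. The case $|z|_w < |z'|_w$ is obtained by swapping the roles of $z$ and $z'$ in the preceding argument.

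There is no real obstacle here; the only subtlety is verifying that $p$ has been chosen precisely so that padding the heavier-length side by $\ell w_k$ copies of the first generator and the lighter-length side by $\ell w_1$ copies of the last generator both add $\ell w_1 w_k$ to the weighted length (preserving the difference $\ell$) while contributing a net constant $-\ell p$ that exactly cancels the effect of promoting $n$ to $n+p$. This bookkeeping is precisely why $p$ takes the specific form $w_1 r_k - w_k r_1$ rather than some other combination of the data.
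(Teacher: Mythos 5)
Your proof is correct and follows essentially the same route as the paper's: a direct algebraic verification that $\Phi_n(z,z') \in \ker\pi_{n+p}$, hinging on the fact that $p = w_1 r_k - w_k r_1$ is chosen so that the shifts by $\ell w_k e_1$ and $\ell w_1 e_k$ add the same amount $\ell w_1 w_k$ to the weighted length while their constant contributions $\ell w_k r_1$ and $\ell w_1 r_k$ differ by exactly $\ell p$. Writing $\pi_m(x) = |x|_w m + \sum_i x_i r_i$ and computing the single difference $\pi_{n+p}(z + \ell w_k e_1) - \pi_{n+p}(z' + \ell w_1 e_k)$ is a slightly cleaner bookkeeping device than the paper's chain of equalities, but the content is identical.
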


\begin{proof}
Fix $(z,z') \in \ker \pi_n$ with $z = (z_1, \dots, z_k)$ and $z' = (z_1', \dots, z_k')$.  By symmetry, we can assume that $\ell = |z|_w - |z'|_w \ge 0$.  Now, we simply use $\pi_n(z) = \pi_n(z')$ to verify 
$$\begin{array}{r@{}c@{}l}
\displaystyle \pi_{n + p}(z + \ell w_ke_1)
&{}={}& \displaystyle
\ell w_k r_1 + \sum_{i = 1}^k z_i(w_i(n + p) + r_i)
= \pi_n(z) + |z|_wp + \ell w_k r_1
\\ &{}={}& \displaystyle
\pi_n(z') + |z'|_wp + \ell(p + w_k r_1)
= \pi_n(z') + |z'|_wp + \ell w_1 r_k
\\ &{}={}& \displaystyle
\ell w_1 r_k + \sum_{i = 1}^k z_i'(w_i(n + p) + r_i)
= \pi_{n + p}(z' + \ell w_1e_k),
\end{array}$$
as desired.  
\end{proof}

\begin{prop}\label{p:mapproperties}
Fix $n \in \ZZ_{\ge 1}$, $\rho \subset \ker \pi_n$, and $(z, z') \in \rho$, and let $(y, y') = \Phi_n(z, z')$.  
\begin{enumerate}[(a)]
\item
\label{p:mapproperties_injective}
The map $\Phi_n$ is injective.  

\item 
\label{p:mapproperties_lendiffs}
The map $\Phi_n$ preserves weighted length differences: $|z|_w - |z'|_w = |y|_w - |y'|_w$.  

\item 
\label{p:mapproperties_closures}
The map $\Phi_n$ preserves the reflexive, symmetric, and translation closure operations: if $\rho$ is reflexive, symmetric, or closed under translation, then so is $\Phi_n(\rho)$.  

\item 
\label{p:mapproperties_monotonechains}
The map $\Phi_n$ preserves $w$-monotone chain connectivity: if $\rho$ is translation-closed and there exists a $w$-monotone $\rho$-chain from $z$ to $z'$, then there exists a $w$-monotone $\Phi_n(\rho)$-chain from $y$ to $y'$.  

\end{enumerate}
\end{prop}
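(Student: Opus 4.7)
The plan is to address the four parts in the order stated, since (a) and (c) follow quickly once (b) is established, and (d) is the main obstacle because it requires building an explicit interpolating chain in $\Phi_n(\rho)$.

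For part~(b), I would verify directly that in the case $|z|_w > |z'|_w$ the two components of $\Phi_n(z,z')$ shift by $\ell w_k w_1$ and $\ell w_1 w_k$ in weighted length, which are equal and cancel in the difference; the other two cases are symmetric or trivial.  With this, part~(a) is immediate: the length difference determines which branch of $\Phi_n$ was applied, and within a branch one inverts by subtracting $\ell w_k e_1$ or $\ell w_1 e_k$ in the appropriate coordinate, so the two preimages $(z,z')$ and $(w,w')$ must coincide.

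For part~(c), I would observe that each closure operation commutes with the case analysis of $\Phi_n$. Reflexivity places $(z,z)$ in the fixed-point branch, so $\Phi_n(z,z) = (z,z)$. Symmetry swaps the first two branches, so $\Phi_n(z',z)$ is the swap of $\Phi_n(z,z')$. For translation closure, $|z+u|_w - |z'+u|_w = |z|_w - |z'|_w$ forces the same branch with the same $\ell$, giving $\Phi_n(z+u, z'+u) = \Phi_n(z,z') + (u,u)$.

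The difficult part is (d).  Given a $w$-monotone $\rho$-chain $z = a_0, a_1, \ldots, a_r = z'$, which I may assume is non-decreasing by reversing if necessary, with steps $(a_{i-1}, a_i) = (b_i, b_i') + (u_i, u_i)$ and $\ell_i := |b_i'|_w - |b_i|_w \ge 0$ summing to $\ell = |z'|_w - |z|_w$, the natural idea is to replace each generating relation $(b_i, b_i')$ by $\Phi_n(b_i, b_i')$ and absorb the resulting coordinate discrepancy into a new translation.  The correct interpolation turns out to be
$$a_i' := a_i + (\ell_1 + \cdots + \ell_i)\, w_k e_1 + (\ell_{i+1} + \cdots + \ell_r)\, w_1 e_k,$$
so that $a_0' = z + \ell w_1 e_k = y$ and $a_r' = z' + \ell w_k e_1 = y'$.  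A bookkeeping check would then confirm $(a_{i-1}', a_i') = \Phi_n(b_i, b_i') + (u_i', u_i')$ with $u_i' = u_i + (\ell_1 + \cdots + \ell_{i-1}) w_k e_1 + (\ell_{i+1} + \cdots + \ell_r) w_1 e_k \in \ZZ_{\ge 0}^k$, so the sequence is a genuine $\Phi_n(\rho)$-chain, and the identity $|a_i'|_w = |a_i|_w + \ell w_1 w_k$ shows that $w$-monotonicity is preserved because the weighted lengths are shifted by a common constant.  The main subtlety is guessing the interpolation formula: it ``transfers'' the length-balancing correction piece-by-piece along the chain, swapping a $w_1 e_k$ term for a $w_k e_1$ term at step $i$ in proportion to the local length difference $\ell_i$.
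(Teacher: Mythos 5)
Your proofs of parts~(a)--(c) are correct and routine: the weighted-length shift in both components of $\Phi_n(z,z')$ is $\ell w_1 w_k$, so the difference is preserved; this pins down $\ell$ and the branch, giving injectivity; and each closure operation manifestly commutes with the case analysis as you describe. The paper itself defers the whole proof to the analogous result in the shifted-monoids paper, so your write-out supplies details the paper omits, and the interpolation formula
$$a_i' := a_i + (\ell_1 + \cdots + \ell_i)\, w_k e_1 + (\ell_{i+1} + \cdots + \ell_r)\, w_1 e_k$$
for part~(d) is exactly the right construction. Your verification that $a_0' = y$, $a_r' = y'$, that each step differs from $\Phi_n(b_i,b_i')$ by the translation $u_i' \in \ZZ_{\ge 0}^k$, and that $|a_i'|_w = |a_i|_w + \ell w_1 w_k$ (so monotonicity is preserved) are all correct.

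The one point to tighten is the claim that you ``may assume the chain is non-decreasing by reversing if necessary.'' Reversing a $\rho$-chain $a_0, \ldots, a_r$ to $a_r, \ldots, a_0$ requires replacing each step $(b_i, b_i') \in \rho$ by $(b_i', b_i)$, and there is no guarantee that $(b_i', b_i) \in \rho$ unless $\rho$ is symmetric --- which part~(d) does not assume. The clean fix is not to reverse at all: if the chain is non-increasing set $\ell_i^+ := |a_{i-1}|_w - |a_i|_w \ge 0$ and use the mirror-image interpolation
$$a_i' := a_i + (\ell_1^+ + \cdots + \ell_i^+)\, w_1 e_k + (\ell_{i+1}^+ + \cdots + \ell_r^+)\, w_k e_1,$$
which plays the same role with the roles of $w_1 e_k$ and $w_k e_1$ swapped to match the first branch of $\Phi_n$. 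With that substitution for the decreasing case (and the constant case being trivial since then $\ell_i = 0$ throughout), the argument is complete.

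Two small remarks. First, you never actually invoke the translation-closure hypothesis on $\rho$ in part~(d); the definition of a $\rho$-chain already permits arbitrary translation vectors $u_i \in \ZZ_{\ge 0}^k$, so your argument works with or without it. Second, in part~(a) you should note explicitly that the two nontrivial branches cannot produce the same image: by part~(b) a common image would force $|z|_w - |z'|_w = |a|_w - |a'|_w$, which is positive in one branch and negative in the other.
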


\begin{proof}
The proof is nearly identical to that of \cite[Proposition~4.4]{shiftyminpres}.  
\end{proof}

%

Just as in \cite{shiftyminpres}, the main obstruction to Theorem~\ref{t:minpresmap} for arbitrary $n$ is that $\Phi_n$ need only preserve connectivity by $w$-monotone chains.  By Proposition~\ref{p:monotonechain}, any two factorizations $(z,z') \in \ker \pi_n$ are guaranteed to be connected by a $w$-monotone chain if $n$ is large enough.  

\begin{prop}\label{p:monotonechain}
Fix $n > w_1^2W\!R^2$ and a minimal presentation $\rho \subseteq \ker \pi_n$.  There exists a $w$-monotone $\rho$-chain between any $(z,z') \in \ker \pi_n$.
\end{prop}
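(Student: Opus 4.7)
The plan is to adapt the proof strategy used for shifted numerical semigroups in \cite{shiftyminpres}, replacing standard factorization length by its weighted version throughout and invoking Theorem~\ref{t:mesalemma} wherever the shifted case uses \cite[Theorem~3.4]{shiftyminpres}. By the symmetry in the definition of a $w$-monotone chain, I may assume $|z|_w \ge |z'|_w$ and aim to produce a monotone decreasing $\rho$-chain from $z$ to $z'$. Setting $m = \pi_n(z) = \pi_n(z')$, Corollary~\ref{c:bettidelta} gives $\Delta_w(P_n) = \{d\}$, so $\mathsf L_{P_n,w}(m)$ is an arithmetic progression with common difference $d$, and in particular $|z|_w - |z'|_w$ is a nonnegative integer multiple of $d$.

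The argument would proceed in two stages. First, I would establish an auxiliary \emph{level-set connectivity} claim: for any two factorizations $y,y'$ of a common element of $P_n$ satisfying $|y|_w = |y'|_w$, there exists a $\rho$-chain from $y$ to $y'$ in which every factorization has weighted length exactly $|y|_w$. Beginning from any $\rho$-chain between $y$ and $y'$, each step whose underlying relation $(b,b')\in\rho$ satisfies $|b|_w \ne |b'|_w$ sits at a Betti element $\beta = \pi_n(b)$, and Theorem~\ref{t:mesalemma}(a) forces all factorizations of $\beta$ with a given extremal weighted length to lie in a single connected component of $\nabla_\beta$. This rigidity allows each deviation away from weighted length $|y|_w$ to be rerouted through factorizations of the target weighted length.

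Second, I would induct on the nonnegative integer $(|z|_w - |z'|_w)/d$. The base case is precisely the level-set connectivity claim. For the inductive step, I start with any $\rho$-chain from $z$ to $z'$ and locate the first step that strictly decreases weighted length, say at index $i$, so that $|a_{i-1}|_w = |z|_w$ and $|a_i|_w < |z|_w$. Level-set connectivity then supplies a chain from $z$ to $a_{i-1}$ of constant weighted length $|z|_w$, to which the descent step $(a_{i-1}, a_i)$ is appended; the inductive hypothesis applied to the pair $(a_i, z')$ yields a $w$-monotone decreasing chain that is concatenated to finish.

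The key obstacle is that the descent $|a_{i-1}|_w \to |a_i|_w$ may a priori undershoot the target, leaving $|a_i|_w < |z'|_w$ and rendering the inductive hypothesis inapplicable to $(a_i, z')$. To address this I would argue that the descent step can always be taken to decrease weighted length by exactly $d$: at the Betti element governing the step, Theorem~\ref{t:mesalemma}(b),(c) classify factorizations with prescribed extremal weighted length, and together with the structure of adjacent components bridged by any minimal presentation, this allows the descent relation originally used along the $\rho$-chain to be swapped for an alternative cross-component relation whose weighted length drop is exactly $d$. Combined with level-set connectivity to access this alternative relation from $a_{i-1}$ without leaving weighted length $|z|_w$, the refined descent produces $|a_i|_w = |z|_w - d \ge |z'|_w$, at which point the inductive hypothesis applies. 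This refinement, mirroring the analogous step in the unweighted shifted case, is the technically delicate heart of the argument.
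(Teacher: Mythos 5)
Your two-stage outline has the right shape, but the central ``level-set connectivity'' claim --- which is where all the real work lies --- is asserted rather than proved, and the mechanism you sketch for it does not actually produce a chain. Saying that Theorem~\ref{t:mesalemma}(a) ``forces all factorizations of $\beta$ with a given extremal weighted length to lie in a single connected component'' and that ``this rigidity allows each deviation \ldots to be rerouted'' describes the desired conclusion, not an argument: a deviation at step $i$ uses a relation $(b_i,b_i')\in\rho$ in which $b_i$ and $b_i'$ lie in \emph{different} components of the factorization graph, and part~(a) alone provides no path between them at a fixed weighted length. What the paper actually does is a genuine double induction: the outer one on chain length to reduce to a ``bump'' (a subchain whose endpoints are at a common level $\ell$ and whose interior is flat at $\ell\pm d$), and then an \emph{inner induction on the semigroup element $\pi_n(z)$}. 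This inner step is the engine: one applies Theorem~\ref{t:mesalemma}(b) and~(c) (not part~(a)) to obtain, from each endpoint, a factorization of equal weighted length that shares a coordinate with the endpoint and also has \emph{positive last coordinate}; one then strips the last generator from both replacements and recurses on the strictly smaller element. Your proposal never surfaces this descent on $\pi_n(z)$, and without it ``rerouting'' has no engine to run on.

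Two smaller issues. First, your step index is chosen incorrectly: you pick the first step that strictly \emph{decreases} weighted length and then assert $|a_{i-1}|_w = |z|_w$, but a chain may climb above $|z|_w$ before ever decreasing, in which case $|a_{i-1}|_w > |z|_w$. The correct choice is the first $i$ with $|a_i|_w < |z|_w$; since increments lie in $\{-d,0,d\}$, that step necessarily descends from $|z|_w$ to $|z|_w - d$. Second, your ``undershoot'' worry is unfounded precisely because Corollary~\ref{c:bettidelta} gives $\Delta_w(P_n)=\{d\}$, so every step changes weighted length by $0$ or $\pm d$, and $|z|_w - d \ge |z'|_w$ whenever $|z|_w > |z'|_w$; the fix you propose (swapping the descent relation for one whose drop is ``exactly $d$'') is thus addressing a problem that cannot occur, and in any case does not follow from Theorem~\ref{t:mesalemma}(b),(c) as you invoke them. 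The essential missing ingredient is the inner induction on $\pi_n(z)$ powered by the positive-last-coordinate conclusions of Theorem~\ref{t:mesalemma}(b),(c).
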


\begin{proof}
Without loss of generality, assume $\gcd(z,z') = 0$.  Since $\Cong(\rho) = \ker \pi_n$, there exists a chain $z = a_0, a_1, \ldots, a_r = z'$ of factorizations such that for each $i < r$, we have 
$$(a_i,a_{i+1}) = (b_i,b_i') + (u_i,u_i), \qquad (b_i, b_i') \in \rho, u_i \in \ZZ_{\ge 0}^k,$$
where $b_i$ and $b_i'$ occur in distinct connected components of the graph $\nabla\!_\beta$ of $\beta = \pi_n(b_i)$.  By Corollary~\ref{c:bettidelta}, $\Delta_w(P_n) = \{d\}$, so $|a_i|_w - |a_{i-1}|_w \in \{-d,0,d\}$ for each $i \le r$.  

By induction on the chain length $r$, it suffices to assume $|a_1|_w = \cdots = |a_{r-1}|_w$ and $|z|_w = |a_0|_w = |a_r|_w = |z'|_w$, and prove there exists a weighted length preserving chain from $z$ to $z'$.  Indeed, any non-monotone chain must contain such a subchain, which could then be ``flattened'' to a subchain with all equal weighted lengths.  

First, suppose $|a_1|_w = |a_0|_w + d$.  Applying Theorem~\ref{t:mesalemma} to $(b_0,b_0')$ and $(b_{r-1},b_{r-1}')$, we see that $z$ and $z'$ share support with some factorizations $y$ and $y'$, respectively, with positive last coordinates and weighted lengths equal to $|z|_w = |z'|_w$.  By induction on the semigroup element $\pi_n(z)$, there exist weighted length preserving chains connecting $z$ to $y$, $y$ to $y'$, and $y'$ to $z'$.  The case $|a_1|_w = |a_0|_w - d$ follows similarly.  
\end{proof}

\begin{thm}\label{t:minpresmap}
For any $n > w_1^2W\!R^2$, the image of any minimal presentation $\rho$ of $P_n$ under the map $\Phi_n:\ker \pi_n \to \ker \pi_{n + p}$ is a minimal presentation of $P_{n + p}$.  
\end{thm}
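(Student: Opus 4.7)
The plan is to adapt the argument of \cite[Theorem~4.7]{shiftyminpres} to our weighted setting, using the tools developed in this section and the preceding one. The key observation is that $n + p > w_1^2W\!R^2$ as well, so Proposition~\ref{p:monotonechain} and Theorem~\ref{t:mesalemma} apply in $P_{n+p}$. The argument naturally splits into showing (i) $\Phi_n(\rho)$ generates $\ker \pi_{n+p}$ as a congruence, and (ii) $\Phi_n(\rho)$ is minimal with this property.

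For (i), I would first show that $\Phi_n(\ker\pi_n) \subseteq \Cong(\Phi_n(\rho))$. Given any $(z,z') \in \ker\pi_n$, Proposition~\ref{p:monotonechain} applied in $P_n$ gives a $w$-monotone $\rho$-chain from $z$ to $z'$, which Proposition~\ref{p:mapproperties}(\ref{p:mapproperties_monotonechains}) transports to a $w$-monotone $\Phi_n(\rho)$-chain from $y$ to $y'$, where $(y,y') = \Phi_n(z,z')$. I would then finish by showing every $(y, y') \in \ker \pi_{n+p}$ reduces via $\Phi_n(\rho)$-moves to a pair in the image of $\Phi_n$. Fixing a minimal presentation $\sigma$ of $P_{n+p}$ and applying Proposition~\ref{p:monotonechain} in $P_{n+p}$, the pair $(y,y')$ is connected by a $w$-monotone $\sigma$-chain, so it suffices to handle each Betti relation $(a, a') \in \sigma$ separately. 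Applying Theorem~\ref{t:mesalemma} within the connected components of $\nabla_{\pi_{n+p}(a)}$ containing $a$ and $a'$ (with $|a|_w > |a'|_w$ and $\ell = |a|_w - |a'|_w$) yields factorizations $\tilde a, \tilde a'$ with positive first and last coordinates, respectively, in those same components; the intra-R-class equivalences $(a,\tilde a)$ and $(a',\tilde a')$ are already in $\Cong(\Phi_n(\rho))$ by the first half, so the problem reduces to producing a representative pair of the form $\Phi_n(z, z')$.

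For (ii), I would use a counting argument. By Proposition~\ref{p:mapproperties}(\ref{p:mapproperties_injective}), $\Phi_n$ is injective, so $|\Phi_n(\rho)| = |\rho|$. The cardinality of any minimal presentation is the semigroup invariant $\sum_\beta (c(\beta) - 1)$, where the sum ranges over Betti elements $\beta$ and $c(\beta)$ is the number of connected components of $\nabla_\beta$. The constructions used in part~(i) naturally produce a bijection between Betti-element-with-R-class-pair data for $P_n$ and $P_{n+p}$, forcing the two invariants to agree and giving minimality.

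The main obstacle is locating representatives of the form $\Phi_n(z,z')$ within each relevant R-class pair in part~(i). Theorem~\ref{t:mesalemma} gives factorizations with positive first or last coordinate, but $\Phi_n$ actually requires at least $\ell w_k$ copies of the first generator and $\ell w_1$ copies of the last. I expect this to follow by examining the quantitative output of Theorem~\ref{t:mesalemma}'s proof, where the factorization $y$ produced has $y_1 = |z|_v - |a|_v$ with the bound $|z|_v > V\sum a_i$ supplied by the strong hypothesis $n > w_1^2W\!R^2$; this should force $y_1$ to exceed $\ell w_k$ directly, and symmetrically for the last coordinate. If these bounds fall short in edge cases, I would iterate Theorem~\ref{t:mesalemma} together with R-class-preserving moves (of the type $w_i e_1 - w_1 e_i$ used at the end of its proof) to accumulate further first- or last-coordinate mass before pulling back through~$\Phi_n$.
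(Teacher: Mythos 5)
Your proposal follows the paper's proof essentially step for step: reduce an arbitrary $(y,y')\in\ker\pi_{n+p}$ to Betti relations via Proposition~\ref{p:monotonechain}, dispose of the case $|y|_w=|y'|_w$ using Proposition~\ref{p:mapproperties}\eqref{p:mapproperties_monotonechains} (noting that such pairs already lie in $\ker\pi_n$ and are fixed by $\Phi_n$), and handle the case $|y|_w=|y'|_w+d$ by using Theorem~\ref{t:mesalemma}(b),(c) together with part~(a) to locate factorizations $x,x'$ in the right components from which the relation can be pulled back through $\Phi_n$. You are, if anything, more scrupulous than the paper itself in flagging that writing $\Phi_n(x-dw_ke_1,\,x'-dw_1e_k)=(x,x')$ requires $x_1\ge dw_k$ and $x_k'\ge dw_1$ rather than merely positive coordinates (the paper asserts ``$x_1\ge d$,'' which is neither what Theorem~\ref{t:mesalemma} states nor what the formula needs); this gap is genuine, and your proposed remedy of extracting the actual lower bound from the proof of Theorem~\ref{t:mesalemma}, where the hypothesis $n>w_1^2W\!R^2$ makes $y_1=|z|_v-|a|_v$ large, is the correct way to close it.
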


\begin{proof}
We must show that any minimal presentation $\rho \subset \ker \pi_n$ of $P_n$ satisfies
$$\Cong(\Phi_n(\rho)) = \ker \pi_{n + p},$$
that is, the image of $\rho$ under $\Phi_n$ is a presentation for $P_{n + p}$.  Fix $(y,y') \in \ker \pi_{n + p}$, and let $m = \pi_{n + p}(y)$.  By Proposition~\ref{p:monotonechain}, there exists a $w$-monotone $\rho$-chain from $y$ to $y'$, which we can assume is $w$-monotone decreasing by Proposition~\ref{p:mapproperties}\eqref{p:mapproperties_closures}.  We can also assume each step in this chain has the form $(b,b') + (u,u)$ for some $u \in \ZZ_{\ge 0}^k$ and $b, b' \in \mathsf Z(\beta)$ lying in different connected components of $\nabla\!_\beta$.  By Proposition~\ref{p:mapproperties}\eqref{p:mapproperties_closures}, it suffices to prove each $(b,b')$ lies in $\Cong(\Phi_n(\rho))$, so we can assume $y$ and $y'$ lie in different connected components of $\nabla\!_m$.  

First, if $|y|_w = |y'|_w$, then $\Phi_n(y,y') = (y,y')$ by Proposition~\ref{p:mapwelldefined}, so applying $\Phi_n$ to any $w$-monotone (in this case, length preserving) $\rho$-chain from $y$ to $y'$ yields a $\Phi_n(\rho)$-chain from $y$ to $y'$ by Proposition~\ref{p:mapproperties}\eqref{p:mapproperties_monotonechains}.  In particular, $\Phi_n(\rho)$ connects any two factorizations of equal weighted length.  On the other hand, if $|y|_w > |y'|_w$, then Corollary~\ref{c:bettidelta} implies $|y|_w = |y'|_w + d$, where $\Delta_w(P_n) = \{d\}$.  By Theorems~\ref{t:mesalemma}(b) and~(c), some factorizations $x$ and $x'$ in the same connected components as $y$ and $y'$, respectively, satisfy $x_1 \ge d$ and $x_k' \ge d$.  Since
$$\Phi_n(x - dw_ke_1, x' - dw_1e_k) = (x,x'),$$
$\Phi_n(\rho)$ connects $x$ and $x'$ in $\nabla_m$.  As Theorem~\ref{t:mesalemma}(a) implies there are length-preserving chains from $y$ to $x$ and from $x'$ to $y'$, Proposition~\ref{p:mapproperties}\eqref{p:mapproperties_monotonechains} completes the proof.  
\end{proof}

We are now ready to prove that the Betti elements of $P_n$ are eventually periodic.  

\begin{cor}\label{c:bettiquasi}
For $n > w_1^2W\!R^2$, the map $\varphi_n: \Betti(P_n) \to \Betti(P_{n+p})$ given by
$$\beta \mapsto \left\{\begin{array}{@{\,}ll}
\beta + \lambda p & \text{if } \mathsf L_{P_n,w}(\beta) = \{\lambda\} \\
\beta + \lambda p + dw_1(w_kn + r_k) & \text{if } \mathsf L_{P_n,w}(\beta) = \{\lambda, \lambda + d\}
\end{array}\right.$$
is a bijection, where $\Delta_w(P_n) = \{d\}$.  
\end{cor}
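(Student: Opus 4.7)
The plan is to realize $\varphi_n$ as the map on Betti elements induced by $\Phi_n$ from Proposition~\ref{p:mapwelldefined}, then read off the stated formula by explicit computation. Fix a minimal presentation $\rho$ of $P_n$; by Theorem~\ref{t:minpresmap}, $\Phi_n(\rho)$ is a minimal presentation of $P_{n+p}$. Because the Betti elements of a numerical semigroup are precisely the shared $\pi$-images of pairs in any minimal presentation, the assignment $\varphi_n(\beta) := \pi_{n+p}(\Phi_n(z,z'))$, for any $(z,z') \in \rho$ with $\pi_n(z) = \beta$, automatically takes values in $\Betti(P_{n+p})$ as soon as it is shown to be well defined.

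To derive the explicit formula, Corollary~\ref{c:bettidelta} gives $\Delta_w(P_n) = \{d\}$, so any such pair satisfies $\bigl||z|_w - |z'|_w\bigr| \in \{0,d\}$. Assuming WLOG $|z|_w \ge |z'|_w$ and writing $\lambda$ for the minimum weighted length of $\beta$: if $|z|_w = |z'|_w = \lambda$, then $\Phi_n$ acts trivially and $\varphi_n(\beta) = \pi_{n+p}(z) = \pi_n(z) + \lambda p = \beta + \lambda p$; if $|z|_w - |z'|_w = d$, then expanding $\pi_{n+p}(z + d w_k e_1) = \beta + (\lambda + d) p + d w_k(w_1(n+p) + r_1)$ and simplifying via the identity $p = w_1 r_k - w_k r_1$ reproduces the second branch of the formula. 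To make both the case split exhaustive and the value independent of the chosen representative pair, I would apply Theorem~\ref{t:mesalemma}(a) to every pair of factorizations of $\beta$ in distinct components of $\nabla_\beta$; this forces $\mathsf L_{P_n,w}(\beta) \in \bigl\{\{\lambda\}, \{\lambda, \lambda+d\}\bigr\}$ and forces every pair in $\rho$ representing $\beta$ to span the same two (or one) extreme weighted-length component(s) of $\nabla_\beta$.

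For bijectivity, surjectivity is immediate: any $\beta' \in \Betti(P_{n+p})$ equals $\pi_{n+p}(y)$ for some $(y,y') \in \Phi_n(\rho)$, and by injectivity of $\Phi_n$ (Proposition~\ref{p:mapproperties}(\ref{p:mapproperties_injective})) this pair is $\Phi_n(z,z')$ for a unique $(z,z') \in \rho$, giving $\beta := \pi_n(z)$ with $\varphi_n(\beta) = \beta'$. For injectivity, the partition identity $\beta_1(P_{n+p},\beta') = \sum_{\beta \in \varphi_n^{-1}(\beta')} \beta_1(P_n,\beta)$ follows from counting pairs in $\Phi_n(\rho)$ by Betti image on both sides (using $|\rho| = |\Phi_n(\rho)|$ from the injectivity of $\Phi_n$); combined with the observation---extracted from the well-definedness analysis---that a collision $\varphi_n(\beta_1) = \varphi_n(\beta_2)$ with $\beta_1 \ne \beta_2$ would require two distinct pairs in $\rho$ with distinct $\pi_n$-images to share a $\pi_{n+p}$-image under $\Phi_n$, a scenario incompatible with the weighted-length structure forced by Corollary~\ref{c:bettidelta}, this rules out $|\varphi_n^{-1}(\beta')| > 1$.

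The main obstacle I anticipate is the ``$|\mathsf L_{P_n,w}(\beta)| \le 2$'' step: Theorem~\ref{t:mesalemma} controls only factorizations at the extreme weighted lengths of $\nabla_\beta$, so ruling out intermediate-weighted-length components (which would yield a third value in $\mathsf L_{P_n,w}(\beta)$) requires iterated pairwise application of Theorem~\ref{t:mesalemma}(a), supplemented by the $w$-monotone chain structure of Proposition~\ref{p:monotonechain} and the minimality of $\rho$. Once this is handled, the remaining arguments are largely formal.
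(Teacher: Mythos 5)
Your approach is essentially the same as the paper's: realize $\varphi_n$ as the map on Betti elements induced by $\Phi_n$, invoke Theorem~\ref{t:minpresmap} so that $\pi_{n+p}(\Phi_n(z,z')) \in \Betti(P_{n+p})$, and use Corollary~\ref{c:bettidelta} together with Theorem~\ref{t:mesalemma}(a) to control $\mathsf L_{P_n,w}(\beta)$. The paper's proof does exactly this, albeit very tersely. Your treatment of bijectivity is more elaborate than the paper's one-line appeal to Theorem~\ref{t:minpresmap}, but the underlying idea (track minimal-presentation pairs through the injective $\Phi_n$ and observe that the resulting map on $\pi$-images is forced by the explicit formula) is sound.

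There is, however, a real gap in your case-$d$ calculation. You assert that expanding $\pi_{n+p}(z + dw_k e_1) = \beta + (\lambda+d)p + dw_k\bigl(w_1(n+p) + r_1\bigr)$ and using $p = w_1 r_k - w_k r_1$ ``reproduces the second branch of the formula.'' If you actually carry out that simplification, you get $\beta + \lambda p + dw_1\bigl(w_k(n+p) + r_k\bigr)$, which differs from the corollary's displayed value $\beta + \lambda p + dw_1(w_k n + r_k)$ by an extra $dw_1 w_k p$. The discrepancy is a typo in the paper (whose own proof writes $\Phi_n(z,z') = (z + de_1, z' + de_k)$, misquoting Proposition~\ref{p:mapwelldefined}; the additive constant ought to involve the largest generator of $P_{n+p}$, not of $P_n$). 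A careful computation should have flagged this rather than silently agreeing with the stated formula; an explicit check that the two coordinates of $\Phi_n(z,z')$ yield the same $\pi_{n+p}$-image, for instance, immediately gives the corrected expression. The bijection claim itself survives, since the map sending $\beta$ to the common $\pi_{n+p}$-image of $\Phi_n(z,z')$ is well defined and correct regardless of how its value is expressed.
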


\begin{proof}
Fix $z, z' \in \mathsf Z_{P_n}(\beta)$ with $|z|_w = \max\mathsf L_{P_n,w}(\beta)$ and $|z'|_w = \min\mathsf L_{P_n,w}(\beta)$.  In the first case, $\lambda = |z|_w = |z'|_w$, so $\Phi_n(z,z') = (z,z')$, so Theorem~\ref{t:minpresmap} implies 
$$\varphi_n(\beta) = \beta + \lambda p = \sum_{i = 1}^k z_i(w_in + r_i) + \sum_{i = 1}^k w_iz_i p = \sum_{i = 1}^k z_i(w_i(n + p) + r_i) \in \Betti(P_{n+p}).$$
In the second case, $\lambda = |z|_w - d = |z'|_w$ by Corollary~\ref{c:bettidelta}, so $\Phi_n(z,z') = (z + de_1, z' + de_k)$, and thus Theorem~\ref{t:minpresmap} implies
$$\varphi_n(\beta) = \beta + \lambda p + dw_1(w_kn + r_k) = dw_1(w_kn + r_k) + \sum_{i = 1}^k z_i'(w_i(n + p) + r_i) \in \Betti(P_{n+p}).$$
As such, $\varphi_n$ is a bijection by Theorem~\ref{t:minpresmap}.  This completes the proof.  
\end{proof}

\begin{remark}\label{r:bettidichotomy}
Just as in \cite{shiftyminpres}, Corollary~\ref{c:bettiquasi} implies the elements of $\Betti(M_n)$ fall into two distinct categories:\ those with minimal relations of equal length (which increase linearly in $n$ upon successive applications of $\Phi_n$), and those with minimal relations of different length (which increase quadratically in $n$ upon successive applications of $\Phi_n$).  

As an additional consequence of Corollary~\ref{c:bettiquasi}, we see the function $n \mapsto |\!\Betti(P_n)|$ is $p$-periodic for $n > w_1^2W\!R^2$, including if the elements of $\Betti(P_n)$ are counted with multiplicity (that is,~if each element $\beta \in \Betti(P_n)$ appears once for each relation between factorizations of $\beta$ occuring in a minimal presentation for $P_n$).  In the commutative algebra language of Remarks~\ref{r:bettinumbers} and~\ref{r:minprestoricideal}, this says the number of minimal generators of the defining toric ideal of $P_n$ is $p$-periodic in $n$.  
\end{remark}

\section{Apery sets and the Frobenius number}
\label{sec:aperysets}

In the final section of this paper, we examine the Frobenius number (Corollary~\ref{c:frobquasi}), genus (Corollary~\ref{c:genusquasi}), and type (Corollary~\ref{c:pseudofrobshifted}) of $P_n$ for large $n$.  Our results utilize the Ap\'ery set (Definition~\ref{d:apery}) of $P_n$, from which each of these quantities can be quickly obtained (indeed, in numerical semigroup computations, one often computes the Ap\'ery set first since doing so has roughly the same computational complexity).  Most of the results in this section generalize those in \cite{shiftedaperysets}.  

Throughout this section, we add the following assumptions on the parametrization of $P_n$; the difficulties in the general case are discussed in Remark~\ref{r:aperysetrestriction}.  

\begin{notation}\label{n:parametrizedapery}
Throughout this section, we restrict to the case $w_1 = 1$ (and consequently $r_1 = 0$), so that 
$$P_n = \<n, w_2n + r_2, \ldots, w_kn + r_k\>,$$
in addition to all existing assumptions from Notation~\ref{n:parametrized}.  Moreover, let $S = \<r_2, \ldots, r_k\>$ and $d = \gcd(S)$.  
\end{notation}

\begin{defn}\label{d:apery}
Fix an additive subsemigroup $T \subset (\ZZ_{\ge 0}, +)$, and let $d = \gcd(T)$.  The~\emph{Ap\'ery set} of $m \in T$ is
$$\Ap(T; m) = \{t \in T : t - m \in \ZZ \setminus T\}.$$
The \emph{genus} of $T$ is the number $\mathsf g(T) = |d\ZZ_{\ge 0} \setminus T|$ of positive integer multiples of $d$ lying outside of $T$, and the \emph{Frobenius number} of $T$ is the largest integer multiple of $d$ outside of $T$, that is, $\mathsf F(T) = \max(d\ZZ_{\ge 0} \setminus T)$.  
\end{defn}

\begin{remark}\label{r:aperyfacts}
The quantities in Definition~\ref{d:apery} are usually only defined for numerical semigroups (that is, in the case when $d = 1$).  We will make use here of the following properties of the Ap\'ery set, each of which follows immediately from a known result in the usual setting~\cite{numerical}.  
\begin{enumerate}[(a)]
\item 
Each element of $\Ap(T; m)$ is distinct modulo $m$.  In particular, $\left|\Ap(T; m)\right| = m/d$.  

\item 
We have
$$\mathsf F(S) = \max(\Ap(T; m)) - m
\qquad \text{and} \qquad 
\mathsf g(S) = \sum_{t \in \Ap(T; m)} \bigg\lfloor \frac{t}{m} \bigg\rfloor,
$$
known in the literature as Selmer's formulas \cite{selmersformula}.  

\end{enumerate}
\end{remark}

\begin{example} \label{e:apery}
If $T = \<6, 9, 20\>$, then $\Ap(T; 6) = \{0, 49, 20, 9, 40, 29\}$, where the elements are listed based on their equivalence class modulo $6$.  From Selmer's formulas in Remark~\ref{r:aperyfacts}, we conclude $\mathsf F(S) = 43$ and $\mathsf g(S) = \tfrac{147}{6} - \tfrac{5}{2} = 22$.  
\end{example}

\begin{thm}\label{t:parametrizedapery}
If $n > W\!R^2$, then
$$\Ap(P_n;n) = \{i + \mathsf m_{S,w}(i)n \mid i \in \Ap(S;dn)\}.$$
Moreover, we have
$$\mathsf L_{P_n,w}(i + \mathsf m_{S,w}(i)n) = \{\mathsf m_{S,w}(i)\}$$
for each $i \in \Ap(S;dn)$.  
\end{thm}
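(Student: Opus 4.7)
The plan is to establish the set equality and the length-set claim in three stages: the inclusion $\supseteq$, the inclusion $\subseteq$ (which yields $L = \mathsf m_{S,w}(i)$ as a byproduct), and uniqueness of $i$ for a given $t$. The structural observation used throughout is that for $t \in \Ap(P_n;n)$, every factorization $z \in \mathsf Z_{P_n}(t)$ must satisfy $z_1 = 0$, since otherwise $z - e_1$ factors $t - n$, contradicting Apery. I will also use that $n > R^2$ forces $dn \in S$ (as $R^2$ dominates the Frobenius number of the numerical semigroup $S/d$), so arbitrary non-negative multiples of $dn$ are freely available as $S$-summands.

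For $\supseteq$, I fix $i \in \Ap(S;dn)$ and set $t := i + \mathsf m_{S,w}(i) n$. Extending any minimum weighted length $S$-factorization of $i$ by zero first coordinate exhibits $t \in P_n$. To show $t - n \notin P_n$, I suppose a factorization $z \in \mathsf Z_{P_n}(t - n)$ exists, introduce $i' := \sum_{j \ge 2} z_j r_j$ and $\ell := |z|_w + 1 - \mathsf m_{S,w}(i)$, and rearrange to get $i - i' = \ell n$. Three cases: $\ell = 0$ forces $(z_2,\ldots,z_k)$ to be an $S$-factorization of $i$ of weighted length strictly below $\mathsf m_{S,w}(i)$, contradicting minimality; $\ell \ge 1$ gives $i - \ell n \in S$, and offsetting by copies of $dn \in S$ produces $i - dn \in S$, contradicting the Apery property of $i$; the hardest case $\ell \le -1$ gives an $S$-factorization of $i + |\ell|n$ of weighted length at most $\mathsf m_{S,w}(i) - 1 - |\ell|$, which I intend to rule out via the quasi-linear growth of $\mathsf m_{S,w}$ from Theorem~\ref{t:maxminquasi}(b).

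For $\subseteq$, I fix $t \in \Ap(P_n;n)$ with a factorization $z$ (so $z_1 = 0$) and set $L := |z|_w$ and $i := \sum_{j \ge 2} z_j r_j \in S$. If $L > \mathsf m_{S,w}(i)$, prepending $(L - \mathsf m_{S,w}(i))$ copies of the first generator to a minimum $S$-factorization of $i$ produces a factorization of $t$ with positive first coordinate, contradicting Apery; hence $L = \mathsf m_{S,w}(i)$. If $i - dn \in S$, combining a minimum factorization of $i - dn$ with $L + d - 1 - \mathsf m_{S,w}(i-dn)$ copies of the first generator gives a factorization of $t - n$ in $P_n$; the first coordinate is non-negative thanks to the monotonicity $\mathsf m_{S,w}(i - dn) \le \mathsf m_{S,w}(i)$ for large arguments, again via Theorem~\ref{t:maxminquasi}(b); this contradicts Apery, so $i \in \Ap(S;dn)$. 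For uniqueness of $i$, any two factorizations of $t$ yield $i,i' \in \Ap(S;dn)$ with $i - i' \in n\ZZ \cap d\ZZ$; under the coprimality of $n$ and $d$ needed for the cardinalities of $\Ap(P_n;n)$ and $\Ap(S;dn)$ to be compatible, this forces $i \equiv i' \pmod{dn}$, and distinctness of $\Ap(S;dn)$ representatives modulo $dn$ forces $i = i'$, hence $L = L'$, giving $\mathsf L_{P_n,w}(t) = \{\mathsf m_{S,w}(i)\}$.

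The main obstacle is the quasi-linear comparison used in the $\ell \le -1$ case of $\supseteq$ and in the monotonicity estimate of $\subseteq$. Theorem~\ref{t:maxminquasi}(b) only directly handles increments by $r_k$, so to control increments of the form $|\ell|n$ or $dn$, the plan is to iterate Theorem~\ref{t:maxminquasi}(b) and write $\mathsf m_{S,w}(N) = (w_k/r_k) N + \varphi(N)$ for $N > R^2$, with $\varphi$ bounded (by $O(WR^2)$) and $r_k$-periodic; the hypothesis $n > WR^2$ is then exactly what makes the slope contribution $(w_k/r_k) |\ell| n$ dominate the bounded correction $\varphi$ and the small term $|\ell| + 1$, closing the contradictions in both directions.
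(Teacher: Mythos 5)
Your approach is genuinely different from the paper's: the paper deduces from Theorem~\ref{t:mesalemma} that $\mathsf L_{P_n,w}(a)$ is a singleton for every $a \in \Ap(P_n;n)$, then uses the natural bijection (special to the case $w_1=1$) between $\{z \in \mathsf Z_{P_n}(a) : |z|_w = \ell\}$ and $\{s \in \mathsf Z_S(a-\ell n) : |s|_w \le \ell\}$ together with a cardinality count, whereas you bypass Theorem~\ref{t:mesalemma} entirely and prove both inclusions by direct inequality manipulations with $\mathsf m_{S,w}$. That is a legitimate alternative plan, and your handling of the $\ell = 0$ and $\ell \ge 1$ cases, the forward inclusion, and the coprimality-based uniqueness step are all sound (modulo the $\gcd(n,d)=1$ hypothesis, which is indeed forced by the cardinality count and which the paper also uses tacitly).

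The gap is in the step you yourself flag as the main obstacle, and your proposed fix does not close it. You write $\mathsf m_{S,w}(N) = (w_k/r_k)N + \varphi(N)$ for $N > R^2$ with $\varphi$ ``bounded by $O(WR^2)$,'' and then assert that $n > WR^2$ makes the slope term $(w_k/r_k)|\ell|n$ dominate $\varphi$. But $(w_k/r_k)|\ell|n$ is only guaranteed to be at least $n/R$, so with a bound $\varphi \le CWR^2$ you would need $n/R \gtrsim WR^2$, i.e.\ $n \gtrsim WR^3$ — strictly worse than the stated threshold. To make your route actually give $n > WR^2$, you need the sharper bound $\varphi = O(WR)$ (which does hold, by writing $N = qr_k + N_0$ with $N_0 \in \Ap(S;r_k)$, using $\max\Ap(S;r_k) = \mathsf F(S) + r_k < r_2 r_k$ and estimating $\mathsf m_{S,w}(N_0) \le W N_0 / r_2 < Wr_k \le WR$), but you neither state nor prove this; the $O(WR^2)$ bound you do state is insufficient. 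The same issue arises in your $\subseteq$ direction when $i - dn$ may lie below $R^2$, outside the quasi-linear regime, where you again need an $O(WR)$-type upper bound on $\mathsf m_{S,w}(i-dn)$ rather than the crude $W(i-dn)/r_2$. As written, the proposal proves the theorem only under a threshold roughly $WR^3$, not the claimed $WR^2$.
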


\begin{proof}
Fix $a \in \Ap(n)$.  Since $a - n \notin P_n$, no factorization of $a$ has positive first coordinate, so Theorem~\ref{t:mesalemma} implies $\mathsf L_{P_n,w}(a) = \{\ell\}$ for some $\ell \in \ZZ_{\ge 0}$.  Let $i = a - \ell n$.  

The proof of Theorem~\ref{t:mesalemma} establishes a natural mapping
$$\begin{array}{rcl}
\{z\in \mathsf Z_{P_n}(a) : |z|_w = \ell\} & \rightarrow & \{s \in \mathsf Z_{S}(a - \ell n): |s|_w \le \ell\} \\
(z_0,z_1, \ldots, z_k) & \mapsto & (z_1, \ldots, z_k)
\end{array}$$
between factorizations of $a \in P_n$ and factorizations of $i = a - \ell n \in S$ of weighted length at most $\ell$.  Moreover, in this setting, the above map is a bijection, since for any factorization $(z_2, \ldots, z_k) \in \mathsf Z_S(i)$,  letting $z_1 = \ell - |z|_w$ yields a factorization $(z_1, \ldots, z_k) \in \mathsf Z_{P_n}(a)$.  Since $a \in \Ap(P_n; n)$, no factorization of $a$ has positive first coordinate, so we must have 
$$\ell = |z|_w = \mathsf m_{S,w}(a - \ell n) = \mathsf m_{S,w}(i).$$
Observing that $|\!\Ap(P_n; n)| = |\!\Ap(S; dn)| = n$ and that the elements of $\Ap(P_n; n)$ are all distinct modulo $n$ completes the proof.  
\end{proof}

\begin{remark}\label{r:aperysetrestriction}
The primary difficulty in generalizing Theorem~\ref{t:parametrizedapery} to the general setting considered in Sections~\ref{sec:linearfamilies} and~\ref{sec:minpres} is the ``non-surjectivity'' demonstrated in Example~\ref{e:uglymapping}.  The mapping utilized in the proof of Theorem~\ref{t:parametrizedapery} is indeed a specialization of the one established in the proof of Theorem~\ref{t:mesalemma}, but it specializes to a bijection in this case (i.e., when $w_1 = 1$).  
\end{remark}






Generalizations of \cite[Corollaries~4.2 and~4.3]{shiftedaperysets} follow immediately from Theorem~\ref{t:parametrizedapery}, and make use of the following observation from~\cite{shiftedaperysets}.  

\begin{prop}[{\cite[Proposition~3.4]{shiftedaperysets}}]\label{p:largeapery}
If $dn > F(S)$, then $\Ap(S;dn) = \{a_0, \ldots, a_{n-1}\}$, where
$$a_i = \left\{\begin{array}{ll}
di & \text{ if } di \in S; \\
di + dn & \text{ if } di \notin S.
\end{array}\right.$$
In particular, this holds whenever $n > W\!R^2$ as in Theorem~\ref{t:parametrizedapery}.  \end{prop}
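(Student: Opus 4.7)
The plan is to prove the equality by a cardinality comparison. By Remark~\ref{r:aperyfacts}(a), $|\Ap(S; dn)| = dn/d = n$ and the elements of $\Ap(S; dn)$ lie in pairwise distinct residue classes modulo $dn$. Since $\{a_0,\ldots,a_{n-1}\}$ already has exactly $n$ elements, it suffices to show each $a_i$ lies in $\Ap(S; dn)$ and that the $a_i$ are themselves pairwise distinct modulo $dn$.

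First I would check that every $a_i$ actually belongs to $S$. If $di \in S$, this is built into the definition of $a_i$. Otherwise, $a_i = di + dn$ is a nonnegative multiple of $d$ with $a_i \ge dn > F(S)$, so the definition of the Frobenius number as the largest multiple of $d$ outside $S$ forces $a_i \in S$. Distinctness modulo $dn$ follows from $a_i \equiv di \pmod{dn}$ together with the fact that $0, d, 2d, \ldots, (n-1)d$ are pairwise distinct residues modulo $dn$. Finally, I would verify $a_i - dn \in \ZZ \setminus S$ case-by-case: if $a_i = di$, then $a_i - dn = di - dn < 0$ and so lies outside $S$; and if $a_i = di + dn$, then $a_i - dn = di \notin S$ by hypothesis. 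Combining these, each $a_i \in \Ap(S; dn)$, and by the cardinality match equality follows.

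For the ``in particular'' clause, I would invoke a standard Frobenius bound: rescaling by $d$ turns $S$ into a numerical semigroup $S/d$ with generators $r_i/d \le R/d$, whose Frobenius number is at most $(R/d)^2$, so $F(S) = d \cdot F(S/d) \le R^2/d \le R^2$. Combined with $W \ge 1$, this yields $dn \ge n > WR^2 \ge R^2 \ge F(S)$. I do not anticipate a substantive obstacle: the argument is driven entirely by the defining properties of $\Ap(S;dn)$ and $F(S)$, with the cardinality formula from Remark~\ref{r:aperyfacts}(a) as the sole nontrivial input.
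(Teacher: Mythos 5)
Your argument is correct and is the natural one; the paper itself does not prove this proposition (it is cited from \cite{shiftedaperysets}), so there is no internal proof to compare against, but your reasoning is sound and matches what one would expect. Each of the three verification steps checks out: every $a_i$ lies in $S$ (either by hypothesis or because $a_i = di + dn \ge dn > F(S)$), $a_i - dn \notin S$ in both cases, and the $a_i$ represent $n$ distinct residue classes modulo $dn$, matching the cardinality $|\Ap(S;dn)| = dn/d = n$ from Remark~\ref{r:aperyfacts}(a). The only spot that deserves a bit more care is the Frobenius bound in the ``in particular'' clause: you invoke $F(S/d) \le (R/d)^2$ as a ``standard'' fact. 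This is true, but the two-generator formula $F(\<a,b\>) = ab - a - b$ only applies directly when $S/d$ has a coprime pair of generators, which is not guaranteed. A clean general justification is that each element of $\Ap(S/d; m)$, where $m$ is the smallest (rescaled) generator, is realized by a shortest path of at most $m-1$ steps in the Ap\'ery graph on $\ZZ/m\ZZ$, each step of weight at most the largest generator $M \le R/d$, so $\max\Ap(S/d;m) \le (m-1)M$ and hence $F(S/d) < mM \le (R/d)^2$. With that supplied, the chain $dn \ge n > W\!R^2 \ge R^2 > F(S)$ closes the argument.
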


\begin{cor}\label{c:frobquasi}
For $n > W\!R^2$, the function $n \to \mathsf F(P_n)$ has the form
$$\mathsf F(P_n) = \tfrac{w_k}{r_k}n^2 + a_1(n)n + a_0(n)$$
for some $r_k$-periodic functions $a_1(n)$ and $a_0(n)$.  
\end{cor}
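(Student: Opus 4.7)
The plan is to combine Selmer's formula with the explicit description of $\Ap(P_n;n)$ from Theorem~\ref{t:parametrizedapery} and the quasilinearity of $\mathsf m_{S,w}$ guaranteed by Theorem~\ref{t:maxminquasi}. By Selmer's formula (Remark~\ref{r:aperyfacts}(b)) together with Theorem~\ref{t:parametrizedapery}, for $n > W\!R^2$ we have
$$\mathsf F(P_n) \;=\; \max_{i \in \Ap(S;dn)}\bigl(i + \mathsf m_{S,w}(i)\,n\bigr) \;-\; n,$$
and Proposition~\ref{p:largeapery} describes $\Ap(S;dn)$ explicitly as $\{a_0,\ldots,a_{n-1}\}$, where $a_j = dj$ if $dj \in S$ and $a_j = dj + dn$ otherwise.

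The next step is to apply Theorem~\ref{t:maxminquasi}(b) to $S$ with its induced weight vector $(w_2,\ldots,w_k)$, which yields the splitting
$$\mathsf m_{S,w}(s) \;=\; \tfrac{w_k}{r_k}\,s + b(s)$$
for all sufficiently large $s \in S$, with $b$ an $r_k$-periodic function on $S$. Substituting this gives
$$i + \mathsf m_{S,w}(i)\,n \;=\; i\bigl(1 + \tfrac{w_k}{r_k}\,n\bigr) + b(i)\,n,$$
so the maximization over $i \in \Ap(S;dn)$ splits residue-by-residue modulo $r_k$: for each admissible residue $\rho$, one maximizes $i$ among those elements of $\Ap(S;dn)$ with $i \equiv \rho \pmod{r_k}$, and the $b$-term becomes the constant $b(\rho)$. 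For large $n$, the elements of $\Ap(S;dn)$ in a fixed residue class are immediate from Proposition~\ref{p:largeapery}, and the largest such element is itself quasilinear in $n$ of period $r_k$.

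Taking the maximum over $\rho$ expresses $\mathsf F(P_n) + n$ as the pointwise maximum of finitely many quadratic quasipolynomials in $n$ of period $r_k$ with a common leading coefficient $\tfrac{w_k}{r_k}$; since the pointwise maximum of such a finite collection is again a quasipolynomial of the same degree and period, the claimed form follows, with the dominant $n^2$-term arising from $i \cdot \tfrac{w_k}{r_k}\,n$ at the maximizing $i$ and all lower-order contributions (the $b$-corrections, the modular arithmetic of $dn \bmod r_k$, and Selmer's $-n$) collected into the $r_k$-periodic coefficients $a_1(n)$ and $a_0(n)$. The main obstacle I expect is the bookkeeping in this last step: one must verify that although the maximizing residue $\rho$ may depend on $n \bmod r_k$, the resulting piecewise expression is nevertheless a single quasipolynomial of period $r_k$, which requires carefully tracking how the two branches ``$a_j = dj$'' and ``$a_j = dj + dn$'' of Proposition~\ref{p:largeapery} supply maximizers across the residue classes.
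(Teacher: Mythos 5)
Your proposal is correct in its broad strokes and uses exactly the same ingredients as the paper --- Selmer's formula via Remark~\ref{r:aperyfacts}, the Ap\'ery set description in Theorem~\ref{t:parametrizedapery} and Proposition~\ref{p:largeapery}, and the quasilinearity of $\mathsf m_{S,w}$ from Theorem~\ref{t:maxminquasi}(b) --- but it is organized differently, and the difference is worth noting. The paper's proof fixes the single element $a \in \Ap(S;dn)$ maximizing $\mathsf m_{S,w}$, writes $\mathsf F(P_n) = a - n + \mathsf m_{S,w}(a)\,n$, and observes via Theorem~\ref{t:maxminquasi}(b) that $a + r_k$ plays the same role for $n + r_k$; the period and leading coefficient then drop out in one step. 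Your route instead expands $\mathsf m_{S,w}(s) = \tfrac{w_k}{r_k}s + b(s)$, stratifies the maximization by the residue of $i$ modulo $r_k$, and then invokes closure of degree-$2$ quasipolynomials of period $r_k$ with common leading coefficient under pointwise maximum. That closure fact is true, but it is not free: for fixed $n$ modulo $r_k$ the competitors are honest quadratics with equal $n^2$-coefficient, so the max eventually agrees with the one of largest linear (then constant) coefficient, and this stabilization happens for each residue class separately, so the max is a quasipolynomial only for $n$ large enough --- strictly speaking this gives "eventually" rather than for all $n > W\!R^2$ as stated. You flag this bookkeeping yourself, and it is precisely what the paper avoids by tracking the single maximizer $a$ rather than comparing one quadratic per residue class. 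Both approaches give the same leading coefficient $w_k/r_k$, and your residue-by-residue decomposition makes the source of that coefficient more transparent, at the cost of an extra max-closure argument.
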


\begin{proof}
Let $a$ denote the element of $\Ap(S;dn)$ for which $\mathsf m_{S,w}(-)$ is maximal.  Theorem~\ref{t:parametrizedapery} and Proposition~\ref{p:largeapery} imply
\begin{center}
$\begin{array}{r@{}c@{}l}
\mathsf F(P_n) &{}={}& \max(\Ap(P_n)) - n = a - n + \mathsf m_{S,w}(a) \cdot n,
\end{array}$
\end{center}
and Theorem~\ref{t:maxminquasi}\eqref{t:maxminquasi:minlen} implies $a + r_k$ is the element of $\Ap(S;dn + r_k)$ for which $\mathsf m_{S,w}(-)$ is maximal.  The quasilinearity of $\mathsf m_{S,w}(-)$ proves $n \mapsto \mathsf F(P_n)$ is quasiquadratic in $n$ with period $r_k$, and since the only degree-2 term in the above expression is $\mathsf m_{S,w}(a) \cdot n$, we obtain a leading coefficient identical to that of $\mathsf m_{S,w}(n)$, namely $w_k/r_k$.  
\end{proof}

\begin{cor}\label{c:genusquasi}
For $n > W\!R^2$, the function $n \mapsto \mathsf g(P_n)$ has the form
$$\mathsf g(P_n) = \tfrac{w_k}{2r_k}n^2 + b_1(n)n + b_0(n)$$
for some $r_k$-periodic functions $b_1(n)$ and $b_0(n)$.  
\end{cor}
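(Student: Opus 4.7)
The plan is to mirror the proof of Corollary~\ref{c:frobquasi}, now combining Selmer's formula
$$\mathsf g(P_n) = \sum_{t \in \Ap(P_n;n)} \lfloor t/n \rfloor$$
from Remark~\ref{r:aperyfacts}(b) with the description of $\Ap(P_n;n)$ from Theorem~\ref{t:parametrizedapery}. Substituting $t = i + \mathsf m_{S,w}(i) n$ for each $i \in \Ap(S;dn)$ and using that $\mathsf m_{S,w}(i)$ is a non-negative integer (so $\lfloor t/n\rfloor = \mathsf m_{S,w}(i) + \lfloor i/n\rfloor$) splits the Selmer sum as
$$\mathsf g(P_n) = \sum_{i \in \Ap(S;dn)} \mathsf m_{S,w}(i) \;+\; \sum_{i \in \Ap(S;dn)} \lfloor i/n\rfloor.$$

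I would then enumerate the Ap\'ery set via Proposition~\ref{p:largeapery} as $\Ap(S;dn) = \{a_0,\ldots,a_{n-1}\}$, where $a_j = dj$ for every $j$ with $dj \in S$ and $a_j = dj + dn$ on the finitely many exceptional indices. By Theorem~\ref{t:maxminquasi}\eqref{t:maxminquasi:minlen}, $\mathsf m_{S,w}$ is quasilinear with slope $w_k/r_k$ and period $r_k$, so the main sum reduces to summing a quasilinear function over an almost arithmetic progression. Applying the elementary identity $\sum_{j=0}^{n-1} j = \tfrac{n^2-n}{2}$ then yields quasiquadratic behavior in $n$ with leading coefficient $\tfrac{w_k}{2r_k}$, while the secondary floor sum is $O(n)$ and contributes only to the lower-order quasilinear and constant pieces. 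The finitely many exceptional indices each contribute a term of size $O(n)$ and thus also land in the lower-order pieces.

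Finally, $r_k$-periodicity of $b_1$ and $b_0$ would follow from the same shift argument used in Corollary~\ref{c:frobquasi}: replacing $n$ by $n + r_k$ shifts each $a_j$ by $d r_k$ without altering the exceptional index set (which is intrinsic to $S$), and the identity $\mathsf m_{S,w}(x + r_k) = \mathsf m_{S,w}(x) + w_k$ then produces a closed-form quasipolynomial relation between $\mathsf g(P_{n+r_k})$ and $\mathsf g(P_n)$ that pins down the period. The main obstacle is bookkeeping: the $r_k$-periodic remainder of $\mathsf m_{S,w}$, the exceptional-index contributions, and the floor sum each produce lower-order periodic terms that must be recombined carefully so that the final coefficients $b_1(n)$ and $b_0(n)$ have period exactly dividing $r_k$, without the interplay between the step $d$ and the period $r_k$ introducing a spuriously longer period.
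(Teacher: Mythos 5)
Your proposal is correct and follows essentially the same path as the paper: apply Selmer's formula, substitute the description of $\Ap(P_n;n)$ from Theorem~\ref{t:parametrizedapery}, split the floor using the integrality of $\mathsf m_{S,w}(i)$, enumerate $\Ap(S;dn)$ via Proposition~\ref{p:largeapery}, and invoke the quasilinearity of $\mathsf m_{S,w}$ from Theorem~\ref{t:maxminquasi}. The only cosmetic difference is how the leading coefficient is extracted: you appeal directly to $\sum_{j<n} j = \tfrac{n^2-n}{2}$, whereas the paper computes the successive $r_k$-differences $\sum_{j=0}^{r_k-1}\mathsf m_{S,w}(dn+dj)$ of the dominant term; these are equivalent. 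The ``bookkeeping obstacle'' you flag at the end is resolved in the paper simply by noting $d \mid r_k$, so the $d$-quasilinear floor sum and the finite set of exceptional indices all fold into $r_k$-periodic lower-order coefficients.
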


\begin{proof}
By Remark~\ref{r:aperyfacts}, we can write
$$\mathsf g(P_n) = \sum_{a \in \Ap(P_n)} \left\lfloor\frac{a}{n}\right\rfloor.$$
Theorem~\ref{t:parametrizedapery} and Proposition~\ref{p:largeapery} then yield
$$\begin{array}{r@{}c@{}l}
\mathsf g(P_n)
&{}={}& \displaystyle \sum_{i \in \Ap(S;dn)} \left\lfloor\frac{i+ \mathsf m_{S,w}(i)n}{n}\right\rfloor
= \sum_{i \in \Ap(S;dn)} \left\lfloor\frac{i}{n}\right\rfloor 
+ \sum_{i \in \Ap(S;dn)}  \mathsf m_{S,w}(i)
\\[0.1em]
&{}={}& \displaystyle \sum_{t=1}^{n-1} \left\lfloor \frac{dt}{n} \right\rfloor
+ d \cdot \mathsf g(S) 
+ \sum_{\substack{i < n \\ di \in S}} \mathsf m_{S,w}(di)
+ \sum_{\substack{i \ge 0 \\ di \notin S}} \mathsf m_{S,w}(di+dn).
\end{array}$$
Each of the terms is eventually quasipolynomial in $n$. The first term is $d$-quasilinear in $n$, the second term is independent of $n$, and Theorem~\ref{t:parametrizedapery} guarantees that the last two terms are eventually $r_k$-quasiquadratic and $r_k$-quasilinear in $n$, respectively.  Since $d \mid r_k$, we conclude $n \mapsto \mathsf g(P_n)$ is quasiquadratic in $n$ with period $r_k$.  As for the leading term, the only degree-2 term in the above expression has successive $r_k$-differences
$$\sum_{\substack{i < n + r_k \\ di \in S}} \mathsf m_{S,w}(di) - \sum_{\substack{i < n \\ di \in S}} \mathsf m_{S,w}(di) = \sum_{j = 0}^{r_k-1} \mathsf m_{S,w}(dn + dj)$$
which are linear with leading coefficient $r_k(w_k/r_k) = w_k$.  This yields a leading coefficient of $w_k/2r_k$ for $n \mapsto \mathsf g(P_n)$, as claimed.  
\end{proof}

\begin{remark}\label{r:irreducibleasymptotic}
A numerical semigroup $S$ is called \emph{irreducible} if it is maximal with respect to containment among all numerical semigroups with Frobenius number $\mathsf F(S)$.  If $\mathsf F(S)$ is odd, this happens precisely when $\mathsf g(S) = (\mathsf F(S) + 1)/2$, and if $\mathsf F(S)$ is even, this happens precisely when $\mathsf g(S) = (\mathsf F(S) + 2)/2$.  Irreducible numerical semigroups have the smallest possible genus for their respective Frobenius number \cite[Chapter~3]{numerical}.  

As a consequence of the leading coefficients in Corollaries~\ref{c:frobquasi} and~\ref{c:genusquasi}, we obtain
$$\lim_{n \to \infty} \frac{\mathsf g(P_n)}{\mathsf F(P_n)} = \frac{1}{2},$$
which can be interpreted as saying $P_n$ is ``nearly'' irreducible for large $n$.  
\end{remark}

As a consequence, we obtain that for sufficiently large $n$, the numerical semigroup~$P_n$ satisfies Wilf's conjecture~\cite{wilfconjecture}, which is a longstanding open problem for numerical semigroups; see~\cite{wilfsurvey} for a survey of recent progress.  

\begin{cor}\label{c:wilfshifted}
For $n > W\!R^2$, the Wilf number of $P_n$, defined in \cite{delgadoconj} as
$$\mathsf W(P_n) = k(F(P_n) - g(P_n)) - (F(P_n) + 1),$$
is $r_k$-quasiquadratic in $n$.  In~particular, $\mathsf W(P_n)$ is positive for all suffiently large $n$, and thus $P_n$ satisfies Wilf's conjecture for each such $n$.  
\end{cor}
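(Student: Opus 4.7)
The plan is to perform a direct substitution, using Corollaries~\ref{c:frobquasi} and~\ref{c:genusquasi} to convert the Wilf number into an explicit $r_k$-quasiquadratic expression, and then read off its leading coefficient. Concretely, from Corollary~\ref{c:frobquasi} we write $\mathsf F(P_n) = \tfrac{w_k}{r_k}n^2 + a_1(n)n + a_0(n)$ and from Corollary~\ref{c:genusquasi} we write $\mathsf g(P_n) = \tfrac{w_k}{2r_k}n^2 + b_1(n)n + b_0(n)$, where each of $a_1, a_0, b_1, b_0$ is $r_k$-periodic. Taking the difference, the $n^2$ terms combine to give
$$\mathsf F(P_n) - \mathsf g(P_n) = \tfrac{w_k}{2r_k}n^2 + \bigl(a_1(n) - b_1(n)\bigr)n + \bigl(a_0(n) - b_0(n)\bigr),$$
which is itself $r_k$-quasiquadratic.

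Next, I plug this into $\mathsf W(P_n) = k(\mathsf F(P_n) - \mathsf g(P_n)) - (\mathsf F(P_n) + 1)$ and collect terms of each degree in $n$. The quadratic coefficient becomes $\tfrac{k w_k}{2r_k} - \tfrac{w_k}{r_k} = \tfrac{(k-2)w_k}{2r_k}$, while the linear and constant coefficients are $(k-1)a_1(n) - k b_1(n)$ and $(k-1)a_0(n) - k b_0(n) - 1$ respectively, both $r_k$-periodic in $n$. This immediately yields the claimed $r_k$-quasiquadratic form
$$\mathsf W(P_n) = \tfrac{(k-2)w_k}{2r_k}n^2 + \bigl((k-1)a_1(n) - k b_1(n)\bigr)n + \bigl((k-1)a_0(n) - k b_0(n) - 1\bigr).$$

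For the positivity assertion, when $k \ge 3$ the leading coefficient $\tfrac{(k-2)w_k}{2r_k}$ is strictly positive (since $w_k, r_k \ge 1$), hence $\mathsf W(P_n) > 0$ for all $n$ beyond some threshold depending only on the periodic data, proving Wilf's conjecture for $P_n$ in that range. The case $k = 2$ need not be separately argued, as two-generator numerical semigroups are complete intersections and Wilf's conjecture for them is classical.

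There is no serious obstacle here; the entire argument is mechanical given Corollaries~\ref{c:frobquasi} and~\ref{c:genusquasi}. The only point requiring a moment of care is confirming that the leading $n^2$ coefficients of $k \cdot \mathsf g(P_n)$ and of $\mathsf F(P_n)$ do not conspire to cancel the leading $n^2$ coefficient of $k \cdot \mathsf F(P_n)$ when $k \ge 3$; the explicit computation $\tfrac{kw_k}{2r_k} - \tfrac{w_k}{r_k} = \tfrac{(k-2)w_k}{2r_k}$ shows they do not, and the quasiquadratic growth dominates any bounded periodic error from the lower-order coefficients.
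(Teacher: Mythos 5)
Your proof is correct and follows the same approach as the paper's, which compresses the argument to a single line (apply Corollaries~\ref{c:frobquasi} and~\ref{c:genusquasi}). Your explicit leading-coefficient computation $\tfrac{(k-2)w_k}{2r_k}$ surfaces a hypothesis the paper leaves tacit: positivity of $\mathsf W(P_n)$ for large $n$ requires $k\ge 3$. One small correction to your handling of $k=2$: appealing to the classical verification of Wilf's conjecture for two-generator semigroups salvages the final clause of the corollary, but not the intermediate positivity claim itself. A two-generator numerical semigroup is symmetric, so $\mathsf g = (\mathsf F+1)/2$, and the paper's formula gives $\mathsf W = 2(\mathsf F - \mathsf g) - (\mathsf F+1) = -2$ identically; the assertion $\mathsf W(P_n) > 0$ therefore genuinely needs $k \ge 3$, a restriction the corollary should arguably have stated explicitly.
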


\begin{proof}
Apply Corollaries~\ref{c:frobquasi} and~\ref{c:genusquasi}.  
\end{proof}

Our final result concerns the (Cohen-Macaulay) type of $P_n$ for large $n$, which, just as in~\cite{shiftedaperysets}, we obtain from the pseudo-Frobenius numbers of $P_n$.  

\begin{defn}\label{d:pseudofrob}
An integer $m \ge 0$ is a \emph{pseudo-Frobenius number} of a numerical semigroup $T$ if $m \notin T$ but $m + n \in T$ for all positive $n \in T$.  Denote the set of pseudo-Frobenius numbers of $T$ by $\mathsf{PF}(T)$, and the \emph{type} of $T$ by $\mathsf t(T) = |\mathsf{PF}(T)|$.  
\end{defn}

\begin{cor}\label{c:pseudofrobshifted}
Given $n \in \ZZ_{\ge 0}$, let $F_n$ denote the set
$$F_n = \{i \in \Ap(S;dn) : a \equiv i \bmod n \text{ for some } a \in \mathsf{PF}(P_n)\}.$$
For $n > W\!R^2$, the map $F_n \to F_{n + r_k}$ given by
$$\begin{array}{rcl}
i
&\mapsto&
\left\{\begin{array}{ll}
i & \text{if } i \le dn \\
i + r_k & \text{if } i > dn
\end{array}\right.
\end{array}
$$
is a bijection.  In particular, there is a bijection $\mathsf{PF}(P_n) \to \mathsf{PF}(P_{n + r_k})$, meaning the function $n \mapsto t(P_n)$ is $r_k$-periodic for $n > W\!R^2$.  
\end{cor}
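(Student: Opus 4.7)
Proof proposal: The plan is to use Theorem~\ref{t:parametrizedapery} to parametrize $\Ap(P_n; n)$ by $\Ap(S; dn)$ (with $\tilde{a}_i := i + \mathsf{m}_{S,w}(i) n$), identify $F_n$ with those indices $i$ for which $\tilde{a}_i$ is maximal under $\le_{P_n}$, and then transfer maximality from $n$ to $n + r_k$ using the $r_k$-quasilinearity $\mathsf{m}_{S,w}(x + r_k) = \mathsf{m}_{S,w}(x) + w_k$ from Theorem~\ref{t:maxminquasi}(b).

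First, the standard identification $\mathsf{PF}(P_n) + n = \operatorname{Max}_{\le_{P_n}}\Ap(P_n; n)$ combined with Theorem~\ref{t:parametrizedapery} realizes $F_n$ as the set of indices $i \in \Ap(S; dn)$ such that $\tilde{a}_i$ is maximal in $\le_{P_n}$.  Using Proposition~\ref{p:largeapery}, I would verify that $\phi$ sends $\Ap(S; dn)$ into $\Ap(S; d(n + r_k))$: for $i \le dn$ this is immediate since $i - d(n + r_k) < 0$; for $i > dn$ one uses the Type~2 description $i = dj + dn$ with $dj \notin S$ to show $\phi(i) = i + r_k$ lies in $S$ while $\phi(i) - d(n+r_k) \notin S$.

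The crux is to transfer maximality, i.e.\ $i \in F_n \iff \phi(i) \in F_{n+r_k}$.  Maximality of $\tilde{a}_i(n)$ unwinds to the condition that, for each $l \ge 2$, the element $(i + r_l) + (\mathsf{m}_{S,w}(i) + w_l - 1) n$ lies in $P_n$; using the description $P_n = \{\mu n + s : s \in S,\ \mu \ge \mathsf{m}_{S,w}(s)\}$, this amounts to the existence of a suitable pair $(\mu, s)$.  Since $n > WR^2$, the relevant inputs $x$ (for instance $x = i$ in the Type~2 case, where $i > dn > R^2$) are large enough for Theorem~\ref{t:maxminquasi}(b) to apply, and the $w_k$-shift identity lets one transport valid $(\mu, s)$-witnesses between the $P_n$-condition and the $P_{n + r_k}$-condition in both the Type~1 case (where $\phi$ is the identity on $i$, so the correspondence comes from matching $\mu$-witnesses shifted by one) and the Type~2 case (where $\phi(i) = i + r_k$ absorbs one copy of the $w_k$-increment).

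Injectivity of $\phi$ on $F_n$ is immediate from the piecewise form: if $\phi(i_1) = \phi(i_2)$, a case analysis by types rules out the mixed case once $n > r_k/d$.  For surjectivity, given $i'' \in F_{n+r_k}$: when $i'' \le dn$ the preimage is $i''$ itself (it lies in $\Ap(S; dn)$ as Type~1, and the reverse direction of the equivalence places it in $F_n$), and when $i'' > dn + r_k$ the preimage is $i'' - r_k$, which lies in $\Ap(S; dn)$ as a Type~2 element by a symmetric verification.  The main obstacle is the intermediate range $dn < i'' \le dn + r_k$, consisting of ``new'' Apéry elements: for each such $i''$, I would produce a strictly larger element of $\Ap(P_{n+r_k}; n+r_k)$ to contradict maximality.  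The natural witness is $\tilde{a}_{i'' + r_k}(n+r_k)$, since by Theorem~\ref{t:maxminquasi}(b) the difference $\tilde{a}_{i'' + r_k}(n+r_k) - \tilde{a}_{i''}(n+r_k)$ equals exactly the generator $w_k(n + r_k) + r_k$ of $P_{n+r_k}$; one handles the case $i'' + r_k \in \Ap(S; d(n+r_k))$ directly, and the subcases where $i'' + r_k$ falls out of the Apéry set by invoking analogous shifts by other $r_l$'s, again leveraging the quasilinearity of $\mathsf{m}_{S,w}$ to produce a strictly larger Apéry element.  Having established the bijection $F_n \to F_{n+r_k}$, the claimed bijection $\mathsf{PF}(P_n) \to \mathsf{PF}(P_{n+r_k})$ follows from the residue parametrization, and the $r_k$-periodicity of $n \mapsto t(P_n)$ is an immediate consequence.
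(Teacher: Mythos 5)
Your overall plan — parametrize $\Ap(P_n;n)$ via Theorem~\ref{t:parametrizedapery}, identify $F_n$ with the set of $i$ whose Ap\'ery element $\tilde a_i$ is $\le_{P_n}$-maximal, and transport maximality via the quasilinearity $\mathsf m_{S,w}(x+r_k)=\mathsf m_{S,w}(x)+w_k$ from Theorem~\ref{t:maxminquasi}(b) — is exactly the strategy the paper intends (the paper's proof is a one-line citation to~\cite[Theorem~4.8]{shiftedaperysets}, which follows the same template). But the surjectivity argument as you've written it has a real hole, concentrated in the ``new'' Ap\'ery elements of $\Ap(S;d(n+r_k))$ that $\phi$ does not hit.

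First, your description of the new elements is off. By Proposition~\ref{p:largeapery} (with $n>WR^2>\mathsf F(S)$) the $r_k$ elements of $\Ap(S;d(n+r_k))$ not in $\phi(\Ap(S;dn))$ are the Type-1 elements $\{di : n\le i<n+r_k\}$, which lie in $[dn,\,d(n+r_k))$, not in your interval $(dn,\,dn+r_k]$. For $d>1$ these two intervals genuinely disagree (e.g.\ $d(n+r_k)-d>dn+r_k$ whenever $r_k(d-1)>d$), so you are silently applying the ``preimage is $i''-r_k$'' case to new elements for which $i''-r_k\notin\Ap(S;dn)$. Even for $d=1$, the endpoint $i''=dn$ is a new Ap\'ery element that your range excludes, yet your ``$i''\le dn$'' clause would assign it the nonexistent preimage $dn\notin\Ap(S;dn)$; you would still need to argue $dn\notin F_{n+r_k}$.

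Second, and more seriously, the mechanism you propose for showing new elements are not maximal does not go through when $i''+r_k\notin\Ap(S;d(n+r_k))$. Your fallback is to ``invoke analogous shifts by other $r_l$'s, again leveraging the quasilinearity of $\mathsf m_{S,w}$.'' But Theorem~\ref{t:maxminquasi}(b) only gives $\mathsf m_{S,w}(x+r_k)=\mathsf m_{S,w}(x)+w_k$ for the $\le_w$-largest generator $r_k$; for $l<k$ one only has the inequality $\mathsf m_{S,w}(x+r_l)\le\mathsf m_{S,w}(x)+w_l$, and when it is strict the difference $\tilde a_{i''+r_l}(n+r_k)-\tilde a_{i''}(n+r_k)$ is \emph{not} the generator $w_l(n+r_k)+r_l$ (indeed, by the last statement of Theorem~\ref{t:parametrizedapery}, $\tilde a_{i''}(n+r_k)+g_l$ can lie in $\Ap(P_{n+r_k};n+r_k)$ only when equality holds). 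So the ``shift by $r_l$'' does not manufacture a strictly larger Ap\'ery element in the cases where you need it. Closing this requires an actual argument — for instance, analyzing directly which $j\in\Ap(S;d(n+r_k))$ satisfy $j-i''\in S$ and $\mathsf m_{S,w}(j)=\mathsf m_{S,w}(i'')+\mathsf m_{S,w}(j-i'')$ — and is precisely the content that \cite[Theorem~4.8]{shiftedaperysets} supplies and your sketch does not.

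The injectivity case split, the use of Proposition~\ref{p:largeapery}, and the transport of maximality in the Type-2 case are all sound; the gap is localized to the treatment of the $r_k$ new Ap\'ery elements on the surjectivity side.
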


\begin{proof}
The proof is identical to that of \cite[Theorem~4.8]{shiftedaperysets}.  
\end{proof}



\section{Evidence of Conjecture~\ref{conj:main}}
\label{sec:evidence}

Now that we have seen the formal definition of a minimal presentation, we are ready to see an example of Conjecture~\ref{conj:main} for a more general (i.e., nonlinear) parametrized semigroup family.  Note that computational evidence for nonlinear families is harder to obtain since the substantially larger generators result in computations taking much longer to complete.  

\begin{example}\label{e:nonlinear}
Consider the parametrized family of semigroups 
$$P_n = \<m^2, m^2 + m + 1, m^2 + 2m + 1, m^2 + 2m + 3\>$$
and the following minimal presentations.  
\smaller
$$
\begin{array}{
@{}r@{\,\,\,}
l@{}r@{\,\,}r@{\,\,}r@{\,\,}r@{\,}r@{\,\,}r@{\,\,}r@{\,}r@{}l@{\,\,}
l@{}r@{\,\,}r@{\,\,}r@{\,\,}r@{\,}r@{\,\,}r@{\,\,}r@{\,}r@{}l@{\,\,}
l@{}r@{\,\,}r@{\,\,}r@{\,\,}r@{\,}r@{\,\,}r@{\,\,}r@{\,}r@{}l@{\,\,}
}
P_{52} : 
& (( & 0, & 0, & 27, & 0), & (0, & 1, & 0, & 26 & )),  
& (( & 0, & 3, & 26, & 0), & (2, & 0, & 0, & 27 & )), 
& (( & 0, & 4, & 0, & 0), & (2, & 0, & 1, & 1 & )), 
\\
& (( & 25, & 2, & 14, & 0), & (0, & 0, & 0, & 40 & )), 
& (( & 25, & 3, & 0, & 0), & (0, & 0, & 13, & 14 & )), 
& (( & 27, & 0, & 0, & 0), & (0, & 1, & 12, & 13 & ))
\\[0.5em]

P_{56} : 
& (( & 0, & 0, & 29, & 0), & (0, & 1, & 0, & 28 & )),  
& (( & 0, & 3, & 28, & 0), & (2, & 0, & 0, & 29 & )), 
& (( & 0, & 4, & 0, & 0), & (2, & 0, & 1, & 1 & )), 
\\
& (( & 27, & 2, & 15, & 0), & (0, & 0, & 0, & 43 & )), 
& (( & 27, & 3, & 0, & 0), & (0, & 0, & 14, & 15 & )), 
& (( & 29, & 0, & 0, & 0), & (0, & 1, & 13, & 14 & ))
\\[0.5em]

P_{60} : 
& (( & 0, & 0, & 31, & 0), & (0, & 1, & 0, & 30 & )),  
& (( & 0, & 3, & 30, & 0), & (2, & 0, & 0, & 31 & )), 
& (( & 0, & 4, & 0, & 0), & (2, & 0, & 1, & 1 & )), 
\\
& (( & 29, & 2, & 16, & 0), & (0, & 0, & 0, & 46 & )), 
& (( & 29, & 3, & 0, & 0), & (0, & 0, & 15, & 16 & )), 
& (( & 31, & 0, & 0, & 0), & (0, & 1, & 14, & 15 & ))
\\[0.5em]

P_{64} : 
& (( & 0, & 0, & 33, & 0), & (0, & 1, & 0, & 32 & )),  
& (( & 0, & 3, & 32, & 0), & (2, & 0, & 0, & 33 & )), 
& (( & 0, & 4, & 0, & 0), & (2, & 0, & 1, & 1 & )), 
\\
& (( & 31, & 2, & 17, & 0), & (0, & 0, & 0, & 49 & )), 
& (( & 31, & 3, & 0, & 0), & (0, & 0, & 16, & 17 & )), 
& (( & 33, & 0, & 0, & 0), & (0, & 1, & 15, & 16 & ))
\end{array}
$$
\normalsize
Unlike linear parametrized families, successive minimal presentations have more than just 2 coordinates consistently increasing, though the pattern in the relations is clear.  
\end{example}


\end{document}